\newtheorem{rem}{\hspace{1mm}Remark}[section]
\begin{document}
 
 
\title{Mathematical and numerical analysis of a nonlocal Drude model in nanoplasmonics}
\author{ Chupeng Ma\thanks{{Department of Applied Mathematics,
The Hong Kong Polytechnic University, Kowloon, Hong Kong, China; ({\tt machupeng@lsec.cc.ac.cn}).}}
\and Yongwei Zhang\thanks{{Faculty of Mathematics and Statistics, Zhengzhou University, Zhengzhou, China; ({\tt zhangyongwei@lsec.cc.ac.cn}).}}
\and Jun zou\thanks{{Department of Mathematics, The Chinese University of Hong Kong, Shatin, Hong Kong, China; ({\tt zou@math.cuhk.edu.hk}).}}}
 
\maketitle
\begin{abstract}
In this paper, we consider the frequency-domain Maxwell's equations coupled to a nonlocal Drude model which describes the nonlocal optical response in metallic nanostructures. We prove the existence and uniqueness of weak solutions to the coupled equations. A Galerkin finite element method based on the Raviart--Thomas and N\'{e}d\'{e}lec elements is proposed to solve the equations and the associated error estimates are given. This is the first work on the mathematical and numerical analysis of this model. Numerical examples are presented to verify our theoretical analysis.
\end{abstract}

\begin{keywords}
hydrodynamic Drude model, nonlocal effects, well-posedness, finite element method, error estimates.
\end{keywords}

\begin{AMS}
65N30, 65N55, 65F10, 65Y05
\end{AMS}

\section{Introduction}\label{sec-1}
Nanoplasmonics is an active research field concerned with the study of optical properties of metallic nanostructures \cite{Duan,Stock}. The interaction of metallic nanostrucutes with light at optical frequencies produces the excitation of (localized) surface plasmons, i.e., the collective oscillation of conduction band electrons at the metal surface, leading to many unusual and fascinating properties. It enables the confinement of light at the nanoscale vicinity of metal surfaces, the enormous local fields enhancement of the incident wave, and the squeezing of light beyond the diffraction limit, providing unparalleled means for manipulation of light at the nanoscale. As a result, nanoplasmonics has found applications in different fields, such as near-field scanning microscopy \cite{Lucas}, ultrasensitive sensing and detection \cite{Stewart}, and plasmonic waveguilding \cite{Oulton}. To be able to understand and make use of nanoplasmonic phenomena, an appropriate modeling for describing the optical response of metallic nanostructures is required. 

Historically, the interaction of light with metals has been described by the classical theory of light-matter interaction in which light and matter (the free electrons) are described by Maxwell's equations and Newtonian mechanics, respectively. The most widely used model to describe the optical response of metals is Drude's model \cite{Drude}. In this model, the collective oscillation of conduction electrons in metals subject to driving optical fields is analysed within the framework of local-response approximation (LRA), where the material response occurs only in the point of space of the perturbation and there is no response at even short distances. Drude's model has achieved a great success in the modeling of bulk metals. However, as the size of metallic structures shrinks down to nanometer scales, nonlocal interaction effects between electrons become predominant and Drude's model is inadequate to explain experimentally observable phenomena.

In view of the limitation of Drude's model, the nonlocal response theories for metallic nanostructures have gained considerable interest and attention in the past decade and some improved models have been proposed, including the hydrodynamic model \cite{Ginzburg}, the nonlocal hydrodynamic Drude (NHD) model \cite{Raza}, and the generalized nonlocal optical response (GNOR) model \cite{Mort}. In the hydrodynamic model, the free electrons in metals are modeled as a charged fluid and described by hydrodynamic equations of Euler-type. The NHD and GNOR models are derived from the hydrodynamic model by the linear-response approximation. All these models are coupled to Maxwell's equations (in both frequency domain and time domain) and form coupled systems of PDEs. 

Due to a relatively simple form and the successful interpretation of observable nonlocal effects \cite{Cira}, the NHD model has drew much attention in recent years and become a popular model in the study of optical properties of metallic nanostructures. Meanwhile, numerical methods for solving Maxwell's equations coupled to the NHD model have been extensively studied. In \cite{Hire}, the frequency-domain NHD model (frequency-domain Maxwell's equations coupled to the NHD model) is solved for modeling nano-plasmonic structures with complex geometries by using the N\'{e}d\'{e}lec elements based finite element method. In \cite{zheng}, a computational scheme based on the boundary integral equation and method of moments is developed for the frequency-domain NHD model to predict the interaction of light with metallic nanoparticles. The discontinuous Galerkin methods for the time-domain and frequency-domain NHD models have been considered in \cite{Li,Schmitt,Vidal}. For other more numerical methods for this system of PDEs, we refer the reader to \cite{Eremin,Huang,Schmitt-2,Tru} and references therein. 

However, up to now, most existing studies on the NHD model focus on the development of numerical methods or the analysis of physical effects. The theoretical and numerical analysis of this model available in the literature is very limited. In \cite{Huang}, the well-posedness and the stability and convergence of numerical methods are proved for the modified time-domain NHD model. To the best of our knowledge, there seems to be no results on the mathematical and numerical analysis of the frequency-domain NHD model. In fact, just as it is for the frequency-domain and time-domain Maxwell's equations, the proof of the well-posedness and numerical convergence for the frequency-domain NHD model is much difficult than its time-domain counterpart.

In this paper, we present a rigorous mathematical and numerical analysis of the frequency-domain NHD model for the first time. The existence and uniqueness of weak solutions to the equations are proved. A finite element method based on the Raviart--Thomas and N\'{e}d\'{e}lec elements is developed for the equations and the convergence is proved. There are two aspects of this model that make the analysis challenging. First, the curl and div operators both have a large null space in the continuous and discrete levels, which must be removed from the function spaces by using the (discrete) Hemtholtz decompositions. To this end, an understanding of some properties of the continuous function spaces and the finite element spaces is required. Second, the bilinear forms in the weak formulation of the equations are not coercive, which brings difficulty to the analysis in both continuous and discrete levels. To overcome this problem, in the proof of the well-posedness, we first show the uniqueness of weak solutions and then apply the Fredholm alternative to prove the existence of weak solutions, while in the proof of the convergence of the finite element discretization, the theory of convergence of collectively compact operators developed in \cite{Kress,Monk} is used. Although our methods are somewhat similar to those used in \cite{Monk} for the analysis of frequency-domain Maxwell's equations, the proof presented in this paper is much more delicate due to the coupled system nature.

The rest of this paper is organized as follows. In section 2, we give a brief derivation of the NHD model and describe the problem considered in this paper. In section 3, we prove the existence and uniqueness of weak solutions to the equations. In section 4, we propose a finite element discretization for the equations and prove the convergence of the scheme. In section 5, we give some numerical examples to confirm our theoretical analysis.

\section{Nonlocal hydrodynamic Drude model}\label{sec-2}
In this section, we briefly introduce the NHD model and give the problem considered in this paper.

In the absence of external charge and current, macroscopic Maxwell's equations for metals can be written as 
 \begin{equation}\label{eq:1-1}
\begin{array}{@{}l@{}}
{\displaystyle  \nabla\times{\bf E} = -\partial_{t}{\bf B}, \qquad \nabla \cdot {\bf B} = 0,}\\[2mm]
 {\displaystyle \nabla\times{\bf H} = \partial_{t}{\bf D} +{\bf J}, \quad \nabla \cdot {\bf D} = \rho.} 
\end{array}
\end{equation}
The equations link four macroscopic fields ${\bf E}$ (the electric field), ${\bf H}$ (the magnetic field), ${\bf D}$ (the dielectric displacement), and ${\bf B}$ (the magnetic flux density) with the free charge and current densities $\rho$ and ${\bf J}$. Maxwell's equations (\ref{eq:1-1}) are supplemented by the constitutive laws which link ${\bf B}$ to ${\bf H}$ and ${\bf D}$ to ${\bf E}$ via
\begin{equation}\label{eq:1-2}
{\bf B} = \mu \mathbf{H},\quad {\bf D} = \epsilon_{0}\epsilon_{\infty} {\bf E}.
\end{equation}
Here $\mu$ is the magnetic permeability of metals and $\epsilon_{0}\epsilon_{\infty}$ is the electric permittivity of metals that takes into account the polarization of bound electrons (${\epsilon_{0}}$ is the electric permittivity of vacuum). 

We derive the NHD model starting from the hydrodynamic model within which the free electrons are modeled as a charged fluid with the Euler equations:
 \begin{equation}\label{eq:1-3}
\left\{
\begin{array}{@{}l@{}}
{\displaystyle  \partial_{t}n+\nabla\cdot(n{\bf v})= 0 ,}\\[2mm]
 {\displaystyle m_{e}(\partial_{t} + {\bf v}\cdot \nabla +\gamma){\bf v} = -e({\bf E} + {\bf v}\times {\bf B}) - \frac{\nabla p}{n},}
\end{array}
\right.
\end{equation}
where $e$ is the electron charge, $m_{e}$ is the effective electron mass, $n$ is the electron density, ${\bf v}$ is the hydrodynamic velocity, $p$ is the electron pressure, and $\gamma>0$ is the damping constant. The term $-e({\bf E} + {\bf v}\times {\bf B})$ represents the Lorentz force. The polarization charge and current densities $\rho$ and ${\bf J}$ of the free electrons are given as
\begin{equation}\label{eq:1-4}
\rho = -en, \quad {\bf J} = -en{\bf v}.
\end{equation}

The equations (\ref{eq:1-1})-(\ref{eq:1-4}) form the self-consistent Euler--Maxwell coupled equations. In order to simplify the above equations, we linearize the equations (\ref{eq:1-3}) as in perturbation theory by expanding the physical fields in a non-oscillating term (e.g. the constant equilibrium electron density $n_{0}$) and a small first-order dynamic term. In this spirit, we can write the perturbation expansions for $n({\bf x},t)$ and ${\bf v}({\bf x},t)$
\begin{equation}\label{eq:1-4-0}
n({\bf x},t)\approx n_{0} + n_{1}({\bf x},t), \quad {\bf v}({\bf x},t) \approx {\bf v}_{0} + {\bf v}_{1}({\bf x},t).
\end{equation}
Similar expansions can be written for the electric and magnetic fields. Since in the absence of an external field ${\bf v} = {\bf v}_{0} = {\bf 0}$, the nonlinear terms ${\bf v}\cdot\nabla {\bf v}$ and ${\bf v}\times {\bf B}$ vanish due to the linearization. By using the Thomas-Fermi model for the pressure term in (\ref{eq:1-3}), we can linearize it as 
\begin{equation}\label{eq:1-4-1}
\frac{\nabla p}{n} \approx m_{e}\beta^{2}\frac{\nabla n}{n_{0}},
\end{equation}
where $\beta$ is an important parameter representing the nonlocality related to the Fermi velocity \cite{Boardman}. Using the assumptions above, we get the linearized hydrodynamic equation
 \begin{equation}\label{eq:1-4-2}
\partial_{t}{\bf v}  = \frac{-e}{m_{e}}{\bf E}  - \gamma {\bf v} - \beta^{2}\frac{\nabla n}{n_{0}}
\end{equation}
and the linearized continuity equation
\begin{equation}\label{eq:1-5}
\partial_{t}n+n_{0}\nabla\cdot {\bf v}= 0.
\end{equation}
Differentiating (\ref{eq:1-4-2}) with respect to time $t$, inserting the linearized current density ${\bf J} \approx -en_{0}{\bf v}$ and using (\ref{eq:1-5}), we obtain
\begin{equation}\label{eq:1-6}
\partial_{tt}{\bf J} + \gamma\partial_{t}{\bf J} - \beta^{2}\nabla(\nabla \cdot {\bf J}) - \omega_{p}^{2} \varepsilon_{0} \partial_{t} {\bf E} = 0,
\end{equation}
where $\omega_{p} = \sqrt{n_{0}e^{2}/(m_{e}\varepsilon_{0})}$ is the plasma frequency. Combining (\ref{eq:1-1}), (\ref{eq:1-2}), and (\ref{eq:1-6}), we have Maxwell's equations with the NHD model for metals
 \begin{equation}\label{eq:1-7}
\left\{
\begin{array}{@{}l@{}}
{\displaystyle  \nabla\times{\bf E} = -\mu\partial_{t}{\bf H} ,}\\[2mm]
 {\displaystyle \nabla\times{\bf H} = \varepsilon_{0}\varepsilon_{\infty}\partial_{t}{\bf E} +{\bf J} ,} \\[2mm]
 {\displaystyle \partial_{tt}{\bf J} + \gamma\partial_{t}{\bf J} - \beta^{2}\nabla(\nabla \cdot {\bf J}) - \omega_{p}^{2} \varepsilon_{0} \partial_{t} {\bf E} = 0.}
\end{array}
\right.
\end{equation}
Replacing $\partial_{t}$ with $-{\rm i}\omega$ in (\ref{eq:1-7}) by Fourier transformation in the time domain, where ${\rm i}$ is the imaginary unit and $\omega$ is the angular frequency, and eliminating the magnetic field ${\bf H}$, we get Maxwell's equations with the NHD model in frequency domain
 \begin{equation}\label{eq:1-8}
\left\{
\begin{array}{@{}l@{}}
{\displaystyle  \nabla \times (\mu^{-1}\nabla\times {\bf E}) - \varepsilon_{0}\varepsilon_{\infty}\omega^{2}{\bf E} = {\rm i}\omega {\bf J} ,}\\[2mm]
 {\displaystyle \omega(\omega+{\rm i}\gamma){\bf J} + \beta^{2} \nabla(\nabla\cdot {\bf J}) = {\rm i}\omega \omega^{2}_{p}\varepsilon_{0}{\bf E}.}
\end{array}
\right.
\end{equation}
\begin{rem}
By Fourier transformation in the space domain, we replace $\nabla$ with $-{\rm i}{\bf k}$ in the second equation of (\ref{eq:1-8}), which gives
\begin{equation}
\omega(\omega+{\rm i}\gamma){\bf J} - \beta^{2} {\bf k}^{2} {\bf J}= {\rm i}\omega \omega^{2}_{p}\varepsilon_{0}{\bf E},
\end{equation}
and then we obtain the spatially-dispersive (relative) permittivity for the metal
\begin{equation}
\epsilon(\omega,{\bf k}) = \epsilon_{\infty}- \frac{\omega_{p}^{2}}{\omega(\omega+{\rm i}\gamma)-\beta^{2}{\bf k}^{2}}.
\end{equation}
The parameter $\beta$ represents the level of nonlocality. As $\beta\rightarrow 0$, we recover the classical Drude permittivity $\epsilon(\omega) = \epsilon_{\infty}- {\omega_{p}^{2}}/{(\omega(\omega+{\rm i}\gamma))}$.
\end{rem}

In this paper we consider the following equations
 \begin{equation}\label{eq:1-9}
\left\{
\begin{array}{@{}l@{}}
{\displaystyle  \nabla \times (\mu^{-1}\nabla\times {\bf E}) - \varepsilon \omega^{2} {\bf E} = {\rm i}\omega {\bf J}, \quad {\rm in} \;\; \Omega,}\\[2mm]
 {\displaystyle \omega(\omega+{\rm i}\gamma){\bf J} + \beta^{2} \nabla(\nabla\cdot {\bf J}) = {\rm i}\omega \omega^{2}_{p}\varepsilon_{0}{\bf E}, \quad {\rm in}\;\; \Omega_{s}}
\end{array}
\right.
\end{equation}
with the boundary conditions
 \begin{equation}\label{eq:1-10}
\left\{
\begin{array}{@{}l@{}}
{\displaystyle (\mu^{-1}\nabla\times {\bf E})\times {\bf n} - {\rm i}\omega({\bf n}\times{\bf E})\times{\bf n} = {\bf g},\quad {\rm on}\;\; \partial \Omega,}\\[2mm]
 {\displaystyle {\bf n}\cdot {\bf J} = 0,\quad  {\rm on}\;\; \partial \Omega_{s}.}
\end{array}
\right.
\end{equation}
Here $\Omega$ and $\Omega_{s}$ are the bounded, simply-connected, Lipschitz polyhedron domains in $\mathbb{R}^{3}$ with $\bar{\Omega}_{s}\subset \Omega$. $\Omega$ and $\Omega_{s}$ are shown in Fig~2.1. The magnetic permeability $\mu$ and electric permittivity $\varepsilon$ are two piecewise constant functions in the domain $\Omega$, namely
\begin{equation}\label{eq:1-11}
\mu=\left\{
\begin{array}{l}
 \mu_{1}, \,  \quad \hbox{in $\Omega_{s}$,}\\
 \mu_{2}, \,  \quad \hbox{in $\Omega/\Omega_{s}$,}
\end{array}
\right.
\quad\quad  \varepsilon=\left\{
\begin{array}{l}
 \varepsilon_{1}, \,  \quad \hbox{in $\Omega_{s}$,}\\
 \varepsilon_{2}, \,  \quad \hbox{in $\Omega/\Omega_{s}$,}
\end{array}
\right.
\end{equation}
and $\mu_{i}, \varepsilon_{i}\; (i=1,2)$ are positive constants. 
\begin{rem}
The hard-wall boundary conditions for the current density ${\bf J}$ means that the electrons are confined within the metal and spill-out of electrons in free space is neglected. For Maxwell's equations, we apply the first-order Silver--M\"{u}ller boundary conditions \cite{Stupfel}
\begin{equation}\label{eq:1-12}
{\bf n}\times {\bf E} - {\bf H} = {\bf n}\times {\bf E}^{inc} - {\bf H}^{inc}, \quad {\rm on}\,\, \partial \Omega,
\end{equation}
where ${\bf E}^{inc}$ and ${\bf H}^{inc}$ represent the electromagnetic fields of the incoming light. By substituting ${\bf H} = \frac{1}{{\rm i}\omega\mu} \nabla \times {\bf E}$ into (\ref{eq:1-12}) and denoting ${\rm i}{\omega}({\bf H}^{inc} -{\bf n}\times {\bf E}^{inc})\times {\bf n}$ by ${\bf g}$, we get the boundary conditions (\ref{eq:1-10}) for the electric field ${\bf E}$.
\end{rem}

\begin{figure}
\centering
\begin{tikzpicture}
\draw (0,0) ellipse (100pt and 70pt);
\filldraw[fill=black!15!white,draw=black] (0,0) circle (1.1cm);
\node at  (0, 0)  {${\Omega_{s}}$};
\node at (2.0, 0.1) {$\Omega$ };
\draw[thick][->](-1.8,-0.7)--(-1.1, 0);
\node[above = 1pt,left] at (-1.7,-1.0) {$\partial \Omega_{s}$};
\draw[thick][->](4.5,1.5)--(3.5, 0);
\node[below = 1pt,left] at (5.4,1.6) {$\partial \Omega$};
\draw[ultra thick][->](-1.1,2.9)--(0.2, 1.4);
\node[above= 1pt,left] at (-1.1,2.9) {${\rm Incident \,wave}$};
\end{tikzpicture}
\caption{Sketch of the domain.}
\end{figure}
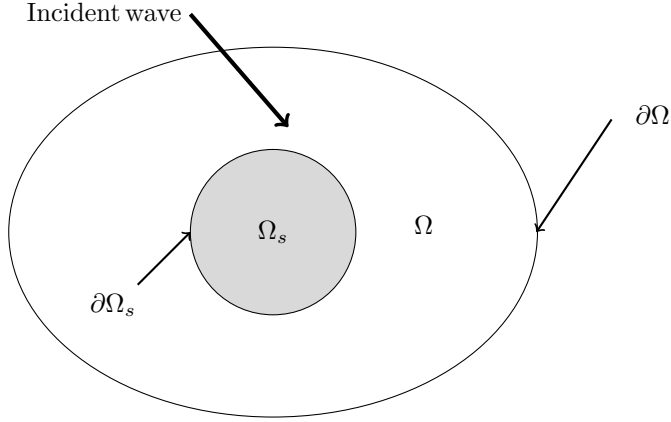\label{fig:1-1}

\section{Existence and uniqueness of the solutions}\label{sec-3}
In this section, we study the well-posedness of the problem (\ref{eq:1-9})-(\ref{eq:1-10}). To begin with, we introduce some notations. We denote $H^{1}(\Omega)$ as the conventional Sobolev spaces of complex-valued functions defined in $\Omega$ and $H^{1}_{0}(\Omega)$ as the subspace of $H^{1}(\Omega)$ consisting of functions whose traces are zero on $\partial \Omega$. Let ${L}^{p}(\Omega)$ and $\mathbf{L}^{p}(\Omega) = [L^{p}(\Omega)]^{3} $ be the Lebesgue spaces of complex-valued functions and vector-valued functions with 3 components, respectively. $ L^{2}$ inner-products in $L^{2}(\Omega) $ and $\mathbf{L}^{2}(\Omega)$ are denoted by $(\cdot,\cdot )$ without ambiguity. To avoid confusion, we use $(\cdot,\cdot )_{s}$ to denote the $ L^{2}$ inner-products in $L^{2}(\Omega_{s}) $ and $\mathbf{L}^{2}(\Omega_{s})$.

We define
\begin{equation}\label{eq:2-1}
\begin{array}{lll}
{\displaystyle \mathbf{H}(\mathbf{curl};\Omega)=\{\mathbf{u}\in
\mathbf{L}^{2}(\Omega)\,|\, \,\nabla\times \mathbf{u}\in \mathbf{L}^{2}(\Omega)
\}, }\\[2mm]
{\displaystyle \mathbf{H}(\mathbf{div};\Omega)=\{\mathbf{u}\in
\mathbf{L}^{2}(\Omega)\,|\,\, \nabla\cdot \mathbf{u}\in L^{2}(\Omega)
\}, }\\[2mm]
\end{array}
\end{equation}
which are equipped with the norms
\begin{equation*}
\begin{array}{lll}
{\displaystyle \|\mathbf{u}\|_{\mathbf{H}(\mathbf{curl};\Omega)}=\|\mathbf{u}\|_{\mathbf{L}^2(\Omega)}+\|\nabla\times \mathbf{u}\|_{\mathbf{L}^2(\Omega)}, }\\[2mm]
{\displaystyle \|\mathbf{u}\|_{\mathbf{H}(\mathbf{div};\Omega)}=\|\mathbf{u}\|_{\mathbf{L}^2(\Omega)}+\|\nabla\cdot \mathbf{u}\|_{L^2(\Omega)}. }
\end{array}
\end{equation*}
In addition,
\begin{equation}\label{eq:2-2}
\begin{array}{lll}
{\displaystyle \mathbf{H}_{T}(\mathbf{curl};\Omega)=\{\mathbf{u}\in
\mathbf{H}(\mathbf{curl};\Omega)\, |  \,\, \mathbf{u}_{T} = ({\bf n}\times {\bf u})\times{\bf n} \in {\bf L}^{2}(\partial \Omega)  \,\, \,{\rm on}\, \,\partial\Omega\}, }\\[2mm]
{\displaystyle \mathbf{H}_{0}(\mathbf{curl};\Omega)=\{\mathbf{u}\in
\mathbf{H}(\mathbf{curl};\Omega)\, |  \,\, \mathbf{u}\times \mathbf{n}={\bf 0} \,\, \,{\rm on}\, \,\partial\Omega\}, }\\[2mm]
{\displaystyle \mathbf{H}_{0}(\mathbf{div};\Omega)=\{\mathbf{u}\in
\mathbf{H}(\mathbf{div};\Omega)\, |  \,\, \mathbf{u}\cdot\mathbf{n}=0 \,\, \,{\rm on}\, \,\partial\Omega\}. }\\[2mm]
\end{array}
\end{equation}
Functions in $\mathbf{H}_{T}(\mathbf{curl};\Omega)$ are equipped with the norm
\begin{equation}
\|\mathbf{u}\|_{\mathbf{H}_{T}(\mathbf{curl};\Omega)}=\|\mathbf{u}\|_{\mathbf{L}^2(\Omega)}+\|\nabla\times \mathbf{u}\|_{\mathbf{L}^2(\Omega)} + \|\mathbf{u}_{T}\|_{\mathbf{L}^2(\partial \Omega)}.
\end{equation}
For the sake of convenience, we denote by
\begin{equation}\label{eq:2-3}
\begin{array}{lll}
{\displaystyle X(\Omega) =\mathbf{H}_{T}(\mathbf{curl};\Omega),\quad \|\mathbf{u}\|_{X(\Omega)}= \|\mathbf{u}\|_{\mathbf{H}(\mathbf{curl};\Omega)} , }\\[2mm]
{\displaystyle Y(\Omega_{s}) = \mathbf{H}_{0}(\mathbf{div};\Omega_{s}), \quad \|\mathbf{v}\|_{Y(\Omega_{s})} = \|\mathbf{v}\|_{\mathbf{H}(\mathbf{div};\Omega_{s})} }.
\end{array}
\end{equation}

We now give the weak formulation for the problem (\ref{eq:1-9})-(\ref{eq:1-10}). Given ${\bf g} \in {\bf L}^{2}(\partial \Omega)$, find $(\mathbf{E},{\bf J}) \in X(\Omega)\times Y(\Omega_{s})$, such that the equations 
 \begin{equation}\label{eq:2-4}
\left\{
\begin{array}{@{}l@{}}
{\displaystyle  \big(\mu^{-1}\nabla\times{\bf E}, \,\nabla\times{\bf u}) - \omega^{2}(\varepsilon{\bf E}, \,{\bf u}) - {\rm i}\omega \langle {\bf E}_{T}, {\bf u}_{T}\rangle = \langle {\bf g}, {\bf u}_{T}\rangle + {\rm i}\omega\big(\overline{\bf J}, \,{\bf u}\big) ,}\\[2mm]
 {\displaystyle \beta^{2}\big(\nabla\cdot {\bf J},\,\nabla\cdot {\bf v}\big)_{s}-\omega(\omega+{\rm i}\gamma)({\bf J},\,{\bf v})_{s} = -{\rm i}\omega \omega^{2}_{p}\varepsilon_{0}\big({\bf E}|_{\Omega_{s}},\,{\bf v}\big)_{s}} 
\end{array}
\right.
\end{equation}
hold for each $({\bf u}, {\bf v}) \in X(\Omega)\times Y(\Omega_{s})$, where $\varepsilon$ and $\mu$ are given in (\ref{eq:1-11}) and ${\bf u}_{T} = ({\bf n}\times {\bf u})\times {\bf n}$ with ${\bf n}$ being the unit outward normal to $\partial \Omega$. $\langle\cdot,\cdot\rangle$ denotes the $L^{2}$ inner product in $\mathbf{L}^{2}(\partial \Omega)$. $\overline{\bf J}$ and ${\bf E}|_{\Omega_{s}}$ are defined as follows.
\begin{equation}\label{eq:2-5}
\overline{\bf J}=\left\{
\begin{array}{l}
 {\bf J}, \,  \quad \hbox{in $\Omega_{s}$,}\\
 0, \,  \quad \hbox{in $\Omega/\Omega_{s}$,}
\end{array}
\right.
\quad {\bf E}|_{\Omega_{s}} = 1_{\Omega_{s}} {\bf E}\,,
\end{equation}
where $1_{\Omega_{s}}$ is the characteristic function of $\Omega_{s}$.

We now state the main result of this section.
\begin{theorem}\label{thm:2-1}
Let $\Omega$ and $\Omega_{s}$ be the bounded, simply-connected, Lipschitz polyhedron domains in $\mathbb{R}^{3}$ with $\bar{\Omega}_{s}\subset \Omega$. The equations (\ref{eq:2-4}) exist a unique solution $({\bf E},\,{\bf J}) \in X(\Omega)\times Y(\Omega_{s})$ satisfying
\begin{equation}\label{eq:2-5-0}
\Vert {\bf E} \Vert_{X(\Omega)} + \Vert {\bf J} \Vert_{Y(\Omega_{s})} \leq C \Vert {\bf g}\Vert_{{\bf L}^{2}(\partial \Omega)},
\end{equation}
where the constant $C$ might depend on $\mu_{1}$, $\varepsilon_{1}$,  $\mu_{2}$, $\varepsilon_{2}$, $\gamma$, $\beta$, $\omega$ and $\omega_{p}$.
\end{theorem}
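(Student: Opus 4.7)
The plan is to establish well-posedness of (\ref{eq:2-4}) by combining the Fredholm alternative with a unique continuation argument, in the spirit of the treatment of frequency-domain Maxwell's equations in \cite{Monk}. I would recast (\ref{eq:2-4}) as an operator equation $(\mathcal{A}+\mathcal{K})U=F$ on $X(\Omega)\times Y(\Omega_{s})$, with $\mathcal{A}$ induced by a coercive sesquilinear form and $\mathcal{K}$ compact. Once this decomposition is in place, existence reduces to uniqueness, and the bound (\ref{eq:2-5-0}) follows as a by-product.

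To obtain the Fredholm structure, I would split the sesquilinear form in (\ref{eq:2-4}) into a principal part---comprising $(\mu^{-1}\nabla\times\mathbf{E},\nabla\times\mathbf{u})$, $\beta^{2}(\nabla\cdot\mathbf{J},\nabla\cdot\mathbf{v})_{s}$, $-\mathrm{i}\omega\langle\mathbf{E}_{T},\mathbf{u}_{T}\rangle$, together with positive $L^{2}$-terms in $\mathbf{E}$ and $\mathbf{J}$ added by a suitable shift---plus a remainder formed of lower-order $L^{2}$-terms and the two cross-coupling integrals $\mathrm{i}\omega(\overline{\mathbf{J}},\mathbf{u})$ and $\mathrm{i}\omega\omega_{p}^{2}\varepsilon_{0}(\mathbf{E}|_{\Omega_{s}},\mathbf{v})_{s}$. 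The principal part is coercive on $X(\Omega)\times Y(\Omega_{s})$. Compactness of the remainder relies on Helmholtz-type decompositions of the two spaces that separate the kernels of $\nabla\times$ in $X(\Omega)$ (gradients of $H^{1}$-functions) and of $\nabla(\nabla\cdot\cdot)$ in $Y(\Omega_{s})$ (solenoidal fields with vanishing normal trace); on the orthogonal complement of each kernel one has the classical compact embedding into $\mathbf{L}^{2}$, which makes the zeroth-order and coupling terms compact.

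The uniqueness proof is the crux. Setting $\mathbf{g}=0$, I would test the first equation of (\ref{eq:2-4}) with $\mathbf{u}=\mathbf{E}$ and the second with $\mathbf{v}=\mathbf{J}$ and take imaginary parts, obtaining $-\omega\|\mathbf{E}_{T}\|_{\mathbf{L}^{2}(\partial\Omega)}^{2}=\omega\,\mathrm{Re}\int_{\Omega_{s}}\mathbf{J}\cdot\overline{\mathbf{E}}$ from the first identity and $\gamma\|\mathbf{J}\|_{\mathbf{L}^{2}(\Omega_{s})}^{2}=\omega_{p}^{2}\varepsilon_{0}\,\mathrm{Re}\int_{\Omega_{s}}\mathbf{J}\cdot\overline{\mathbf{E}}$ from the second. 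Eliminating the common real part yields $\gamma\|\mathbf{J}\|_{\mathbf{L}^{2}(\Omega_{s})}^{2}+\omega_{p}^{2}\varepsilon_{0}\|\mathbf{E}_{T}\|_{\mathbf{L}^{2}(\partial\Omega)}^{2}=0$, whence $\mathbf{J}=0$ in $\Omega_{s}$ and $\mathbf{E}_{T}=0$ on $\partial\Omega$. Feeding $\mathbf{J}=0$ back into the second equation and exploiting density of $C_{c}^{\infty}(\Omega_{s})^{3}$ in $\mathbf{L}^{2}(\Omega_{s})$ gives $\mathbf{E}=0$ on $\Omega_{s}$. The boundary condition (\ref{eq:1-10}) combined with $\mathbf{E}_{T}=0$ also yields $(\mu^{-1}\nabla\times\mathbf{E})\times\mathbf{n}=0$ on $\partial\Omega$, so the extension of $\mathbf{E}$ by zero outside $\overline{\Omega}$ lies in $\mathbf{H}(\mathbf{curl};\mathbb{R}^{3})$ and weakly solves the constant-coefficient reduced Maxwell equation on all of $\mathbb{R}^{3}$, while vanishing on the open set $\mathbb{R}^{3}\setminus\overline{\Omega}$. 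The unique continuation principle for Maxwell's system then forces $\mathbf{E}\equiv 0$.

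Having established uniqueness, the Fredholm alternative applied to $\mathcal{A}+\mathcal{K}$ delivers existence for every $\mathbf{g}\in\mathbf{L}^{2}(\partial\Omega)$, and the estimate (\ref{eq:2-5-0}) follows from the continuity of the inverse operator, either by the open mapping theorem or by the usual compactness-uniqueness contradiction argument. I expect the main obstacle to lie in the uniqueness step, and more specifically in making the unique continuation argument rigorous across the jump interface $\partial\Omega_{s}$; fortunately the vanishing of $\mathbf{E}$ throughout $\Omega_{s}$ (not just on $\partial\Omega_{s}$), combined with the constancy of $\mu$ and $\varepsilon$ inside $\Omega\setminus\overline{\Omega}_{s}$, reduces the continuation problem to the classical constant-coefficient UCP applied to the extension-by-zero of $\mathbf{E}$, with no further technical complication at the interface itself.
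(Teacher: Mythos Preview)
Your overall strategy---uniqueness via imaginary parts, then Fredholm alternative through a coercive-plus-compact splitting---is the same as the paper's. The uniqueness argument is essentially identical up to the point $\mathbf{J}=0$, $\mathbf{E}_T=0$; from there the paper simply invokes Theorem~4.12 of \cite{Monk} for the remaining homogeneous impedance Maxwell problem, whereas you extract the additional information $\mathbf{E}|_{\Omega_s}=0$ from the second equation of (\ref{eq:2-4}) and run unique continuation by hand on the constant-coefficient region. Your route is valid and slightly more self-contained; the paper's is shorter because it outsources the UCP step.

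The Fredholm part, however, is where your outline has a real gap. Your claim that the shifted principal part is coercive on $X(\Omega)\times Y(\Omega_s)$ is fine, but the assertion that ``Helmholtz-type decompositions \ldots\ make the zeroth-order and coupling terms compact'' is not correct as stated: the embeddings $X(\Omega)\hookrightarrow\mathbf{L}^2(\Omega)$ and $Y(\Omega_s)\hookrightarrow\mathbf{L}^2(\Omega_s)$ are \emph{not} compact (precisely because of the infinite-dimensional kernels you identify), so the $L^2$-remainder does not induce a compact operator on the full product space. Merely knowing that compactness holds on the orthogonal complements $X_0$, $Y_0$ does not make $\mathcal{K}$ compact on $X\times Y$. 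The paper closes this gap by first testing (\ref{eq:2-4}) with $(\nabla\xi,\nabla\times\mathbf{w})$ to obtain (\ref{eq:2-12-0})--(\ref{eq:2-13-0}), which express the kernel components $\nabla\varphi$ and $\nabla\times\mathbf{A}$ as bounded linear functions of $(\mathbf{E}_0,\mathbf{J}_0)\in X_0(\Omega)\times Y_0(\Omega_s)$; only after this reduction is the problem recast as $(I+K)\hat{\mathbf{Z}}=\hat{\mathbf{F}}$ on $X_0\times Y_0$, where the compact embeddings of Lemma~\ref{lem:2-3} genuinely apply and $K$ is compact (Theorem~\ref{thm:2-2}). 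Your sketch lists all the right ingredients, but this explicit elimination of the kernel components is the missing mechanism, not an implementation detail.
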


We first prove the uniqueness of solutions of (\ref{eq:2-4}). 
\begin{lemma}\label{lem:2-1}
There is at most one solution $({\bf E},\,{\bf J})\in X(\Omega)\times Y(\Omega_{s})$ of (\ref{eq:2-4}).
\end{lemma}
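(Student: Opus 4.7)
The plan is to set $\mathbf{g}=\mathbf{0}$ and show that any solution $(\mathbf{E},\mathbf{J})\in X(\Omega)\times Y(\Omega_s)$ of (\ref{eq:2-4}) must vanish. My starting point is the natural energy identity obtained by testing (\ref{eq:2-4})$_1$ with $\mathbf{u}=\mathbf{E}$ and (\ref{eq:2-4})$_2$ with $\mathbf{v}=\mathbf{J}$. In both identities the $\mu^{-1}|\nabla\times\mathbf{E}|^{2}$, $\beta^{2}|\nabla\cdot\mathbf{J}|^{2}$ and $\omega^{2}$ mass contributions are real-valued, so taking imaginary parts isolates the two dissipative terms $-\omega\|\mathbf{E}_T\|^{2}_{\mathbf{L}^{2}(\partial\Omega)}$ (from the Silver--M\"{u}ller boundary term) and $-\omega\gamma\|\mathbf{J}\|^{2}_{\mathbf{L}^{2}(\Omega_s)}$ (from the Drude damping), plus the imaginary parts of the two coupling terms. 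Both coupling contributions are proportional to the single real scalar $\mathrm{Re}\,(\mathbf{J},\mathbf{E}|_{\Omega_s})_{s}$, so eliminating it by a positive linear combination of the two identities yields $\omega_p^{2}\varepsilon_{0}\|\mathbf{E}_T\|^{2}_{\mathbf{L}^{2}(\partial\Omega)}+\gamma\|\mathbf{J}\|^{2}_{\mathbf{L}^{2}(\Omega_s)}=0$. Since $\omega_p,\varepsilon_0,\gamma>0$ this forces $\mathbf{E}_T\equiv\mathbf{0}$ on $\partial\Omega$ and $\mathbf{J}\equiv\mathbf{0}$ in $\Omega_s$.

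Once $\mathbf{J}=\mathbf{0}$, equation (\ref{eq:2-4})$_2$ collapses to $(\mathbf{E}|_{\Omega_s},\mathbf{v})_{s}=0$ for every $\mathbf{v}\in Y(\Omega_s)$, and since $Y(\Omega_s)\supset C^{\infty}_{c}(\Omega_s)^{3}$ is dense in $\mathbf{L}^{2}(\Omega_s)$ this gives $\mathbf{E}\equiv\mathbf{0}$ in $\Omega_s$. The remaining task is to show that $\mathbf{E}$ vanishes on $\Omega\setminus\bar\Omega_s$, where it satisfies the homogeneous constant-coefficient curl-curl equation $\nabla\times(\mu_2^{-1}\nabla\times\mathbf{E})-\omega^{2}\varepsilon_2\mathbf{E}=\mathbf{0}$ with vanishing tangential trace on both $\partial\Omega$ and $\partial\Omega_s$.

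For this last step I plan to use a zero-extension plus unique continuation argument. Pick an open ball $B$ strictly containing $\bar\Omega$ and extend $\mathbf{E}$ by zero onto $B\setminus\Omega$, prolonging $\mu,\varepsilon$ by the constants $\mu_{2},\varepsilon_{2}$ outside $\Omega$. Because $\mathbf{E}_T\equiv\mathbf{0}$ on $\partial\Omega$, the extension $\tilde{\mathbf{E}}$ belongs to $\mathbf{H}(\mathbf{curl};B)$; restricting any $\mathbf{u}\in C^{\infty}_{c}(B)^{3}$ to $\Omega$ yields a legitimate element of $X(\Omega)$, so the weak formulation transfers and shows that $\tilde{\mathbf{E}}$ satisfies $\nabla\times(\tilde\mu^{-1}\nabla\times\tilde{\mathbf{E}})-\omega^{2}\tilde\varepsilon\,\tilde{\mathbf{E}}=\mathbf{0}$ distributionally on all of $B$. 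On the open set $B\setminus\bar\Omega_s$ the coefficients are the constants $\mu_{2},\varepsilon_{2}$; testing with $\mathbf{u}=\nabla\phi$ for $\phi\in C^{\infty}_{c}(B\setminus\bar\Omega_s)$ gives $\nabla\cdot\tilde{\mathbf{E}}=0$ there, and the identity $\nabla\times\nabla\times=-\Delta+\nabla(\nabla\cdot)$ then reduces the equation to the componentwise Helmholtz equation $-\Delta\tilde{E}_i=\omega^{2}\mu_{2}\varepsilon_{2}\tilde{E}_i$. Since $\tilde{\mathbf{E}}$ vanishes identically on the nonempty open set $B\setminus\bar\Omega$, classical unique continuation for Helmholtz forces $\tilde{\mathbf{E}}\equiv\mathbf{0}$ throughout the connected set $B\setminus\bar\Omega_s$, hence on $\Omega\setminus\bar\Omega_s$.

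I expect the energy identity and the $\mathbf{L}^{2}(\Omega_s)$ density step to be routine; the delicate part and main obstacle is the extension/unique-continuation step. Its correctness depends critically on the vanishing tangential trace $\mathbf{E}_T\equiv\mathbf{0}$ on $\partial\Omega$ (without which the zero extension would not lie in $\mathbf{H}(\mathbf{curl};B)$ and spurious surface measures would appear in $\nabla\times\tilde{\mathbf{E}}$), and on the geometric hypothesis that the simply-connected Lipschitz domains $\Omega,\Omega_s$ make $B\setminus\bar\Omega_s$ connected, so that unique continuation can transport the vanishing from the exterior shell $B\setminus\bar\Omega$ into the annular region $\Omega\setminus\bar\Omega_s$. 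Unique continuation thus substitutes here for the coercivity that the indefinite curl-curl operator lacks.
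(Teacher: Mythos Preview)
Your energy-identity step is exactly the paper's argument: testing with $\mathbf{u}=\mathbf{E}$ and $\mathbf{v}=\mathbf{J}$, taking imaginary parts, and combining yields $\mathbf{J}=\mathbf{0}$ and $\mathbf{E}_T=\mathbf{0}$ on $\partial\Omega$.

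From that point the two routes diverge. The paper does not use the second equation again; it simply observes that $\mathbf{E}$ now solves the homogeneous Maxwell impedance problem
\[
(\mu^{-1}\nabla\times\mathbf{E},\nabla\times\mathbf{u})-\omega^{2}(\varepsilon\mathbf{E},\mathbf{u})-{\rm i}\omega\langle\mathbf{E}_T,\mathbf{u}_T\rangle=0\quad\text{for all }\mathbf{u}\in X(\Omega),
\]
and cites Theorem~4.12 of Monk to conclude $\mathbf{E}=\mathbf{0}$. Your route is more self-contained and actually exploits the coupled structure: plugging $\mathbf{J}=\mathbf{0}$ back into the hydrodynamic equation forces $\mathbf{E}|_{\Omega_s}=\mathbf{0}$, so you only need unique continuation for the \emph{constant-coefficient} Helmholtz equation on $B\setminus\bar\Omega_s$, bypassing the coefficient jump across $\partial\Omega_s$ that is hidden inside Monk's result. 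That is a genuine simplification over what a black-box citation of Monk entails.

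One caveat to tighten: simple connectedness of $\Omega_s$ in $\mathbb{R}^3$ does \emph{not} by itself guarantee that $B\setminus\bar\Omega_s$ is connected (a thick spherical shell is simply connected but disconnects space). You flag this as a needed geometric hypothesis, which is honest, but it is not a consequence of the paper's stated assumptions as written. Either add ``$\partial\Omega_s$ connected'' (which is certainly intended for a polyhedral scatterer), or note that Monk's Theorem~4.12 is available as a fallback that does not require this.
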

\begin{proof}
Since (\ref{eq:2-4}) is a linear system, we only need to show that $({\bf E},\,{\bf J}) = ({\bf 0}, \bf{0})$ is the only solution of (\ref{eq:2-4}) with ${\bf g} = {\bf 0}$. To this end, we first choose ${\bf u} = {\bf E}$ in the first equation of (\ref{eq:2-4}) and take the imaginary part of the equation to obtain
\begin{equation}\label{eq:2-6}
-\Vert {\bf E}_{T}\Vert^{2}_{{\bf L}^{2}(\partial \Omega)} = {\rm Re}\big(\overline{\bf J}, \,{\bf E}\big).
\end{equation}
Next by setting ${\bf v} = {\bf J}$ in the second equation of (\ref{eq:2-4}) and taking the imaginary part of the equation, we have
\begin{equation}\label{eq:2-7}
\frac{\gamma}{\omega_{p}^{2}\varepsilon_{0}} \Vert {\bf J} \Vert^{2}_{{\bf L}^{2}(\Omega_{s})} = {\rm Re}\big({\bf E}|_{\Omega_{s}},\,{\bf J}\big)_{s}.
\end{equation}
Since $\big(\overline{\bf J}, \,{\bf E}\big)= \big({\bf E}|_{\Omega_{s}},\,{\bf J}\big)_{s}$, from (\ref{eq:2-6}) and (\ref{eq:2-7}), we deduce that
\begin{equation}
{\bf J} = {\bf 0}, \quad {\bf E}_{T} = {\bf 0}.
\end{equation}
Thus we find that ${\bf E}$ satisfies
\begin{equation}\label{eq:2-7-0}
 \big(\mu^{-1}\nabla\times{\bf E}, \,\nabla\times{\bf u}) - \omega^{2}(\varepsilon{\bf E}, \,{\bf u}) - {\rm i}\omega \langle {\bf E}_{T}, {\bf u}_{T}\rangle =0,\quad {\rm for}\,\,{\rm all}\,\,{\bf u}\in X(\Omega).
\end{equation}
It was proved in Theorem~4.12 of \cite{Monk} that the homogeneous problem (\ref{eq:2-7-0}) exists the only solution ${\bf E} = 0$. Consequently, $({\bf E},\,{\bf J}) = ({\bf 0}, \bf{0})$ is the only solution of (\ref{eq:2-5}) with ${\bf g} =0$.\qquad  \end{proof}

Before proving the existence of solutions of (\ref{eq:2-4}), we give two useful lemmas.
\begin{lemma}\label{lem:2-2}
We have the following Helmholtz decompositions for $X(\Omega)$ and $Y(\Omega_{s})$
\begin{equation}\label{eq:2-8}
X(\Omega) = X_{0}(\Omega) \oplus \nabla H_{0}^{1}(\Omega),\quad Y(\Omega_{s}) = Y_{0}(\Omega_{s})\oplus \nabla \times \mathbf{H}_{0}(\mathbf{curl};\Omega_{s}),
\end{equation}
where
\begin{equation}\label{eq:2-9}
\begin{array}{lll}
{\displaystyle X_{0}(\Omega) = \{ {\bf u}\in X(\Omega) \;\,| \;\,(\varepsilon {\bf u},\,\nabla \xi)= 0\quad {\rm for}\,\,{\rm all}\,\,\xi\in H_{0}^{1}(\Omega)\}, }\\[2mm]
{\displaystyle Y_{0}(\Omega_{s}) = \{ {\bf v}\in Y(\Omega_{s}) \;\,| \;\,( {\bf v},\,\nabla \times {\bf w})_{s}= 0\quad {\rm for}\,\,{\rm all}\,\,{\bf w}\in \mathbf{H}_{0}(\mathbf{curl};\Omega_{s})\}. }
\end{array}
\end{equation}
\end{lemma}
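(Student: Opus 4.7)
The plan is to prove each Helmholtz decomposition separately by constructing the ``potential'' component of any ${\bf u}\in X(\Omega)$ or ${\bf v}\in Y(\Omega_s)$ via an auxiliary variational problem, then showing that the remainder lies in $X_0(\Omega)$ or $Y_0(\Omega_s)$, and finally verifying directness by a one-line energy argument.

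For the first decomposition, given ${\bf u}\in X(\Omega)$, I would use the Lax--Milgram theorem to solve, in $H_0^1(\Omega)$, the weighted Poisson problem
\[
(\varepsilon\nabla\phi,\nabla\xi) = (\varepsilon{\bf u},\nabla\xi) \qquad \forall\xi\in H_0^1(\Omega),
\]
whose bilinear form is continuous and coercive thanks to the Poincar\'e inequality and the strict positivity of the piecewise-constant $\varepsilon$. Setting ${\bf u}_0 := {\bf u} - \nabla\phi$, the variational identity gives ${\bf u}_0\in X_0(\Omega)$, once I check that $\nabla\phi\in X(\Omega)$: this holds because $\nabla\times\nabla\phi=0$ and, since $\phi|_{\partial\Omega}=0$, the tangential trace $(\nabla\phi)_T$ is the surface gradient of the (vanishing) boundary trace and hence trivially lies in ${\bf L}^2(\partial\Omega)$. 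Directness reduces to testing with $\xi=\phi$: if $\nabla\phi\in X_0(\Omega)$ then $\int\varepsilon|\nabla\phi|^2=0$, forcing $\phi\equiv 0$.

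For the second decomposition, which carries the main technical difficulty, I would work by $L^2$-orthogonal projection. First I would verify that every $\nabla\times{\bf w}$ with ${\bf w}\in{\bf H}_0(\mathbf{curl};\Omega_s)$ lies in $Y(\Omega_s)$: divergence-freeness is automatic, and the normal trace vanishes via an integration-by-parts identity that uses only ${\bf w}\times{\bf n}=\mathbf{0}$. The crux is then to show that $\nabla\times{\bf H}_0(\mathbf{curl};\Omega_s)$ is a closed subspace of ${\bf L}^2(\Omega_s)$. I would restrict $\nabla\times$ to the $L^2$-orthogonal complement of its kernel inside ${\bf H}_0(\mathbf{curl};\Omega_s)$, where the Friedrichs--Weber inequality (which relies on Weber's compact embedding, and therefore on the simply-connected Lipschitz hypothesis on $\Omega_s$) yields $\|{\bf w}\|_{{\bf L}^2}\le C\|\nabla\times{\bf w}\|_{{\bf L}^2}$; closedness then follows from the bounded-below property of the restricted operator. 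Once the range is closed, $L^2$-orthogonally splitting any ${\bf v}\in Y(\Omega_s)$ produces ${\bf v}={\bf v}_1+{\bf v}_2$ with ${\bf v}_1\in\nabla\times{\bf H}_0(\mathbf{curl};\Omega_s)$ and ${\bf v}_2\in Y_0(\Omega_s)$ by construction, while directness follows by testing the orthogonality that defines $Y_0(\Omega_s)$ against the vector potential itself. The principal obstacle throughout is precisely this closedness statement, which is the one place where the geometry of $\Omega_s$ genuinely intervenes.
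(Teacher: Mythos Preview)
Your plan is correct and matches the paper's approach in spirit. For $X(\Omega)$ the paper simply cites Monk's Lemma~4.5, whose proof is exactly the Lax--Milgram construction of $\phi$ you describe; for $Y(\Omega_s)$ the paper's argument is the same as yours---show $\nabla\times\mathbf{H}_0(\mathbf{curl};\Omega_s)$ is a closed subspace and then invoke the projection theorem---only the paper leaves the closedness step as ``not difficult'' whereas you spell out the Friedrichs--Weber mechanism. Your version is more detailed but not genuinely different.
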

\begin{proof}
The Helmholtz decomposition for $X(\Omega)$ was proved in Lemma~4.5 of \cite{Monk}. It is not difficult to show that $\nabla \times \mathbf{H}_{0}(\mathbf{curl};\Omega_{s})$ is a closed subspace of $Y(\Omega_{s})$. Therefore the Helmholtz decomposition for $Y(\Omega_{s})$ follows from the projection theorem. For every ${\bf v} \in  Y(\Omega_{s})$, we can write it as
\begin{equation}
{\bf v} = {\bf v}_{0}+\nabla\times{\bf A},
\end{equation}
where ${\bf v}_{0} \in Y_{0}(\Omega_{s})$, ${\bf A}\in\mathbf{H}_{0}(\mathbf{curl};\Omega_{s})$. In particular, we can take ${\bf A}$ to be divergence-free, i.e.,$\nabla\cdot{\bf A} = 0$. \quad \end{proof}

\begin{lemma}\label{lem:2-3}
$X_{0}(\Omega)$ and $Y_{0}(\Omega_{s})$ are compactly embedded  in ${\bf L}^{2}(\Omega)$ and ${\bf L}^{2}(\Omega_{s})$, respectively.
\end{lemma}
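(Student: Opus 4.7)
The plan is to treat the two spaces separately, since they carry different structural information and call for different classical compactness tools. In both cases the proof reduces to an application of a Weber-type theorem, suitably combined with a Helmholtz decomposition and elliptic regularity on Lipschitz polyhedra.

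For $X_0(\Omega)$, I would reduce the statement to the compact embedding theorem for impedance-type curl spaces proved in Monk's book (Theorem~4.7 and Theorem~4.11 in \cite{Monk}). The setup here matches that framework exactly: $X(\Omega)=\mathbf{H}_T(\mathbf{curl};\Omega)$, and $X_0(\Omega)$ is defined by the $\varepsilon$-weighted orthogonality condition that removes precisely the (infinite-dimensional) gradient part $\nabla H_0^1(\Omega)$ identified in Lemma~\ref{lem:2-2}. Although $\varepsilon$ is only piecewise constant across $\partial\Omega_s$, that is precisely the setting the cited theorem is designed for; the argument decomposes a field $\mathbf{u}\in X_0(\Omega)$ into a regular (fractional Sobolev) part and a gradient corrector, and exploits elliptic regularity for the scalar transmission problem associated with the jump in $\varepsilon$. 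I would spell this out only briefly and cite Monk for the details.

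For $Y_0(\Omega_s)$, the first step is to observe that the defining orthogonality condition forces elements to be curl-free in the distributional sense. Testing $(\mathbf{v},\nabla\times\mathbf{w})_s=0$ against $\mathbf{w}\in[C_0^\infty(\Omega_s)]^3\subset\mathbf{H}_0(\mathbf{curl};\Omega_s)$ yields $\nabla\times\mathbf{v}=\mathbf{0}$, so
\begin{equation*}
Y_0(\Omega_s)\subset \mathbf{H}_0(\mathbf{div};\Omega_s)\cap\{\mathbf{v}\in\mathbf{L}^2(\Omega_s):\nabla\times\mathbf{v}=\mathbf{0}\}.
\end{equation*}
Since $\Omega_s$ is simply connected, any such $\mathbf{v}$ can be written $\mathbf{v}=\nabla\phi$ with $\phi\in H^1(\Omega_s)/\mathbb{R}$, and the conditions $\nabla\cdot\mathbf{v}\in L^2(\Omega_s)$, $\mathbf{v}\cdot\mathbf{n}=0$ on $\partial\Omega_s$ translate into a homogeneous-Neumann Poisson problem for $\phi$ with $L^2$ datum. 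Elliptic regularity on Lipschitz polyhedra then gives $\phi\in H^{3/2+s}(\Omega_s)$ for some $s>0$, and hence $\mathbf{v}=\nabla\phi\in \mathbf{H}^{1/2+s}(\Omega_s)$, which embeds compactly into $\mathbf{L}^2(\Omega_s)$ by Rellich--Kondrachov. A bounded sequence in $Y_0(\Omega_s)$ therefore yields a bounded sequence of potentials in $H^{3/2+s}$, from which one can extract a subsequence convergent in $\mathbf{L}^2(\Omega_s)$.

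The main obstacle is the sharp regularity exponent for the Neumann problem on a non-smooth Lipschitz polyhedral domain; the sharp $H^{3/2+s}$ statement is classical (Jochmann, Costabel, Dauge) but one has to be careful that only \emph{some} gain of regularity beyond $H^1$ is needed for compactness, which is available on arbitrary Lipschitz domains. For $X_0(\Omega)$ the analogous subtlety is the jump of $\varepsilon$ across $\partial\Omega_s$, but this is precisely the generality handled in \cite{Monk}, so I would lean on that reference rather than redo the argument.
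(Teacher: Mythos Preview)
Your proposal is correct. For $X_0(\Omega)$ you do exactly what the paper does: cite Theorem~4.7 of \cite{Monk}. For $Y_0(\Omega_s)$ the paper supplies no argument at all beyond the remark that the result ``can be proved by a similar trick''; your route via $\nabla\times\mathbf{v}=\mathbf{0}$, the scalar potential $\mathbf{v}=\nabla\phi$ on the simply-connected $\Omega_s$, and Neumann regularity is a valid and explicit way to fill this in. A slightly shorter alternative---and probably what the paper intends by ``similar trick''---is to stop once you have shown $\nabla\times\mathbf{v}=\mathbf{0}$, observe that therefore $Y_0(\Omega_s)\subset\mathbf{H}_0(\mathbf{div};\Omega_s)\cap\mathbf{H}(\mathbf{curl};\Omega_s)$, and invoke Weber's compact embedding of that space into $\mathbf{L}^2(\Omega_s)$ directly, which is the exact dual of Monk's argument for the curl case. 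Your potential-based version is equivalent in content but trades the black-box Weber theorem for an explicit appeal to scalar elliptic regularity; either is fine here.
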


The compact embedding of $X_{0}(\Omega)$ was proved in Theorem~4.7 of \cite{Monk} and the compactness property of $Y_{0}(\Omega_{s})$ can be proved by a similar trick. 

Now that we know $X(\Omega) = X_{0}(\Omega) \oplus \nabla H_{0}^{1}(\Omega)$ and $Y(\Omega_{s}) = Y_{0}(\Omega_{s})\oplus \nabla \times \mathbf{H}_{0}(\mathbf{curl};\Omega_{s})$, we can write any solution $({\bf E},\,{\bf J})$ of (\ref{eq:2-4}) as 
\begin{equation}\label{eq:2-10}
{\bf E} = {\bf E}_{0} + \nabla \varphi, \quad {\bf J} = {\bf J}_{0} + \nabla\times{\bf A}
\end{equation}
for some $({\bf E}_{0},\,{\bf J}_{0})\in X_{0}(\Omega) \times Y_{0}(\Omega_{s})$ and $(\varphi,\,{\bf A})\in H_{0}^{1}(\Omega) \times \mathbf{H}_{0}(\mathbf{curl};\Omega_{s})$. In addition, we assume that ${\bf A}$ satisfies $\nabla\cdot {\bf A} = 0$.

Substituting (\ref{eq:2-10}) into (\ref{eq:2-4}), we find that 
 \begin{equation}\label{eq:2-11}
\left\{
\begin{array}{@{}l@{}}
{\displaystyle  \big(\mu^{-1}\nabla\times{\bf E}_{0}, \,\nabla\times{\bf u}) - \omega^{2}(\varepsilon({\bf E}_{0}+\nabla \varphi), \,{\bf u}) - {\rm i}\omega \langle {\bf E}_{0,T}, {\bf u}_{T}\rangle = \langle {\bf g}, {\bf u}_{T}\rangle }\\[2mm]
{\displaystyle \qquad \qquad \qquad \qquad  \qquad \qquad \qquad \qquad \qquad \qquad \qquad+\, {\rm i}\omega\big(\overline{{\bf J}_{0} + \nabla\times{\bf A}}, \,{\bf u}\big) ,}\\[2mm]
 {\displaystyle -\omega(\omega+{\rm i}\gamma)(({\bf J}_{0} + \nabla\times{\bf A}),\,{\bf v})_{s} + \beta^{2}\big(\nabla\cdot {\bf J}_{0},\,\nabla\cdot {\bf v}\big)_{s} = -{\rm i}\omega \omega^{2}_{p}\varepsilon_{0}\big(({\bf E}_{0} + \nabla \varphi)|_{\Omega_{s}},\,{\bf v}\big)_{s}}
\end{array}
\right.
\end{equation}
for all $({\bf u}, {\bf v}) \in X(\Omega)\times Y(\Omega_{s})$. 

Now taking $({\bf u}, {\bf v}) = (\nabla \xi, \,\nabla \times{\bf w})$ in (\ref{eq:2-11}), where $ \xi \in H_{0}^{1}(\Omega)$ and ${\bf w} \in\mathbf{H}_{0}(\mathbf{curl};\Omega_{s})$, we obtain
\begin{equation}\label{eq:2-12-0}
-\omega^{2}\big(\varepsilon \nabla \varphi,\,\nabla\xi\big) = {\rm i}\omega\big(\overline{\bf J}_{0}, \,\nabla\xi), \quad {\rm for}\,\,{\rm all}\,\,\xi \in H_{0}^{1}(\Omega),
\end{equation}
and
\begin{equation}\label{eq:2-12}
\left\{
\begin{array}{@{}l@{}}
{\displaystyle -\omega(\omega+{\rm i}\gamma)\big( \nabla\times{\bf A},\, \nabla \times{\bf w}\big)_{s} =  -{\rm i}\omega \omega^{2}_{p}\varepsilon_{0}\big({\bf E}_{0} |_{\Omega_{s}},\,\nabla \times{\bf w}\big)_{s}, \;\; {\rm for}\,\,{\rm all}\,\,{\bf w}\in \mathbf{H}_{0}(\mathbf{curl};\Omega_{s}),}\\[2mm]
{\displaystyle \big({\bf A},\, \nabla \eta)_{s} = 0, \quad {\rm for}\,\,{\rm all}\,\, \eta \in H_{0}^{1}(\Omega_{s}),}
\end{array}
\right.
\end{equation}
where we have used the fact that $\big(\overline{\nabla\times{\bf A}}, \, \nabla \xi\big) =  \big(\nabla\times \overline{\bf A}, \, \nabla \xi\big) =0 $ and $\big((\nabla \varphi)|_{\Omega_{s}},\,\nabla\times {\bf w}\big)_{s} = 0$. By introducing a Lagrangian multiplier $q\in H_{0}^{1}(\Omega_s)$, we can rewrite (\ref{eq:2-12}) as
\begin{equation}\label{eq:2-13-0}
\left\{
\begin{array}{@{}l@{}}
{\displaystyle \omega(\omega+{\rm i}\gamma)\big( \nabla\times{\bf A},\, \nabla \times{\bf w}\big)_{s} + \big(\nabla q, \, {\bf w}\big)_{s}=  {\rm i}\omega \omega^{2}_{p}\varepsilon_{0}\big({\bf E}_{0} |_{\Omega_{s}},\,\nabla \times{\bf w}\big)_{s},}\\[2mm]
{\displaystyle \qquad \qquad \qquad \qquad  \qquad \qquad \qquad \qquad \qquad \qquad \qquad \; {\rm for}\,\,{\rm all}\,\,{\bf w}\in \mathbf{H}_{0}(\mathbf{curl};\Omega_{s}),}\\[0.5mm]
{\displaystyle \big({\bf A},\, \nabla \eta)_{s} = 0, \quad {\rm for}\,\,{\rm all}\,\, \eta \in H_{0}^{1}(\Omega_{s}).}
\end{array}
\right.
\end{equation}
Next we take $({\bf u}, {\bf v}) \in  X_{0}(\Omega) \times Y_{0}(\Omega_{s})$ in (\ref{eq:2-11}) to obtain
\begin{equation}\label{eq:2-13}
\left\{
\begin{array}{@{}l@{}}
{\displaystyle  \big(\mu^{-1}\nabla\times{\bf E}_{0}, \,\nabla\times{\bf u}\big) - \omega^{2}(\varepsilon{\bf E}_{0}, \,{\bf u}) - {\rm i}\omega \langle {\bf E}_{0,T}, {\bf u}_{T}\rangle = \langle {\bf g}, {\bf u}_{T}\rangle + {\rm i}\omega\big(\overline{{\bf J}_{0} + \nabla\times{\bf A}}, \,{\bf u}\big) ,}\\[2mm]
 {\displaystyle -\omega(\omega+{\rm i}\gamma)\big({\bf J}_{0} ,\,{\bf v}\big)_{s} + \beta^{2}\big(\nabla\cdot {\bf J}_{0},\,\nabla\cdot {\bf v}\big)_{s} = -{\rm i}\omega \omega^{2}_{p}\varepsilon_{0}\big(({\bf E}_{0} + \nabla \varphi)|_{\Omega_{s}},\,{\bf v}\big)_{s}.} 
\end{array}
\right.
\end{equation}

Combining (\ref{eq:2-11})-(\ref{eq:2-13}), we now reformulate the problem (\ref{eq:2-4}) as follows. Find $({\bf E}_{0},\,{\bf J}_{0})\in X_{0}(\Omega) \times Y_{0}(\Omega_{s})$, such that
\begin{equation}\label{eq:2-14}
\left\{
\begin{array}{@{}l@{}}
{\displaystyle  \big(\mu^{-1}\nabla\times{\bf E}_{0}, \,\nabla\times{\bf u}\big) - \omega^{2}(\varepsilon{\bf E}_{0}, \,{\bf u}) - {\rm i}\omega \langle {\bf E}_{0,T}, {\bf u}_{T}\rangle + \beta^{2}\big(\nabla\cdot {\bf J}_{0},\,\nabla\cdot {\bf v}\big)_{s}}\\[2mm]
{\displaystyle \; -\,\omega(\omega+{\rm i}\gamma)\big({\bf J}_{0} ,\,{\bf v}\big)_{s}  +{\rm i}\omega \omega^{2}_{p}\varepsilon_{0}\big({\bf E}_{0}|_{\Omega_{s}},\,{\bf v}\big)_{s} - {\rm i}\omega\big(\overline{\bf J}_{0}, \,{\bf u}\big) }\\[2mm]
{\displaystyle \; =\, {\rm i}\omega\big(\overline{\nabla\times{\bf A}}, \,{\bf u}\big) -{\rm i}\omega \omega^{2}_{p}\varepsilon_{0}\big((\nabla \varphi)|_{\Omega_{s}},\,{\bf v}\big)_{s} + \langle {\bf g}, {\bf u}_{T}\rangle} 
\end{array}
\right.
\end{equation}
for all $({\bf u}, {\bf v}) \in  X_{0}(\Omega) \times Y_{0}(\Omega_{s})$, where $\varphi \in H_{0}^{1}(\Omega)$ and ${\bf A} \in \mathbf{H}_{0}(\mathbf{curl};\Omega_{s})$ satisfy (\ref{eq:2-12-0}) and (\ref{eq:2-13-0}), respectively.

We now define the sesquilinear form $a_{+}$ by
\begin{equation}\label{eq:2-15}
\begin{array}{lll}
{\displaystyle a_{+}(\hat{{\bf u}}, \,\hat{{\bf v}}) = \big(\mu^{-1}\nabla\times{\bf u}_{1}, \,\nabla\times{\bf v}_{1}\big) + \omega^{2}(\varepsilon{\bf u}_{1}, \,{\bf v}_{1}) - {\rm i}\omega \langle {\bf u}_{1,T}, {\bf v}_{1,T}\rangle }\\[2mm]
{\displaystyle \qquad \qquad \quad +\, \beta^{2}\big(\nabla\cdot {\bf u}_{2},\,\nabla\cdot {\bf v}_{2}\big)_{s}+\omega(\omega-{\rm i}\gamma)\big({\bf u}_{2} ,\,{\bf v}_{2}\big)_{s} }
\end{array}
\end{equation}
for all $\hat{{\bf u}} = ({\bf u}_{1},{\bf u}_{2}),  \hat{{\bf v}} = ({\bf v}_{1},{\bf v}_{2})\in X_{0}(\Omega) \times Y_{0}(\Omega_{s})$. For convenience, we introduce the following notations. 
\begin{equation*}
\Vert \hat{{\bf u}} \Vert_{X(\Omega)\times Y(\Omega_{s})}:=\Vert {\bf u}_{1} \Vert_{X(\Omega)} + \Vert {\bf u}_{2} \Vert_{Y(\Omega_{s})},\; \Vert \hat{{\bf u}} \Vert_{{\bf L}^{2}(\Omega)\times {\bf L}^{2}(\Omega_{s})}:=\Vert {\bf u}_{1} \Vert_{{\bf L}^{2}(\Omega)} + \Vert {\bf u}_{2} \Vert_{{\bf L}^{2}(\Omega_{s})}.
\end{equation*}
It is not difficult to prove the following lemma.
\begin{lemma}\label{lem:2-4}
There exists a constant $\alpha>0$ depending on $\mu_{1}$, $\varepsilon_{1}$,  $\mu_{2}$, $\varepsilon_{2}$, $\gamma$, $\beta$, $\omega$ and $\omega_{p}$ such that
\begin{equation}\label{eq:2-16}
|a_{+}(\hat{{\bf u}}, \,\hat{{\bf u}})|\geq \alpha \Vert \hat{{\bf u}} \Vert_{X(\Omega)\times Y(\Omega_{s})}, \quad {\rm for}\,\, {\rm all}\,\,\hat{{\bf u}} = ({\bf u}_{1},{\bf u}_{2}) \in X_{0}(\Omega) \times Y_{0}(\Omega_{s}).
\end{equation}
\end{lemma}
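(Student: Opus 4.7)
The plan is to test $a_+$ with $\hat{\bf v}=\hat{\bf u}$ and extract coercivity from the real part, which by the very design of $a_+$ is a sum of positive-definite quadratic terms. A direct substitution in (\ref{eq:2-15}) gives
\begin{equation*}
a_+(\hat{\bf u},\hat{\bf u}) = (\mu^{-1}\nabla\times{\bf u}_1,\nabla\times{\bf u}_1) + \omega^{2}(\varepsilon{\bf u}_1,{\bf u}_1) - {\rm i}\omega\|{\bf u}_{1,T}\|^2_{{\bf L}^2(\partial\Omega)} + \beta^{2}\|\nabla\cdot{\bf u}_2\|^2_{s} + \omega(\omega-{\rm i}\gamma)\|{\bf u}_2\|^2_{s},
\end{equation*}
so that
\begin{equation*}
{\rm Re}\,a_+(\hat{\bf u},\hat{\bf u}) = (\mu^{-1}\nabla\times{\bf u}_1,\nabla\times{\bf u}_1) + \omega^{2}(\varepsilon{\bf u}_1,{\bf u}_1) + \beta^{2}\|\nabla\cdot{\bf u}_2\|^2_{s} + \omega^{2}\|{\bf u}_2\|^2_{s}.
\end{equation*}
All four contributions are nonnegative. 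Notice that (in contrast to the original weak formulation in (\ref{eq:2-4})) the signs of the $(\varepsilon{\bf u},{\bf u})$ term and of the $\gamma$-term have been flipped precisely to ensure this positivity; the subspace constraints defining $X_0(\Omega)$ and $Y_0(\Omega_s)$ in Lemma~\ref{lem:2-2} are in fact not needed here, so the coercivity will hold on the full product space.

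Next I would bound the real part from below termwise using the piecewise-constant positivity of the coefficients in (\ref{eq:1-11}). Specifically, $(\mu^{-1}\nabla\times{\bf u}_1,\nabla\times{\bf u}_1)\geq \max(\mu_1,\mu_2)^{-1}\|\nabla\times{\bf u}_1\|^2_{{\bf L}^2(\Omega)}$ and $\omega^{2}(\varepsilon{\bf u}_1,{\bf u}_1)\geq \omega^{2}\min(\varepsilon_1,\varepsilon_2)\|{\bf u}_1\|^2_{{\bf L}^2(\Omega)}$, while the last two terms directly furnish the $L^{2}$-norms of $\nabla\cdot{\bf u}_2$ and ${\bf u}_2$ on $\Omega_s$ with positive prefactors $\beta^{2}$ and $\omega^{2}$. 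Taking the minimum of these four prefactors produces a constant $c=c(\mu_1,\mu_2,\varepsilon_1,\varepsilon_2,\beta,\omega)>0$ with
\begin{equation*}
{\rm Re}\,a_+(\hat{\bf u},\hat{\bf u}) \;\geq\; c\,\bigl(\|{\bf u}_1\|^2_{{\bf L}^2(\Omega)} + \|\nabla\times{\bf u}_1\|^2_{{\bf L}^2(\Omega)} + \|{\bf u}_2\|^2_{{\bf L}^2(\Omega_s)} + \|\nabla\cdot{\bf u}_2\|^2_{L^2(\Omega_s)}\bigr).
\end{equation*}

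Finally, using the elementary inequality $(a+b)^2\le 2(a^2+b^2)$ twice, the right-hand side dominates $\tfrac{1}{4}(\|{\bf u}_1\|_{X(\Omega)}+\|{\bf u}_2\|_{Y(\Omega_s)})^2 = \tfrac{1}{4}\|\hat{\bf u}\|^2_{X(\Omega)\times Y(\Omega_s)}$. Combined with the trivial bound $|a_+(\hat{\bf u},\hat{\bf u})|\geq{\rm Re}\,a_+(\hat{\bf u},\hat{\bf u})$, this yields (\ref{eq:2-16}) with $\alpha=c/4$, where I interpret the inequality with $\|\hat{\bf u}\|^2$ on the right (the statement as printed is inhomogeneous in the scaling of $\hat{\bf u}$ and appears to carry a missing square). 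There is no genuine obstacle in this argument; everything reduces to reading off real and imaginary parts and exploiting the positive lower bounds on $\mu^{-1}$, $\varepsilon$, $\beta^2$, and $\omega^{2}$. The only mild subtlety is converting the sum-of-squares lower bound into the product norm defined as a sum of norms, which is handled by the $(a+b)^2\le 2(a^2+b^2)$ device.
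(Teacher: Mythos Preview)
Your argument is correct and is exactly the natural route: evaluate $a_+(\hat{\bf u},\hat{\bf u})$, take the real part, and read off termwise lower bounds using the positivity of $\mu^{-1}$, $\varepsilon$, $\beta^2$, $\omega^2$. The paper in fact omits the proof entirely (stating only that ``It is not difficult to prove the following lemma''), so there is nothing to compare against; your write-up fills the gap in the expected way. Your observation that the stated inequality is dimensionally inconsistent and should carry $\|\hat{\bf u}\|^2_{X(\Omega)\times Y(\Omega_s)}$ on the right is also correct, and your remark that the coercivity actually holds on the full space $X(\Omega)\times Y(\Omega_s)$ (not just $X_0\times Y_0$) is a useful clarification---indeed the paper later uses $a_+$ on $X_{0,h}\times Y_{0,h,s}$, which is not contained in $X_0\times Y_0$, so coercivity on the full space is what is really needed. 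Note also that by the paper's convention $\|\cdot\|_{X(\Omega)}=\|\cdot\|_{{\bf H}({\bf curl};\Omega)}$ excludes the boundary trace term, so the real part alone suffices; had the $\|{\bf u}_{1,T}\|_{{\bf L}^2(\partial\Omega)}$ contribution been part of the norm, one would additionally invoke the imaginary part.
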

To proceed further, we define a map $K$: ${\bf L}^{2}(\Omega)\times{\bf L}^{2}(\Omega_{s})\rightarrow {\bf L}^{2}(\Omega)\times{\bf L}^{2}(\Omega_{s})$ such that if $\hat{{\bf u}} = ({\bf u}_{1},{\bf u}_{2}) \in {\bf L}^{2}(\Omega)\times{\bf L}^{2}(\Omega_{s})$ then $K\hat{{\bf u}} \in X_{0}(\Omega) \times Y_{0}(\Omega_{s})\subset {\bf L}^{2}(\Omega)\times{\bf L}^{2}(\Omega_{s})$ satisfies
\begin{equation}\label{eq:2-17}
\begin{array}{lll}
{\displaystyle  a_{+}(K\hat{{\bf u}}, \,\hat{{\bf v}}) = -2\omega^{2}(\varepsilon{\bf u}_{1}, \,{\bf v}_{1}) -2\omega^{2}\big({\bf u}_{2} ,\,{\bf v}_{2}\big)_{s}-{\rm i}\omega \big(\overline{\bf u}_{2}, \,{\bf v}_{1}\big)+{\rm i}\omega \omega^{2}_{p}\varepsilon_{0}\big({\bf u}_{1}|_{\Omega_{s}},\,{\bf v}_{2}\big)_{s} }\\[2mm]
{\displaystyle \; - \,{\rm i}\omega\big(\overline{\nabla\times{\bf A}}, \,{\bf v}_{1}\big) + {\rm i}\omega \omega^{2}_{p}\varepsilon_{0}\big((\nabla \varphi)|_{\Omega_{s}},\,{\bf v}_{2}\big)_{s}, \; {\rm for}\,\, {\rm all}\,\,\hat{{\bf v}} = ({\bf v}_{1},{\bf v}_{2}) \in X_{0}(\Omega) \times Y_{0}(\Omega_{s}),}
\end{array}
\end{equation}
where $\varphi\in H_{0}^{1}(\Omega)$ and ${\bf A} \in \mathbf{H}_{0}(\mathbf{curl};\Omega_{s})$ are the solutions of 
\begin{equation}\label{eq:2-18}
-\omega^{2}\big(\varepsilon \nabla \varphi,\,\nabla\xi\big) = {\rm i}\omega\big(\overline{\bf u}_{2}, \,\nabla\xi), \quad {\rm for}\,\,{\rm all}\,\,\xi \in H_{0}^{1}(\Omega),
\end{equation}
and 
\begin{equation}\label{eq:2-19}
\left\{
\begin{array}{@{}l@{}}
{\displaystyle \omega(\omega+{\rm i}\gamma)\big( \nabla\times{\bf A},\, \nabla \times{\bf w}\big)_{s} + \big(\nabla q, \, {\bf w}\big)_{s}=  {\rm i}\omega \omega^{2}_{p}\varepsilon_{0}\big({\bf u}_{1} |_{\Omega_{s}},\,\nabla \times{\bf w}\big)_{s}, }\\[2mm]
{\displaystyle \qquad \qquad \qquad \qquad  \qquad \qquad \qquad \qquad \qquad \qquad \qquad\; {\rm for}\,\,{\rm all}\,\,{\bf w}\in \mathbf{H}_{0}(\mathbf{curl};\Omega_{s}),}\\[0.5mm]
{\displaystyle \big({\bf A},\, \nabla \eta)_{s} = 0, \quad {\rm for}\,\,{\rm all}\,\, \eta \in H_{0}^{1}(\Omega_{s}).}
\end{array}
\right.
\end{equation}

We have the following result.
\begin{theorem}\label{thm:2-2}
The operator $K$ is a bounded and compact map from ${\bf L}^{2}(\Omega)\times{\bf L}^{2}(\Omega_{s})$ into ${\bf L}^{2}(\Omega)\times{\bf L}^{2}(\Omega_{s})$. Moreover,
\begin{equation}\label{eq:2-20}
\Vert K \hat{{\bf u}} \Vert_{X(\Omega)\times Y(\Omega_{s})} \leq C \Vert \hat{{\bf u}} \Vert_{{\bf L}^{2}(\Omega)\times {\bf L}^{2}(\Omega_{s})}.
\end{equation}
\end{theorem}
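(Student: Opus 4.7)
The plan is to first establish that $K$ is well-defined by solving three variational problems in sequence, then derive the $X\times Y$ norm bound \eqref{eq:2-20}, and finally deduce compactness from Lemma~\ref{lem:2-3}. Given $\hat{\bf u} = ({\bf u}_1, {\bf u}_2) \in {\bf L}^2(\Omega) \times {\bf L}^2(\Omega_s)$, I first solve the scalar Poisson-type problem \eqref{eq:2-18} in $H_0^1(\Omega)$ by Lax--Milgram: the form $(\varepsilon \nabla\varphi, \nabla\xi)$ is coercive because $\varepsilon \geq \min(\varepsilon_1, \varepsilon_2) > 0$ combined with the Poincar\'e inequality, and the right-hand side is bounded by $\omega \|{\bf u}_2\|_{{\bf L}^2(\Omega_s)} \|\nabla\xi\|_{{\bf L}^2(\Omega)}$; this yields a unique $\varphi$ with $\|\nabla\varphi\|_{{\bf L}^2(\Omega)} \leq C\|{\bf u}_2\|_{{\bf L}^2(\Omega_s)}$. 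Next, I solve the mixed problem \eqref{eq:2-19} for $({\bf A}, q) \in \mathbf{H}_0(\mathbf{curl};\Omega_s) \times H_0^1(\Omega_s)$ via Brezzi's theory: the inf-sup condition for the pairing $(\nabla q, {\bf w})_s$ follows by testing with ${\bf w} = \nabla q$, which lies in $\mathbf{H}_0(\mathbf{curl};\Omega_s)$ since $q \in H_0^1(\Omega_s)$ forces ${\bf n}\times \nabla q = {\bf 0}$; coercivity on the kernel of weakly divergence-free fields in $\mathbf{H}_0(\mathbf{curl};\Omega_s)$ uses a Poincar\'e--Friedrichs inequality for the curl, which is a consequence of the compact embedding alluded to in Lemma~\ref{lem:2-3} together with the simply-connected Lipschitz geometry of $\Omega_s$. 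This yields $\|{\bf A}\|_{\mathbf{H}(\mathbf{curl};\Omega_s)} + \|q\|_{H^1(\Omega_s)} \leq C\|{\bf u}_1\|_{{\bf L}^2(\Omega_s)}$.

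With $\varphi$ and ${\bf A}$ in hand, I solve \eqref{eq:2-17} on $X_0(\Omega) \times Y_0(\Omega_s)$. Lemma~\ref{lem:2-4} provides coercivity of $a_+$, and the right-hand side of \eqref{eq:2-17}, viewed as an antilinear functional of $\hat{\bf v}$, is bounded by $C(\|\hat{\bf u}\|_{{\bf L}^2\times {\bf L}^2} + \|\nabla\times {\bf A}\|_{{\bf L}^2(\Omega_s)} + \|\nabla\varphi\|_{{\bf L}^2(\Omega)})\|\hat{\bf v}\|_{X\times Y}$ by Cauchy--Schwarz together with the trace embedding implicit in $X(\Omega)$. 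Substituting the bounds from the first step for $\nabla\varphi$ and ${\bf A}$ collapses this to $C\|\hat{\bf u}\|_{{\bf L}^2\times {\bf L}^2}\|\hat{\bf v}\|_{X\times Y}$. The complex Lax--Milgram theorem then furnishes a unique $K\hat{\bf u} \in X_0(\Omega) \times Y_0(\Omega_s)$ satisfying $\|K\hat{\bf u}\|_{X\times Y} \leq C\|\hat{\bf u}\|_{{\bf L}^2\times {\bf L}^2}$, which is the estimate \eqref{eq:2-20} and already contains the boundedness of $K$.

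For compactness, any bounded sequence $\{\hat{\bf u}_n\} \subset {\bf L}^2(\Omega) \times {\bf L}^2(\Omega_s)$ produces, via \eqref{eq:2-20}, a sequence $\{K\hat{\bf u}_n\}$ bounded in $X_0(\Omega) \times Y_0(\Omega_s)$. By Lemma~\ref{lem:2-3}, both $X_0(\Omega) \hookrightarrow {\bf L}^2(\Omega)$ and $Y_0(\Omega_s) \hookrightarrow {\bf L}^2(\Omega_s)$ are compact, so extracting subsequences in each factor gives convergence in ${\bf L}^2(\Omega) \times {\bf L}^2(\Omega_s)$; this shows $K$ is compact. I expect the main technical obstacle to lie in the saddle-point problem \eqref{eq:2-19}: one must simultaneously verify the inf-sup condition for $(\nabla q, {\bf w})_s$ and the coercivity of $\omega(\omega+{\rm i}\gamma)(\nabla\times \cdot,\nabla\times \cdot)_s$ on the divergence-free subspace of $\mathbf{H}_0(\mathbf{curl};\Omega_s)$, the latter requiring a Poincar\'e--Friedrichs inequality for the curl that is a close cousin of, but not literally contained in, the compact embedding asserted in Lemma~\ref{lem:2-3}, and that relies essentially on the topological hypotheses on $\Omega_s$.
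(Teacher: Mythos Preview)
Your proposal is correct and follows essentially the same route as the paper: solve \eqref{eq:2-18} and the saddle-point problem \eqref{eq:2-19} to obtain the auxiliary data $(\varphi,{\bf A})$ with the bounds \eqref{eq:2-23}--\eqref{eq:2-24}, then apply Lax--Milgram with the coercivity of $a_+$ from Lemma~\ref{lem:2-4} to get \eqref{eq:2-20}, and finally derive compactness from the compact embeddings of Lemma~\ref{lem:2-3}. Your treatment is in fact somewhat more explicit than the paper's, which simply invokes ``the classical theory of variational problems'' for \eqref{eq:2-19} and refers to Monk for compactness; your direct argument (bounded in $X_0\times Y_0$ implies relatively compact in ${\bf L}^2\times{\bf L}^2$ via Lemma~\ref{lem:2-3}) is exactly what that citation unpacks to. One small remark: the right-hand side of \eqref{eq:2-17} contains no boundary terms, so the ``trace embedding implicit in $X(\Omega)$'' you mention is not needed there---Cauchy--Schwarz in ${\bf L}^2$ suffices.
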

\begin{proof}
This theorem can be proved by using the Lax--Milgram theorem. To begin with, we check the conditions of the Lax--Milgram theorem. It is not difficult to show that $a_{+}(\cdot,\,\cdot)$ is bounded. That is, there exists a constant $C$, such that 
\begin{equation}
|a_{+}(\hat{{\bf u}}, \,\hat{{\bf v}})|\leq C \Vert \hat{{\bf u}} \Vert_{X(\Omega)\times Y(\Omega_{s})} \Vert \hat{{\bf v}} \Vert_{X(\Omega)\times Y(\Omega_{s})}
\end{equation}
holds for all $\hat{{\bf u}},\, \hat{{\bf v}} \in X_{0}(\Omega) \times Y_{0}(\Omega_{s})$. Coercivity of $a_{+}(\cdot,\,\cdot)$ has been given in Lemma~\ref{lem:2-4}. It remains to show that there exists a constant $C$ such that
\begin{equation}\label{eq:2-21}
\begin{array}{lll}
{\displaystyle |-2\omega^{2}(\varepsilon{\bf u}_{1}, \,{\bf v}_{1}) -2\omega^{2}\big({\bf u}_{2} ,\,{\bf v}_{2}\big)_{s}-{\rm i}\omega \big(\overline{\bf u}_{2}, \,{\bf v}_{1}\big)+{\rm i}\omega \omega^{2}_{p}\varepsilon_{0}\big({\bf u}_{1}|_{\Omega_{s}},\,{\bf v}_{2}\big)_{s}  }\\[2mm]
{\displaystyle -\, {\rm i}\omega\big(\overline{\nabla\times{\bf A}}, \,{\bf v}_{1}\big) + {\rm i}\omega \omega^{2}_{p}\varepsilon_{0}\big((\nabla \varphi)|_{\Omega_{s}},\,{\bf v}_{2}\big)_{s} |\leq C \Vert \hat{{\bf u}} \Vert_{{\bf L}^{2}(\Omega)\times {\bf L}^{2}(\Omega_{s})}  \Vert \hat{{\bf v}} \Vert_{{\bf L}^{2}(\Omega)\times {\bf L}^{2}(\Omega_{s})},}
\end{array}
\end{equation}
where $\varphi \in H_{0}^{1}(\Omega)$ and ${\bf A} \in \mathbf{H}_{0}(\mathbf{curl};\Omega_{s})$ are the solutions of (\ref{eq:2-18}) and (\ref{eq:2-19}), respectively. To prove (\ref{eq:2-21}), it suffices to deduce that
\begin{equation}\label{eq:2-22}
|\big((\nabla \varphi)|_{\Omega_{s}},\,{\bf v}_{2}\big)_{s}| + |\big(\overline{\nabla\times{\bf A}}, \,{\bf v}_{1}\big)|\leq C \Vert \hat{{\bf u}} \Vert_{{\bf L}^{2}(\Omega)\times {\bf L}^{2}(\Omega_{s})}  \Vert \hat{{\bf v}} \Vert_{{\bf L}^{2}(\Omega)\times {\bf L}^{2}(\Omega_{s})}.
\end{equation}
Since $\varphi$ satisfies (\ref{eq:2-18}), it is easy to see that 
\begin{equation}\label{eq:2-23}
\Vert \nabla \varphi\Vert _{{\bf L}^{2}(\Omega)} \leq C \Vert {\bf u}_{2}\Vert_{{\bf L}^{2}(\Omega_{s})}.
\end{equation}
By using the classical theory of variational problems \cite{Monk}, we see that the mixed problem (\ref{eq:2-19}) exists a unique solution $({\bf A},q)$. In addition, 
\begin{equation}\label{eq:2-24}
\Vert \nabla \times {\bf A}\Vert_{{\bf L}^{2}(\Omega_{s})}\leq C \Vert {\bf u}_{1}\Vert_{{\bf L}^{2}(\Omega)}.
\end{equation}
Combining (\ref{eq:2-23}) and (\ref{eq:2-24}), we have (\ref{eq:2-22}), which yields (\ref{eq:2-21}). Having verified the conditions of the Lax--Milgram theorem, we know $K\hat{{\bf u}}$ is well defined and obtain (\ref{eq:2-20}). The compactness of $K$ can be proved by applying a similar argument in Theorem~4.11 of \cite{Monk} and we omit the proof here.  \quad \quad \end{proof}

Next we define a vector $\hat{{\bf F}} = ({\bf F}_{1}, {\bf F}_{2}) \in  X_{0}(\Omega) \times Y_{0}(\Omega_{s})$
which satisfies
\begin{equation}\label{eq:2-25}
a_{+}(\hat{{\bf F}}, \,\hat{{\bf v}}) =  \langle {\bf g}, {\bf v}_{1,T}\rangle, \quad {\rm for}\,\, {\rm all}\,\,\hat{{\bf v}} = ({\bf v}_{1},{\bf v}_{2}) \in X_{0}(\Omega) \times Y_{0}(\Omega_{s}).
\end{equation}
By using the Lax--Milgram theorem again, we see that $\hat{{\bf F}}$ is well defined and 
\begin{equation}\label{eq:2-26}
\Vert \hat{{\bf F}} \Vert_{X(\Omega)\times Y(\Omega_{s})} \leq C \Vert {\bf g}\Vert_{{\bf L}^{2}(\partial \Omega)}.
\end{equation}
By virtue of the operator $K$, we find that the problem (\ref{eq:2-14}) is equivalent to finding $\hat{{\bf Z}} = (\mathbf{E}_{0},\mathbf{J}_{0}) \in X_{0}(\Omega) \times Y_{0}(\Omega_{s})$ such that 
\begin{equation}\label{eq:2-27}
(I+K)\hat{{\bf Z}} = \hat{{\bf F}}.
\end{equation}
Since $K$ is compact, by applying the Fredholm alternative theorem and Lemma~\ref{lem:2-1}, we see that (\ref{eq:2-27}) exists a unique solution $\hat{{\bf Z}}$ with the following estimate
\begin{equation}\label{eq:2-28}
\Vert \hat{{\bf Z}} \Vert_{{\bf L}^{2}(\Omega)\times {\bf L}^{2}(\Omega_{s})}\leq C \Vert \hat{{\bf F}} \Vert_{{\bf L}^{2}(\Omega)\times {\bf L}^{2}(\Omega_{s})}.
\end{equation}
Note that (\ref{eq:2-27}) implies that $\hat{{\bf Z}} = \hat{{\bf F}} - K\hat{{\bf Z}}$, from which we deduce
\begin{equation}\label{eq:2-29}
\begin{array}{lll}
{\displaystyle \Vert \hat{{\bf Z}} \Vert_{X(\Omega)\times Y(\Omega_{s})} \leq C\big(\Vert \hat{{\bf F}} \Vert_{X(\Omega)\times Y(\Omega_{s})} + \Vert K\hat{{\bf Z}} \Vert_{X(\Omega)\times Y(\Omega_{s})}\big)}\\[2mm]
{\displaystyle \qquad \qquad \qquad \quad \leq C\big(\Vert \hat{{\bf F}} \Vert_{X(\Omega)\times Y(\Omega_{s})} + \Vert \hat{{\bf Z}} \Vert_{{\bf L}^{2}(\Omega)\times {\bf L}^{2}(\Omega_{s})}\big),}
\end{array}
\end{equation}
where we have used (\ref{eq:2-20}).
Substituting (\ref{eq:2-28}) into (\ref{eq:2-29}) and applying (\ref{eq:2-26}), we arrive at 
\begin{equation}\label{eq:2-30}
\Vert {\bf E}_{0} \Vert_{X(\Omega)} + \Vert {\bf J}_{0}\Vert_{Y(\Omega_{s})}= \Vert \hat{{\bf Z}} \Vert_{X(\Omega)\times Y(\Omega_{s})} \leq C\Vert {\bf g}\Vert_{{\bf L}^{2}(\partial \Omega)}.
\end{equation}
Since $\varphi$ and ${\bf A}$ satisfy (\ref{eq:2-12-0}) and (\ref{eq:2-13-0}), respectively, it follow from (\ref{eq:2-30}) that
\begin{equation}\label{eq:2-31}
\Vert \nabla \varphi\Vert_{{\bf L}^{2}(\Omega)}+ \Vert \nabla \times {\bf A}\Vert_{{\bf L}^{2}(\Omega_{s})}\leq \Vert {\bf E}_{0} \Vert_{{\bf L}^{2}(\Omega)} + \Vert {\bf J}_{0}\Vert_{{\bf L}^{2}(\Omega_{s})} \leq C\Vert {\bf g}\Vert_{{\bf L}^{2}(\partial \Omega)}.
\end{equation}
Combining (\ref{eq:2-30})-(\ref{eq:2-31}) and recalling the Helmholtz decompositions for ${\bf E}$ and ${\bf J}$ in (\ref{eq:2-10}), we have (\ref{eq:2-5-0}) and complete the proof of Theorem~\ref{thm:2-1}.

\begin{corollary}
Under the assumptions of Theorem~\ref{thm:2-1}, there exists a $\delta >0$ such that for all $t$ with $0\leq t< \delta$, ${\bf J} \in {\bf H}^{\frac{1}{2}+t}(\Omega_{s})$. If in addition, $\Omega_{s}$ is a convex polyhedron, we have ${\bf J} \in {\bf H}^{1}(\Omega_{s})$. Moreover, if $\varepsilon $ is constant on the whole domain $\Omega$, then ${\bf E} \in {\bf H}^{\frac12}(\Omega)$.
\end{corollary}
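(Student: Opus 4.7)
The plan is to reduce the corollary to two applications of a classical embedding theorem for fields in $\mathbf{H}(\mathbf{curl})\cap\mathbf{H}(\mathbf{div})$ on Lipschitz (resp.\ convex) polyhedra, which states that such fields with either vanishing tangential or vanishing normal trace enjoy a Sobolev-regularity shift: they lie in $\mathbf{H}^{1/2+s}$ for some $s>0$ depending on the domain, and in $\mathbf{H}^1$ when the domain is convex. The task is therefore to extract, from the coupled system, the missing divergence/curl information for $\mathbf{J}$ and $\mathbf{E}$.

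For $\mathbf{J}$, I would test the second line of (\ref{eq:2-4}) against $\mathbf{v}\in C_c^\infty(\Omega_s)^3\subset Y(\Omega_s)$ and integrate the divergence pairing by parts to recover, in $\mathcal{D}'(\Omega_s)^3$,
\[
\omega(\omega+\mathrm{i}\gamma)\mathbf{J} + \beta^2\nabla(\nabla\cdot\mathbf{J}) = \mathrm{i}\omega\omega_p^2\varepsilon_0\mathbf{E}|_{\Omega_s}.
\]
Because $\mathbf{E}|_{\Omega_s}$ and $\mathbf{J}$ are in $\mathbf{L}^2(\Omega_s)$, this identity places $\nabla(\nabla\cdot\mathbf{J})\in\mathbf{L}^2(\Omega_s)$, hence $\nabla\cdot\mathbf{J}\in H^1(\Omega_s)$. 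Taking the distributional curl of the same identity and using $\nabla\times\nabla\equiv 0$ yields
\[
\nabla\times\mathbf{J} = \frac{\mathrm{i}\omega_p^2\varepsilon_0}{\omega+\mathrm{i}\gamma}\,\nabla\times\mathbf{E}\in\mathbf{L}^2(\Omega_s),
\]
since $\nabla\times\mathbf{E}\in\mathbf{L}^2(\Omega)$ follows from $\mathbf{E}\in X(\Omega)$. Combined with the built-in condition $\mathbf{J}\cdot\mathbf{n}=0$ on $\partial\Omega_s$, this places $\mathbf{J}\in\mathbf{H}(\mathbf{curl};\Omega_s)\cap\mathbf{H}_0(\mathbf{div};\Omega_s)$, and the quoted embedding (Amrouche--Bernardi--Dauge--Girault on Lipschitz polyhedra, and the convex-polyhedron refinement as in Theorem~3.47 of \cite{Monk}) gives both $\mathbf{J}\in\mathbf{H}^{1/2+t}(\Omega_s)$ for all $0\le t<\delta$ and, under convexity, $\mathbf{J}\in\mathbf{H}^1(\Omega_s)$.

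For $\mathbf{E}$ I would test the first line of (\ref{eq:2-4}) against $\mathbf{u}=\nabla\xi$ with $\xi\in H_0^1(\Omega)$: since $\nabla\times\nabla\xi=0$ and $(\nabla\xi)_T=0$ on $\partial\Omega$, only the mass and current terms survive, giving
\[
-\omega^2(\varepsilon\mathbf{E},\nabla\xi) = \mathrm{i}\omega(\overline{\mathbf{J}},\nabla\xi).
\]
The condition $\mathbf{J}\cdot\mathbf{n}=0$ on $\partial\Omega_s$ together with the fact that $\overline{\mathbf{J}}$ vanishes outside $\Omega_s$ shows $\overline{\mathbf{J}}\in\mathbf{H}(\mathbf{div};\Omega)$ with zero normal trace on $\partial\Omega$, so integration by parts on the right-hand side produces $\nabla\cdot(\varepsilon\mathbf{E})=-\mathrm{i}\omega^{-1}\nabla\cdot\overline{\mathbf{J}}\in L^2(\Omega)$. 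When $\varepsilon$ is constant throughout $\Omega$ this reads $\nabla\cdot\mathbf{E}\in L^2(\Omega)$, so $\mathbf{E}\in\mathbf{H}(\mathbf{curl};\Omega)\cap\mathbf{H}(\mathbf{div};\Omega)$ with $\mathbf{E}_T\in\mathbf{L}^2(\partial\Omega)$, and Costabel's $\mathbf{H}^{1/2}$-regularity theorem on Lipschitz domains delivers $\mathbf{E}\in\mathbf{H}^{1/2}(\Omega)$. The principal obstacle is not any single estimate but the bookkeeping of trace conditions: one must verify the vanishing normal trace of $\overline{\mathbf{J}}$ across both $\partial\Omega_s$ and $\partial\Omega$, and must recognize that the constant-$\varepsilon$ hypothesis is essential because a piecewise-constant $\varepsilon$ would introduce a surface charge on $\partial\Omega_s$ that destroys $L^2$-integrability of $\nabla\cdot\mathbf{E}$ and thereby obstructs the global $\mathbf{H}^{1/2}$ embedding.
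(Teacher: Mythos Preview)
Your proposal is correct and follows essentially the same route as the paper: show that $\mathbf{J}\in\mathbf{H}(\mathbf{curl};\Omega_s)\cap\mathbf{H}_0(\mathbf{div};\Omega_s)$ and, under constant $\varepsilon$, that $\mathbf{E}\in\mathbf{H}(\mathbf{div};\Omega)\cap\mathbf{H}_T(\mathbf{curl};\Omega)$, then invoke the standard embedding theorems on Lipschitz (resp.\ convex) polyhedra. The only stylistic difference is that the paper reaches $\nabla\times\mathbf{J}\in\mathbf{L}^2(\Omega_s)$ by testing with $\mathbf{v}=\nabla\times\mathbf{w}$, $\mathbf{w}\in\mathbf{H}_0(\mathbf{curl};\Omega_s)$, and bounding $|(\mathbf{J},\nabla\times\mathbf{w})_s|\le C\|\mathbf{w}\|_{\mathbf{L}^2(\Omega_s)}$, whereas you recover the strong equation and take the distributional curl; your version additionally yields the (unused but correct) bonus $\nabla\cdot\mathbf{J}\in H^1(\Omega_s)$.
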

\begin{proof}
By taking ${\bf v} = \nabla\times{\bf w}$ with ${\bf w}\in \mathbf{H}_{0}(\mathbf{curl};\Omega_{s})$ in the second equation of (\ref{eq:2-4}) and using (\ref{eq:2-5-0}), we get
\begin{equation}\label{eq:2-32}
|\big({\bf J},\,\nabla\times{\bf w}\big)_{s}|\leq C |\big(\nabla\times {\bf E}|_{\Omega_{s}},\,{\bf w}\big)_{s}| \leq C \Vert \nabla\times{\bf E}\Vert_{{\bf L}^{2}(\Omega)} \Vert {\bf w}\Vert_{{\bf L}^{2}(\Omega_{s})}\leq C\Vert {\bf w}\Vert_{{\bf L}^{2}(\Omega_{s})},
\end{equation}
which implies that ${\bf J}\in \mathbf{H}(\mathbf{curl};\Omega_{s})$ and thus ${\bf J}\in  \mathbf{H}_{0}(\mathbf{div};\Omega_{s}) \cap \mathbf{H}(\mathbf{curl};\Omega_{s})$. Applying Theorem~3.50 of \cite{Monk}, we know that there is a $\delta >0$ such that ${\bf J} \in {\bf H}^{\frac{1}{2}+t}(\Omega_{s})$ holds for all $t$ with $0\leq t< \delta$. If $\Omega_{s}$ is a convex polyhedron, then $\mathbf{H}_{0}(\mathbf{div};\Omega_{s}) \cap \mathbf{H}(\mathbf{curl};\Omega_{s})$ is continuously embedded into ${\bf H}^{1}(\Omega_{s})$ (Theorem~3.9 of \cite{Gir}). Therefore we have ${\bf J} \in {\bf H}^{1}(\Omega_{s})$.

If $\varepsilon $ is constant on the whole domain $\Omega$, selecting ${\bf u} = \nabla \xi$ with $\xi\in H_{0}^{1}(\Omega)$ in the first equation of (\ref{eq:2-4}) and applying a similar argument, we find
\begin{equation}
|\big({\bf E},\,\nabla \xi\big)| \leq C \Vert \xi\Vert_{L^{2}(\Omega)}.
\end{equation}
Consequently, we have ${\bf E} \in \mathbf{H}(\mathbf{div};\Omega)\cap \mathbf{H}_{T}(\mathbf{curl};\Omega)$, which yields that ${\bf E} \in {\bf H}^{\frac12}(\Omega)$ (Theorem~3.47 of \cite{Monk}). \qquad \end{proof}

\section{Finite element approximation}\label{sec-4} In this section, we present the finite element approximation for the system (\ref{eq:2-4}) and prove the
convergence of the scheme. Our proof relies on the theory of collective compact operators which has been used to prove the convergence of finite element approximations for Maxwell's equations in Chapter~4 of \cite{Monk}.

Let $\mathcal{T}_{h}$ be a quasiuniform triangulation of $\Omega$ into tetrahedrons of maximal diameter $h$ which matches with the interface $\partial \Omega_{s}$, i.e., both triangulations for $\Omega_{s}$ and $\Omega/\Omega_{s}$ are combined into a standard triangulation of the whole domain $\Omega$. For convenience, we denote by $\mathcal{T}_{h,s}$ the restriction of $\mathcal{T}_{h}$ in the domain $\Omega_{s}$. Let $P_{r}$ be the spaces of polynomials of maximal total degree $r$ and $\widetilde{P}_{r}$ be the spaces of homogeneous polynomials of total degree exactly $r$. We define the finite element space of $H_{0}^{1}(\Omega)$ 
\begin{equation}
S_{h} = \{u_{h} \in H_{0}^{1}(\Omega):\; u_{h}|_{K} \in P_{r}, \;\forall K \in \mathcal{T}_{h}\},
\end{equation}
and the N\'{e}d\'{e}lec $\mathbf{H}_{T}(\mathbf{curl};\Omega)$-conforming and Raviart--Thomas $ \mathbf{H}_{0}(\mathbf{div};\Omega_{s})$-conforming finite element spaces:
\begin{equation}
\begin{array}{lll}
{\displaystyle  {X}_{h} = \{{\bf u}_{h} \in \mathbf{H}_{T}(\mathbf{curl};\Omega): \; {\bf u}_{h}|_{K}  \in ({P}_{r-1})^{3} \oplus \mathcal{R}_{r}, \; \forall \; K \in  \mathcal{T}_{h}\}, }\\[2mm]
{\displaystyle {Y}_{h,s} = \{{\bf u}_{h} \in \mathbf{H}_{0}(\mathbf{div};\Omega_{s}): \; {\bf u}_{h}|_{K}  \in (P_{r-1})^{3}\oplus\widetilde{P}_{r-1}{\bf x}, \; \forall \; K \in  \mathcal{T}_{h,s}\}, }
\end{array}
\end{equation}
where $\mathcal{R}_{r}$ is a subspace of homogeneous vector polynomials of degree $r$
\begin{equation*}
 \mathcal{R}_{r} = \{{\bf u} \in (\widetilde{P}_{r})^{3}: \;{\bf x}\cdot {\bf u} =0 \}.
\end{equation*}
In addition, we use $Q_{h,s}$ and $S_{h,s}$ to denote the degree-$r$ $\mathbf{H}_{0}(\mathbf{curl};\Omega_{s})$-conforming and $H^{1}_{0}(\Omega_{s})$-conforming finite element spaces, respectively.

Given ${\bf g}\in \mathbf{L}^{2}(\partial \Omega)$, we seek to approximate the solution $({\bf E},{\bf J}) \in X(\Omega)\times Y(\Omega_{s})$ of (\ref{eq:2-4}) by finding $({\bf E}_{h},{\bf J}_{h}) \in X_{h}\times Y_{h,s}$ such that the system
\begin{equation}\label{eq:3-1}
\left\{
\begin{array}{@{}l@{}}
{\displaystyle  \big(\mu^{-1}\nabla\times{\bf E}_{h}, \,\nabla\times{\bf u}_{h}) - \omega^{2}(\varepsilon{\bf E}_{h}, \,{\bf u}_{h}) - {\rm i}\omega \langle {\bf E}_{h,T}, {\bf u}_{h,T}\rangle = \langle {\bf g}, {\bf u}_{h,T}\rangle + {\rm i}\omega\big(\overline{\bf J}_{h}, \,{\bf u}_{h}\big) ,}\\[2mm]
 {\displaystyle \beta^{2}\big(\nabla\cdot {\bf J}_{h},\,\nabla\cdot {\bf v}_{h}\big)_{s}-\omega(\omega+{\rm i}\gamma)({\bf J}_{h},\,{\bf v}_{h})_{s} = -{\rm i}\omega \omega^{2}_{p}\varepsilon_{0}\big({{\bf E}_{h}}|_{\Omega_{s}},\,{\bf v}_{h}\big)_{s}} 
\end{array}
\right.
\end{equation}
holds for all $({\bf u}_{h}, {\bf v}_{h}) \in X_{h}\times Y_{h,s}$.

We first show the solvability of the approximate system (\ref{eq:3-1}).
\begin{lemma}\label{lem3-1}
There exists an $h_{0}>0$, such that for $h<h_{0}$, the discrete system (\ref{eq:3-1}) exists a unique solution $({\bf E}_{h},{\bf J}_{h})$.
\end{lemma}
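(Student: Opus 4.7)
The plan is to mirror the continuous analysis of Section 3 at the discrete level and apply the theory of collectively compact operator approximation of Kress/Monk to transfer invertibility from the continuous operator $I+K$ to its discrete analogue.

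First I would set up discrete Helmholtz decompositions analogous to Lemma~3.2:
\[
X_h = X_{0,h} \oplus \nabla S_h, \qquad Y_{h,s} = Y_{0,h,s} \oplus \nabla\times Q_{h,s},
\]
where $X_{0,h}$ collects the ${\bf u}_h\in X_h$ with $(\varepsilon {\bf u}_h,\nabla \xi_h)=0$ for all $\xi_h\in S_h$, and $Y_{0,h,s}$ collects the ${\bf v}_h\in Y_{h,s}$ with $({\bf v}_h,\nabla\times{\bf w}_h)_s=0$ for all ${\bf w}_h\in Q_{h,s}$. The key discrete facts I would invoke are the inclusions $\nabla S_h\subset X_h$ and $\nabla\times Q_{h,s}\subset Y_{h,s}$ built into the N\'ed\'elec/Raviart--Thomas pair, so these decompositions are algebraic; the uniformity in $h$ comes from a discrete Friedrichs-type inequality yielding compact embedding of $X_{0,h}$ (uniformly in $h$) into ${\bf L}^2(\Omega)$ and an analogous statement for $Y_{0,h,s}$, exactly as in Theorem~4.9 of \cite{Monk}.

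Next I would repeat the reformulation that produced (3.14) in the discrete setting: split ${\bf E}_h={\bf E}_{0,h}+\nabla\varphi_h$ and ${\bf J}_h={\bf J}_{0,h}+\nabla\times{\bf A}_h$, and use test functions of the form $\nabla\xi_h$ and $\nabla\times{\bf w}_h$ to deduce the discrete analogues of (3.18) and the mixed problem (3.19). Coercivity of $a_+$ restricted to $X_{0,h}\times Y_{0,h,s}$ (the discrete counterpart of Lemma~3.4), together with the Lax--Milgram theorem, then defines a discrete operator $K_h:{\bf L}^2(\Omega)\times{\bf L}^2(\Omega_s)\to X_{0,h}\times Y_{0,h,s}$ that plays the role of $K$, and a discrete source $\hat{\bf F}_h$ playing the role of $\hat{\bf F}$. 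The discrete problem (3.1) is then equivalent to
\[
(I+K_h)\hat{\bf Z}_h = \hat{\bf F}_h, \qquad \hat{\bf Z}_h=({\bf E}_{0,h},{\bf J}_{0,h}).
\]

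The main obstacle, and the heart of the proof, is to show that the family $\{K_h\}$ is collectively compact and converges pointwise to $K$ on ${\bf L}^2(\Omega)\times{\bf L}^2(\Omega_s)$. Collective compactness will follow from the uniform (in $h$) bound $\|K_h\hat{\bf u}\|_{X(\Omega)\times Y(\Omega_s)}\le C\|\hat{\bf u}\|_{{\bf L}^2\times{\bf L}^2}$ combined with the uniform discrete compact embedding of $X_{0,h}\times Y_{0,h,s}$ mentioned above; pointwise convergence will follow from standard C\'ea-type estimates using the density of $X_h$ in $X(\Omega)$ and $Y_{h,s}$ in $Y(\Omega_s)$, together with convergence of the auxiliary scalar and mixed problems defining $\varphi_h$ and ${\bf A}_h$. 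The technical care here is that the discrete orthogonality defining $X_{0,h}$ differs from the continuous one, so one needs the standard $X_{0,h}$-approximation lemma (analogous to Lemma~4.10 of \cite{Monk}) to approximate elements of $X_0(\Omega)$ by elements of $X_{0,h}$, and similarly for $Y_{0,h,s}$.

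Once collective compactness and pointwise convergence are in place, the Kress/Monk theorem on collectively compact operator approximations (see Chapter~4 of \cite{Monk}) applies: since $I+K$ is invertible by Theorem~\ref{thm:2-1} and Lemma~\ref{lem:2-1}, there exists $h_0>0$ such that $I+K_h$ is invertible with a uniformly bounded inverse for all $h<h_0$. This yields a unique $\hat{\bf Z}_h$ solving the reformulated system, and reconstructing $\varphi_h$ and ${\bf A}_h$ from (the discrete versions of) (3.12) and (3.13) gives the unique $({\bf E}_h,{\bf J}_h)\in X_h\times Y_{h,s}$ solving (3.1). Uniqueness and existence are equivalent in the finite-dimensional system, so this is enough.
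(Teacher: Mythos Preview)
Your proposal is correct, but it takes a substantially heavier route than the paper. The paper's proof of this lemma is only a few lines: since the discrete system is finite-dimensional and linear, it suffices to prove uniqueness; setting ${\bf g}={\bf 0}$ and repeating verbatim the energy argument of Lemma~\ref{lem:2-1} at the discrete level (choose ${\bf u}_h={\bf E}_h$, ${\bf v}_h={\bf J}_h$, take imaginary parts) immediately yields ${\bf J}_h={\bf 0}$ and ${\bf E}_{h,T}={\bf 0}$, so ${\bf E}_h$ satisfies the homogeneous discrete impedance Maxwell problem, which is known from Chapter~4 of \cite{Monk} to have only the trivial solution for $h$ small. By contrast, you rebuild the whole operator-theoretic framework (discrete Helmholtz decompositions, the operator $K_h$, collective compactness, pointwise convergence) in order to transfer invertibility of $I+K$ to $I+K_h$. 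That machinery is precisely what the paper develops afterwards to prove \emph{convergence} of the scheme, but it is not needed just for solvability. The paper's argument buys simplicity and a clean decoupling: the damping $\gamma>0$ kills ${\bf J}_h$ outright, and the remaining question is the already-analyzed discrete Maxwell problem. Your argument, while correct, front-loads the entire Section~4 analysis into this single lemma.
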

\begin{proof}
Since (\ref{eq:3-1}) is a finite-dimensional linear system, it suffices to prove uniqueness. Let ${\bf g} = 0$. By a similar argument as in Lemma~\ref{lem:2-1}, we have ${\bf J}_{h}=0$ and thus
\begin{equation}\label{eq:3-2}
 \big(\mu^{-1}\nabla\times{\bf E}_{h}, \,\nabla\times{\bf u}_{h}) - \omega^{2}(\varepsilon{\bf E}_{h}, \,{\bf u}_{h}) - {\rm i}\omega \langle {\bf E}_{h,T}, {\bf u}_{h,T}\rangle =0,\quad {\rm for}\,\,{\rm all}\,\,{\bf u}_{h}\in X_{h}.
\end{equation}
Since the homogeneous equation (\ref{eq:3-2}) exists a unique solution ${\bf E}_{h}=0$ if $h$ is sufficiently small (Chapter~4 of \cite{Monk}), uniqueness is proved. \qquad \end{proof}

We now give the discrete version of Lemma~\ref{lem:2-2}, i.e., the discrete Helmholtz decompositions.
\begin{lemma}\label{lem3-2}
The finite element spaces $X_{h}$ and $Y_{h,s}$ can be decomposed as 
\begin{equation}\label{eq:3-3}
X_{h} = X_{0,h} \oplus \nabla S_{h},\quad Y_{h,s} = Y_{0,h,s}\oplus \nabla \times Q_{h,s},
\end{equation}
where
\begin{equation}\label{eq:3-4}
\begin{array}{lll}
{\displaystyle X_{0,h} = \{ {\bf u}_{h}\in X_{h} \;\,| \;\,(\varepsilon {\bf u}_{h},\,\nabla \xi_{h})= 0\quad {\rm for}\,\,{\rm all}\,\,\xi_{h}\in S_{h}\}, }\\[2mm]
{\displaystyle Y_{0,h,s} = \{ {\bf v}_{h}\in Y_{h,s} \;\,| \;\,( {\bf v}_{h},\,\nabla \times {\bf w}_{h})_{s}= 0\quad {\rm for}\,\,{\rm all}\,\,{\bf w}_{h}\in Q_{h,s}\}. }
\end{array}
\end{equation}
\end{lemma}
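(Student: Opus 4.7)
The plan is to establish both decompositions as orthogonal direct sums via two inclusions coming from the discrete de Rham sequence, followed by the projection theorem in finite dimensions.

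First, I would verify the containments $\nabla S_{h}\subset X_{h}$ and $\nabla\times Q_{h,s}\subset Y_{h,s}$. For the former, any $\xi_{h}\in S_{h}$ vanishes on $\partial\Omega$, so $\nabla \xi_{h}$ has vanishing tangential trace and thus lies in $\mathbf{H}_{0}(\mathbf{curl};\Omega)\subset\mathbf{H}_{T}(\mathbf{curl};\Omega)$; locally $\nabla P_{r}\subset(P_{r-1})^{3}\subset (P_{r-1})^{3}\oplus\mathcal{R}_{r}$. For the latter, $\mathbf{w}_{h}\in Q_{h,s}$ satisfies $\mathbf{w}_{h}\times\mathbf{n}=0$ on $\partial\Omega_{s}$, which via the standard tangential-boundary identity yields $(\nabla\times\mathbf{w}_{h})\cdot\mathbf{n}=0$ on $\partial\Omega_{s}$, while the element-local inclusion $\nabla\times((P_{r-1})^{3}\oplus\mathcal{R}_{r})\subset(P_{r-1})^{3}\oplus\widetilde{P}_{r-1}\mathbf{x}$ is a well-known property of the N\'{e}d\'{e}lec--Raviart--Thomas pair (see \cite{Monk}). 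Hence $\nabla S_{h}$ and $\nabla\times Q_{h,s}$ are finite-dimensional, thus closed, subspaces of $X_{h}$ and $Y_{h,s}$ respectively.

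With these inclusions, the sets $X_{0,h}$ and $Y_{0,h,s}$ defined in (\ref{eq:3-4}) are precisely the orthogonal complements of $\nabla S_{h}$ in $X_{h}$ under the $\varepsilon$-weighted $\mathbf{L}^{2}$ inner product, and of $\nabla\times Q_{h,s}$ in $Y_{h,s}$ under the standard $\mathbf{L}^{2}(\Omega_{s})$ inner product. For any $\mathbf{u}_{h}\in X_{h}$, I would solve the symmetric positive-definite finite-dimensional system $(\varepsilon\nabla\xi_{h},\nabla\phi_{h})=(\varepsilon\mathbf{u}_{h},\nabla\phi_{h})$ for all $\phi_{h}\in S_{h}$---coercivity following from the positivity of $\varepsilon$ and the Poincar\'{e} inequality on $S_{h}\subset H_{0}^{1}(\Omega)$---and set $\mathbf{u}_{0,h}:=\mathbf{u}_{h}-\nabla\xi_{h}$, which lies in $X_{0,h}$ by construction. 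Directness is immediate: any $\nabla\xi_{h}\in X_{0,h}$ satisfies $(\varepsilon\nabla\xi_{h},\nabla\xi_{h})=0$ and therefore vanishes.

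The decomposition of $Y_{h,s}$ proceeds analogously, but with the extra wrinkle that $\nabla\times$ on $Q_{h,s}$ has the nontrivial kernel $\nabla S_{h,s}$, so the problem of producing a potential $\mathbf{w}_{h}\in Q_{h,s}$ whose curl equals the $\mathbf{L}^{2}$-projection of $\mathbf{v}_{h}$ onto $\nabla\times Q_{h,s}$ is ill-posed without a gauge. This is the main technical obstacle. I would address it by imposing a discrete Coulomb gauge, solving the problem on the discretely divergence-free subspace of $Q_{h,s}$ where the discrete Poincar\'{e}--Friedrichs inequality renders $(\nabla\times\cdot,\nabla\times\cdot)_{s}$ coercive and delivers a unique $\mathbf{w}_{h}$. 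Setting $\mathbf{v}_{0,h}:=\mathbf{v}_{h}-\nabla\times\mathbf{w}_{h}$ places $\mathbf{v}_{0,h}$ in $Y_{0,h,s}$, and directness follows by testing any element of $Y_{0,h,s}\cap\nabla\times Q_{h,s}$, written as $\nabla\times\mathbf{w}_{h}$, against $\mathbf{z}_{h}=\mathbf{w}_{h}$ in the defining orthogonality to obtain $\|\nabla\times\mathbf{w}_{h}\|_{\mathbf{L}^{2}(\Omega_{s})}^{2}=0$.
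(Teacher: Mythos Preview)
Your proposal is correct and follows exactly the paper's approach: the paper's proof is a single line invoking the projection theorem together with the inclusions $\nabla S_{h}\subset X_{h}$ and $\nabla\times Q_{h,s}\subset Y_{h,s}$, and you have simply fleshed out those two ingredients. One remark: what you call ``the main technical obstacle'' is not actually an obstacle for the lemma as stated, since the decomposition only requires orthogonal projection onto the closed subspace $\nabla\times Q_{h,s}$ and never a unique choice of potential $\mathbf{w}_{h}$; your gauge-fixing argument is correct but superfluous here, becoming relevant only afterwards when the paper selects a discretely divergence-free $\mathbf{A}_{h}$.
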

Lemma~\ref{lem3-2} follows from the projection theorem and the fact that $\nabla S_{h}$ and $\nabla \times Q_{h,s}$ are respectively the subspaces of $X_{h}$ and $Y_{h,s}$.

In virtue of Lemma~\ref{lem3-2}, we can write the solution $({\bf E}_{h}, {\bf J}_{h})$ of (\ref{eq:3-1}) as 
\begin{equation}\label{eq:3-5}
{\bf E}_{h} = {\bf E}_{0,h} + \nabla \varphi_{h}, \quad {\bf J}_{h} = {\bf J}_{0,h} + \nabla\times{\bf A}_{h}
\end{equation}
for some $({\bf E}_{0,h},\,{\bf J}_{0,h})\in X_{0,h} \times Y_{0,h,s}$ and $(\varphi_{h},\,{\bf A}_{h})\in S_{h} \times Q_{h,s}$. In addition, we assume that ${\bf A}_{h}$ is discrete divergence-free, i.e.,
\begin{equation}
({\bf A}_{h},\nabla \eta_{h})_{s} = 0, \quad {\rm for}\,\,{\rm all}\,\,{\eta_{h}} \in S_{h,s}.
\end{equation}
Substituting (\ref{eq:3-5}) into (\ref{eq:3-1}) and selecting $({\bf u}_{h}, {\bf v}_{h}) = (\nabla \xi_{h}, \,\nabla \times{\bf w}_{h})$, where $ \xi_{h} \in S_{h}$ and ${\bf w} _{h}\in Q_{h,s}$, we obtain
\begin{equation}\label{eq:3-6}
-\omega^{2}\big(\varepsilon \nabla \varphi_{h},\,\nabla\xi_{h}\big) = {\rm i}\omega\big(\overline{\bf J}_{0,h}, \,\nabla\xi_{h}), \quad {\rm for}\,\,{\rm all}\,\,\xi_{h} \in S_{h},
\end{equation}
and
\begin{equation}\label{eq:3-7}
\left\{
\begin{array}{@{}l@{}}
{\displaystyle -\omega(\omega+{\rm i}\gamma)\big( \nabla\times{\bf A}_{h},\, \nabla \times{\bf w}_{h}\big)_{s} =  -{\rm i}\omega \omega^{2}_{p}\varepsilon_{0}\big({\bf E}_{0,h} |_{\Omega_{s}},\,\nabla \times{\bf w}_{h}\big)_{s}, \;\; {\rm for}\,\,{\rm all}\,\,{\bf w}_{h}\in Q_{h,s},}\\[2mm]
{\displaystyle \big({\bf A}_{h},\, \nabla \eta_{h})_{s} = 0, \quad {\rm for}\,\,{\rm all}\,\, \eta_{h} \in S_{h,s},}
\end{array}
\right.
\end{equation}
Analogous to (\ref{eq:2-13-0}), we can introduce a Lagrangian multiplier $q_{h}\in S_{h,s}$ and rewrite (\ref{eq:3-7}) as 
\begin{equation}\label{eq:3-8}
\left\{
\begin{array}{@{}l@{}}
{\displaystyle \omega(\omega+{\rm i}\gamma)\big( \nabla\times{\bf A}_{h},\, \nabla \times{\bf w}_{h}\big)_{s} + (\nabla q_{h},{\bf w}_{h})_{s}=  {\rm i}\omega \omega^{2}_{p}\varepsilon_{0}\big({\bf E}_{0,h} |_{\Omega_{s}},\,\nabla \times{\bf w}_{h}\big)_{s}, }\\[2mm]
{\displaystyle\qquad  \qquad \qquad \qquad \qquad \qquad \qquad \qquad \qquad \qquad \qquad \qquad {\rm for}\,\,{\rm all}\,\,{\bf w}_{h}\in Q_{h,s},}\\[0.5mm]
{\displaystyle \big({\bf A}_{h},\, \nabla \eta_{h})_{s} = 0, \quad {\rm for}\,\,{\rm all}\,\, \eta_{h} \in S_{h,s}.}
\end{array}
\right.
\end{equation}
Next taking $({\bf u}_{h}, {\bf v}_{h}) \in  X_{0,h} \times Y_{0,h,s}$ in (\ref{eq:3-1}) and using (\ref{eq:3-5}), we have 
\begin{equation}\label{eq:3-9}
\left\{
\begin{array}{@{}l@{}}
{\displaystyle  \big(\mu^{-1}\nabla\times{\bf E}_{0,h}, \,\nabla\times{\bf u}_{h}) - \omega^{2}(\varepsilon{\bf E}_{0,h}, \,{\bf u}_{h}) - {\rm i}\omega \langle {\bf E}_{0,h,T}, {\bf u}_{h,T}\rangle = \langle {\bf g}, {\bf u}_{h,T}\rangle  }\\[2mm]
{\displaystyle \qquad \qquad \qquad \qquad \qquad \qquad \qquad \qquad \quad \quad +\, {\rm i}\omega\big(\overline{{\bf J}_{0,h} + \nabla\times{\bf A}}_{h}, \,{\bf u}_{h}}\big), \\[2mm]
 {\displaystyle -\omega(\omega+{\rm i}\gamma)\big({\bf J}_{0,h} ,\,{\bf v}_{h}\big)_{s} + \beta^{2}\big(\nabla\cdot {\bf J}_{0,h},\,\nabla\cdot {\bf v}_{h}\big)_{s} = -{\rm i}\omega \omega^{2}_{p}\varepsilon_{0}\big(({\bf E}_{0,h} + \nabla \varphi_{h})|_{\Omega_{s}},\,{\bf v}_{h}\big)_{s},}
\end{array}
\right.
\end{equation}
where $\varphi_{h}$ and ${\bf A}_{h}$ satisfy (\ref{eq:3-6}) and (\ref{eq:3-8}), respectively. Paralleling the analysis of (\ref{eq:2-14}) in section~\ref{sec-2}, we recall the sesquilinear form $a_{+}$ given by (\ref{eq:2-15}) and define the discrete operator $K_{h}$: ${\bf L}^{2}(\Omega)\times{\bf L}^{2}(\Omega_{s})\rightarrow {\bf L}^{2}(\Omega)\times{\bf L}^{2}(\Omega_{s})$ such that if $\hat{{\bf u}} = ({\bf u}_{1},{\bf u}_{2}) \in {\bf L}^{2}(\Omega)\times{\bf L}^{2}(\Omega_{s})$ then $K_{h}\hat{{\bf u}} \in X_{0,h} \times Y_{0,h,s}\subset{\bf L}^{2}(\Omega)\times{\bf L}^{2}(\Omega_{s})$ satisfies
\begin{equation}\label{eq:3-10}
\begin{array}{lll}
{\displaystyle  a_{+}(K_{h}\hat{{\bf u}}, \,\hat{{\bf v}}_{h}) = -2\omega^{2}(\varepsilon{\bf u}_{1}, \,{\bf v}_{1,h}) -2\omega^{2}\big({\bf u}_{2} ,\,{\bf v}_{2,h}\big)_{s}-{\rm i}\omega \big(\overline{\bf u}_{2}, \,{\bf v}_{1,h}\big) }\\[2mm]
{\displaystyle \;+\,{\rm i}\omega \omega^{2}_{p}\varepsilon_{0}\big({\bf u}_{1}|_{\Omega_{s}},\,{\bf v}_{2,h}\big)_{s}- {\rm i}\omega\big(\overline{\nabla\times{\bf A}}_{h}, \,{\bf v}_{1,h}\big) + {\rm i}\omega \omega^{2}_{p}\varepsilon_{0}\big((\nabla \varphi_{h})|_{\Omega_{s}},\,{\bf v}_{2,h}\big)_{s}}
\end{array}
\end{equation}
${\rm for}\,\, {\rm all}\,\,\hat{{\bf v}}_{h} = ({\bf v}_{1,h},{\bf v}_{2,h}) \in X_{0,h}\times Y_{0,h,s}$, where $\varphi_{h} \in S_{h}$ and ${\bf A}_{h}\in Q_{h,s}$ are the solutions of 
\begin{equation}\label{eq:3-11}
-\omega^{2}\big(\varepsilon \nabla \varphi_{h},\,\nabla\xi_{h}\big) = {\rm i}\omega\big(\overline{\bf u}_{2}, \,\nabla\xi_{h}), \quad {\rm for}\,\,{\rm all}\,\,\xi_{h} \in S_{h},
\end{equation}
and 
\begin{equation}\label{eq:3-12}
\left\{
\begin{array}{@{}l@{}}
{\displaystyle \omega(\omega+{\rm i}\gamma)\big( \nabla\times{\bf A}_{h},\, \nabla \times{\bf w}_{h}\big)_{s} + \big(\nabla q_{h}, \, {\bf w}_{h}\big)_{s}=  {\rm i}\omega \omega^{2}_{p}\varepsilon_{0}\big({\bf u}_{1} |_{\Omega_{s}},\,\nabla \times{\bf w}_{h}\big)_{s}, }\\[2mm]
{\displaystyle\qquad  \qquad \qquad \qquad \qquad \qquad \qquad \qquad \qquad \qquad \qquad \qquad {\rm for}\,\,{\rm all}\,\,{\bf w}_{h}\in Q_{h,s},}\\[2mm]
{\displaystyle \big({\bf A}_{h},\, \nabla \eta_{h})_{s} = 0, \quad {\rm for}\,\,{\rm all}\,\, \eta_{h} \in S_{h,s}.}
\end{array}
\right.
\end{equation}
Similarly, we define a function $\hat{{\bf F}}_{h} = ({\bf F}_{1,h}, {\bf F}_{2,h}) \in  X_{0,h}\times Y_{0,h,s}$
which satisfies
\begin{equation}\label{eq:3-13}
a_{+}(\hat{{\bf F}}_{h}, \,\hat{{\bf v}}_{h}) =  \langle {\bf g}, {\bf v}_{1,h,T}\rangle, \quad {\rm for}\,\, {\rm all}\,\,\hat{{\bf v}}_{h} = ({\bf v}_{1,h},{\bf v}_{2,h}) \in X_{0,h}\times Y_{0,h,s}.
\end{equation}
Using the Lax--Milgram theorem as in section~\ref{sec-2}, we find that $K_{h}$ and ${\bf F}_{h}$ are well defined. Now we can write the system (\ref{eq:3-9}) in the form of (\ref{eq:2-27}). Find $\hat{{\bf Z}}_{h} = ({\bf E}_{0,h},\,{\bf J}_{0,h})\in X_{0,h} \times Y_{0,h,s}$ such that 
\begin{equation}\label{eq:3-14}
(I+K_{h})\hat{{\bf Z}}_{h} = \hat{{\bf F}}_{h}.
\end{equation}
In the rest of this section, we use the theory of collectively compact operators to prove the convergence of $\hat{{\bf Z}}_{h}$ to $\hat{{\bf Z}}$ as $h\rightarrow 0$, where $\hat{{\bf Z}}$ is the solution of (\ref{eq:2-27}). To this end, we need to verify the pointwise convergence of $K_{h}\hat{{\bf u}}$ to $K\hat{{\bf u}}$ in ${\bf L}^{2}(\Omega)\times{\bf L}^{2}(\Omega_{s})$ and the collective compactness of $\{K_{h}\}$ (see Chapter 2 and 7 of \cite{Monk}).
\subsection{Pointwise convergence}
In this part, we verify the pointwise convergence of $K_{h}\hat{{\bf u}}$ to $K\hat{{\bf u}}$ in ${\bf L}^{2}(\Omega)\times{\bf L}^{2}(\Omega_{s})$. We first give a useful lemma concerning the density of the finite element spaces $X_{h}$, $S_{h}$, $Y_{h,s}$, and $Q_{h,s}$.
\begin{lemma}\label{lem3-3}
The space $X_{h}$ is dense in $X(\Omega)$ in the sense that for any ${\bf u}$ in $X(\Omega)$,
\begin{equation}
\lim_{h\to 0} \inf_{{\bf u}_{h}\in X_{h}} \Vert {\bf u}_{h} - {\bf u}\Vert_{X(\Omega)} = 0.
\end{equation}
Similarly, $S_{h}$, $Y_{h,s}$ and $Q_{h,s}$ are dense in $H_{0}^{1}(\Omega)$, $Y(\Omega_{s})$, and $\mathbf{H}_{0}(\mathbf{curl};\Omega_{s})$, respectively.
\end{lemma}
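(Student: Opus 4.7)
The plan is to prove each of the four density statements by the standard two-step scheme: first approximate an arbitrary element of the continuous space by a smooth function (using classical density results), and then apply the canonical finite element interpolation operator to the smooth function and invoke its known convergence rate. The four cases are combined at the end by the triangle inequality.

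For $S_h$ dense in $H^1_0(\Omega)$, I would use density of $C_c^\infty(\Omega)$ in $H^1_0(\Omega)$, then apply the Scott--Zhang interpolant $\pi_h : H^1_0(\Omega) \to S_h$, which is $H^1$-stable, preserves the zero trace, and satisfies $\|u-\pi_h u\|_{H^1(\Omega)} \to 0$ for every smooth $u$. For $X_h$ dense in $X(\Omega)=\mathbf{H}_T(\mathbf{curl};\Omega)$, I would show first that $(C^\infty(\bar\Omega))^3$ is dense in $\mathbf{H}_T(\mathbf{curl};\Omega)$ (this is a standard regularization argument using a smooth extension to a slightly larger domain and mollification) and then invoke the N\'ed\'elec edge interpolant $r_h$, whose convergence in the $\mathbf{H}(\mathbf{curl};\Omega)$ norm for smooth vector fields is given by the classical Monk estimates. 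For $Y_{h,s}$ dense in $\mathbf{H}_0(\mathbf{div};\Omega_s)$ and $Q_{h,s}$ dense in $\mathbf{H}_0(\mathbf{curl};\Omega_s)$, the same scheme is applied with the Raviart--Thomas and N\'ed\'elec interpolants, respectively, both of which preserve the relevant homogeneous boundary condition (zero normal or zero tangential trace) because the canonical degrees of freedom on boundary faces/edges vanish on the test smooth function.

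The main obstacle is the extra tangential-trace contribution $\|{\bf u}_T-(r_h{\bf u})_T\|_{\mathbf{L}^2(\partial\Omega)}$ present in the $X(\Omega)$ norm, which is not immediate from the standard $\mathbf{H}(\mathbf{curl};\Omega)$ interpolation theory. To handle this I would exploit the fact that on each boundary face $F \subset \partial\Omega$ the tangential trace $(r_h {\bf u})_T|_F$ coincides with the two-dimensional N\'ed\'elec interpolant of ${\bf u}_T|_F$ in the surface finite element space induced by the trace of the mesh. For smooth ${\bf u}$ the surface interpolant converges to ${\bf u}_T$ in $L^2(F)$ at the expected rate, and summing over faces yields the desired convergence on $\partial\Omega$.

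Once convergence of each interpolant on smooth functions is available, density follows by the usual triangle inequality argument: given any ${\bf u}$ in the continuous space and any $\varepsilon>0$, pick a smooth $\tilde{\bf u}$ with $\|{\bf u}-\tilde{\bf u}\| < \varepsilon/2$ in the relevant graph norm, and for $h$ small enough $\|\tilde{\bf u}-I_h\tilde{\bf u}\| < \varepsilon/2$, where $I_h$ is the appropriate interpolant. All of these ingredients are essentially contained in Chapter~5 of \cite{Monk} and Chapter~3 of \cite{Gir}, so in the write-up I would state the plan briefly and cite these references for the technical estimates rather than redo the lengthy interpolation calculations.
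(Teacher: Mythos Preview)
Your proposal is correct and takes essentially the same approach as the paper, which does not give a detailed proof but simply states that the result follows from the properties of the finite element interpolation operators combined with the density of smooth functions in the continuous spaces, referring to Lemma~7.10 of \cite{Monk}. One minor note: since the paper sets $\|{\bf u}\|_{X(\Omega)} = \|{\bf u}\|_{\mathbf{H}(\mathbf{curl};\Omega)}$ without the boundary trace term, the extra work you propose for controlling $\|{\bf u}_T-(r_h{\bf u})_T\|_{\mathbf{L}^2(\partial\Omega)}$ is not strictly required here, though it does no harm.
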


This lemma can be proved by using the properties of interpolation operators in these finite element spaces and the density of smooth functions in $X(\Omega)$ and other spaces. For more details, see Lemma~7.10 of \cite{Monk}.

\begin{theorem}\label{thm:3-1}
For any function $\hat{{\bf u}}\in {\bf L}^{2}(\Omega)\times{\bf L}^{2}(\Omega_{s})$, we have 
\begin{equation*}
\Vert (K-K_{h})\hat{{\bf u}} \Vert_{X(\Omega)\times Y(\Omega_{s})} \rightarrow 0,\quad {\rm as}\;\;h\rightarrow 0. 
\end{equation*}
\end{theorem}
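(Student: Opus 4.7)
The proof follows the collectively compact operator framework of \cite{Monk}, adapted to the coupled setting. Fix $\hat{{\bf u}}=({\bf u}_{1},{\bf u}_{2})\in {\bf L}^{2}(\Omega)\times{\bf L}^{2}(\Omega_{s})$ and write $\hat{{\bf w}}=K\hat{{\bf u}}$, $\hat{{\bf w}}_{h}=K_{h}\hat{{\bf u}}$. The first step is to establish convergence of the auxiliary potentials driving the right-hand sides of (\ref{eq:2-17}) and (\ref{eq:3-10}): C\'ea's lemma applied to the coercive elliptic pair (\ref{eq:2-18})/(\ref{eq:3-11}), together with the density of $S_{h}$ in $H^{1}_{0}(\Omega)$ from Lemma~\ref{lem3-3}, yields $\nabla\varphi_{h}\to\nabla\varphi$ in ${\bf L}^{2}(\Omega)$. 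The analogous statement $\nabla\times{\bf A}_{h}\to\nabla\times{\bf A}$ in ${\bf L}^{2}(\Omega_{s})$ follows from standard mixed finite element theory for (\ref{eq:2-19})/(\ref{eq:3-12}) once one verifies a uniform discrete inf--sup condition for the pair $(Q_{h,s},S_{h,s})$, which is a classical property of the N\'ed\'elec--Lagrange element combination, together with the density of $Q_{h,s}$ in $\mathbf{H}_{0}(\mathbf{curl};\Omega_{s})$ from Lemma~\ref{lem3-3}.

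Next I would construct a discrete approximation $\hat{{\bf w}}_{h}^{*}\in X_{0,h}\times Y_{0,h,s}$ of $\hat{{\bf w}}$ that converges in $X(\Omega)\times Y(\Omega_{s})$. By Lemma~\ref{lem3-3}, pick $\tilde{{\bf w}}_{h}=(\tilde{{\bf w}}_{1,h},\tilde{{\bf w}}_{2,h})\in X_{h}\times Y_{h,s}$ with $\tilde{{\bf w}}_{h}\to\hat{{\bf w}}$ in $X(\Omega)\times Y(\Omega_{s})$, and use the discrete Helmholtz decomposition of Lemma~\ref{lem3-2} to write $\tilde{{\bf w}}_{h}=\hat{{\bf w}}_{h}^{*}+(\nabla\zeta_{h},\nabla\times\bm{\psi}_{h})$ with $\hat{{\bf w}}_{h}^{*}\in X_{0,h}\times Y_{0,h,s}$, $\zeta_{h}\in S_{h}$ and $\bm{\psi}_{h}\in Q_{h,s}$. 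Because $\hat{{\bf w}}=({\bf w}_{1},{\bf w}_{2})\in X_{0}(\Omega)\times Y_{0}(\Omega_{s})$ satisfies $(\varepsilon{\bf w}_{1},\nabla\xi)=0$ and $({\bf w}_{2},\nabla\times{\bf r})_{s}=0$ for all $\xi\in H^{1}_{0}(\Omega)$ and ${\bf r}\in\mathbf{H}_{0}(\mathbf{curl};\Omega_{s})$, testing the identities defining $\zeta_{h}$ and $\bm{\psi}_{h}$ against themselves yields
\begin{equation*}
\|\nabla\zeta_{h}\|_{{\bf L}^{2}(\Omega)}\le C\|\tilde{{\bf w}}_{1,h}-{\bf w}_{1}\|_{{\bf L}^{2}(\Omega)},\qquad \|\nabla\times\bm{\psi}_{h}\|_{{\bf L}^{2}(\Omega_{s})}\le C\|\tilde{{\bf w}}_{2,h}-{\bf w}_{2}\|_{{\bf L}^{2}(\Omega_{s})},
\end{equation*}
so the spurious gradient and curl parts vanish in the limit and $\hat{{\bf w}}_{h}^{*}\to\hat{{\bf w}}$ in $X(\Omega)\times Y(\Omega_{s})$.

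Finally, set $\hat{{\bf e}}_{h}:=\hat{{\bf w}}_{h}-\hat{{\bf w}}_{h}^{*}\in X_{0,h}\times Y_{0,h,s}$. The coercivity of $a_{+}$ from Lemma~\ref{lem:2-4} gives $\|\hat{{\bf e}}_{h}\|_{X\times Y}^{2}\le C|a_{+}(\hat{{\bf e}}_{h},\hat{{\bf e}}_{h})|$. Inserting (\ref{eq:3-10}) to evaluate $a_{+}(\hat{{\bf w}}_{h},\hat{{\bf e}}_{h})$ and then subtracting $a_{+}(\hat{{\bf w}},\hat{{\bf e}}_{h})$, with the latter computed by splitting $\hat{{\bf e}}_{h}$ via the continuous Helmholtz decomposition of Lemma~\ref{lem:2-2} and applying (\ref{eq:2-17}) only on its $X_{0}(\Omega)\times Y_{0}(\Omega_{s})$ component while evaluating the residual gradient/curl piece directly, produces an estimate of the form
\begin{equation*}
\|\hat{{\bf e}}_{h}\|_{X\times Y}\le C\bigl(\|\hat{{\bf w}}-\hat{{\bf w}}_{h}^{*}\|_{X\times Y}+\|\nabla(\varphi-\varphi_{h})\|_{{\bf L}^{2}(\Omega)}+\|\nabla\times({\bf A}-{\bf A}_{h})\|_{{\bf L}^{2}(\Omega_{s})}\bigr).
\end{equation*}
Each term on the right tends to zero by the two previous steps, and a triangle inequality delivers $\hat{{\bf w}}_{h}\to\hat{{\bf w}}$ in $X(\Omega)\times Y(\Omega_{s})$. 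I expect the main obstacle to be the non-nestedness $X_{0,h}\not\subset X_{0}$ and $Y_{0,h,s}\not\subset Y_{0}$, which prevents any direct Galerkin-orthogonality argument and forces the detour through both the continuous and the discrete Helmholtz decompositions; the convergence of the auxiliary potentials $\varphi_{h}$ and ${\bf A}_{h}$ established in the first step is precisely what is needed to control the consistency error created by this detour.
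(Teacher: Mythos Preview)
Your first two steps are fine and in fact match the paper: convergence of $\nabla\varphi_h\to\nabla\varphi$ and $\nabla\times{\bf A}_h\to\nabla\times{\bf A}$ follows exactly as you say, and your construction of $\hat{{\bf w}}_h^{*}\in X_{0,h}\times Y_{0,h,s}$ with $\hat{{\bf w}}_h^{*}\to\hat{{\bf w}}$ is correct.

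The gap is in step~3. Write $\ell(\cdot)$ and $\ell_h(\cdot)$ for the right-hand sides of (\ref{eq:2-17}) and (\ref{eq:3-10}) (the formulas make sense on all of $X\times Y$). Your computation gives
\[
a_{+}(\hat{{\bf w}}_h-\hat{{\bf w}},\hat{{\bf e}}_h)=\ell_h(\hat{{\bf e}}_h)-\ell(\hat{{\bf e}}_{h,0})
=[\ell_h-\ell](\hat{{\bf e}}_h)+\ell\bigl((\nabla\sigma,\nabla\times\bm\tau)\bigr),
\]
where $(\nabla\sigma,\nabla\times\bm\tau)=\hat{{\bf e}}_h-\hat{{\bf e}}_{h,0}$ is the continuous gradient/curl part of $\hat{{\bf e}}_h$. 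The bracket $[\ell_h-\ell](\hat{{\bf e}}_h)$ yields the two potential terms you list, but the last term contains contributions such as $-2\omega^{2}(\varepsilon{\bf u}_{1},\nabla\sigma)$ and $-2\omega^{2}({\bf u}_{2},\nabla\times\bm\tau)_{s}$ which are of size $C\|\hat{{\bf u}}\|\,\|\hat{{\bf e}}_h\|$ and do \emph{not} vanish: $\hat{{\bf e}}_h\in X_{0,h}\times Y_{0,h,s}$ is only \emph{discretely} divergence/curl-free, so $\|\nabla\sigma\|$ and $\|\nabla\times\bm\tau\|$ are comparable to $\|\hat{{\bf e}}_h\|$, not small. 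Your displayed estimate therefore does not follow from the argument you sketch, and the non-nestedness obstacle you identify is not actually overcome.

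The paper closes this gap by lifting the problems to mixed formulations on all of $X\times Y$ and $X_h\times Y_{h,s}$ with Lagrange multipliers $\hat{\bm\phi}\in\nabla H^1_0(\Omega)\times\nabla\times{\bf H}_0({\bf curl};\Omega_s)$ and $\hat{\bm\phi}_h$. The point is that, from the mixed equation, $\ell\bigl((\nabla\sigma,\nabla\times\bm\tau)\bigr)=(\varepsilon{\bf e}_{1,h},\bm\phi_1)+({\bf e}_{2,h},\bm\phi_2)_s$, and since $\hat{{\bf e}}_h$ is orthogonal to $\nabla S_h\times\nabla\times Q_{h,s}$ one may subtract any discrete $\hat{\bm\eta}_h$ to get a bound $C\|\hat{{\bf e}}_h\|\inf_{\hat{\bm\eta}_h}\|\hat{\bm\phi}-\hat{\bm\eta}_h\|$. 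This extra best-approximation term for $\hat{\bm\phi}$ is precisely the missing ingredient; it appears explicitly in the paper's final estimate (\ref{eq:3-25}). If you incorporate the Lagrange multiplier into your scheme, your step~3 can be repaired along these lines.
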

\begin{proof}
First we rewrite the variational problem (\ref{eq:2-17}) for $K$ as the mixed formulation. Given $\hat{{\bf u}} \in {\bf L}^{2}(\Omega)\times{\bf L}^{2}(\Omega_{s})$, find $K\hat{{\bf u}} \in X(\Omega)\times Y(\Omega_{s})$ and $\hat{{\bm \phi}} = ({\bm \phi}_{1}, {\bm \phi}_{2})\in \nabla H_{0}^{1}(\Omega) \times (\nabla \times \mathbf{H}_{0}(\mathbf{curl};\Omega_{s}))$ such that 
\begin{equation}\label{eq:3-15}
\left\{
\begin{array}{lll}
{\displaystyle  a_{+}(K\hat{{\bf u}}, \,\hat{{\bf v}}) + (\varepsilon {\bf v}_{1}, {\bm \phi}_{1}) + ({\bf v}_{2}, {\bm \phi}_{2}) = -2\omega^{2}(\varepsilon{\bf u}_{1}, \,{\bf v}_{1}) -2\omega^{2}\big({\bf u}_{2} ,\,{\bf v}_{2}\big)_{s} -{\rm i}\omega \big(\overline{\bf u}_{2}, \,{\bf v}_{1}\big) }\\[2mm]
{\displaystyle\quad +\,{\rm i}\omega \omega^{2}_{p}\varepsilon_{0}\big({\bf u}_{1}|_{\Omega_{s}},\,{\bf v}_{2}\big)_{s}-{\rm i}\omega\big(\overline{\nabla\times{\bf A}}, \,{\bf v}_{1}\big) + {\rm i}\omega \omega^{2}_{p}\varepsilon_{0}\big((\nabla \varphi)|_{\Omega_{s}},\,{\bf v}_{2}\big)_{s},}\\[2mm]
{\displaystyle \quad \quad  \qquad \qquad \qquad \quad \quad  \qquad \qquad \qquad \qquad {\rm for}\,\, {\rm all}\,\,\hat{{\bf v}} = ({\bf v}_{1},{\bf v}_{2}) \in X(\Omega) \times Y(\Omega_{s}), }\\[2mm]
{\displaystyle (\varepsilon K{\bf u}_{1}, \,{\bm \xi}_{1}) + (K{\bf u}_{2}, \,{\bm \xi}_{2}) = 0, \quad  {\rm for}\,\, {\rm all}\,\,({\bm \xi}_{1},{\bm \xi}_{2}) \in \nabla H_{0}^{1}(\Omega) \times (\nabla \times \mathbf{H}_{0}(\mathbf{curl};\Omega_{s})), }
\end{array}
\right.
\end{equation}
where $\varphi$ and ${\bf A}$ are the solutions of (\ref{eq:2-18}) and (\ref{eq:2-19}), respectively. Since $\nabla H_{0}^{1}(\Omega)$ and $\nabla \times \mathbf{H}_{0}(\mathbf{curl};\Omega_{s})$ are closed subspaces of $X(\Omega)$ and $Y(\Omega_{s})$, respectively, by taking $({\bf v}_{1},{\bf v}_{2}) = ({\bm \phi}_{1}, {\bm \phi}_{2})$ and using the fact that $\nabla \times {\bm \phi}_{1} = {\bf 0}$ and $\nabla\cdot {\bm \phi}_{2} = 0$, we have 
\begin{equation}\label{eq:3-16}
|  (\varepsilon {\bf v}_{1}, {\bm \phi}_{1}) + ({\bf v}_{2}, {\bm \phi}_{2}) | = |  (\varepsilon {\bm \phi}_{1}, {\bm \phi}_{1}) + ({\bm \phi}_{2}, {\bm \phi}_{2})| \geq C \Vert \hat{{\bm \phi}} \Vert^{2}_{X(\Omega)\times Y(\Omega_{s})},
\end{equation}
which implies the inf-sup condition.

Similarly, the discrete equation (\ref{eq:3-10}) for $K_{h}$ can be reformulated as the mixed finite element problem. Find $K_{h}\hat{{\bf u}} \in X_{h}\times Y_{h,s}$ and $\hat{{\bm \phi}}_{h} = ({\bm \phi}_{1,h}, {\bm \phi}_{2,h})\in \nabla S_{h} \times (\nabla \times Q_{h,s})$ such that 
\begin{equation}\label{eq:3-17}
\left\{
\begin{array}{lll}
{\displaystyle  a_{+}(K_{h}\hat{{\bf u}}, \,\hat{{\bf v}}_{h}) + (\varepsilon {\bf v}_{1,h}, {\bm \phi}_{1,h}) + ({\bf v}_{2,h}, {\bm \phi}_{2,h}) = -2\omega^{2}(\varepsilon{\bf u}_{1}, \,{\bf v}_{1,h}) -2\omega^{2}\big({\bf u}_{2} ,\,{\bf v}_{2,h}\big)_{s} }\\[2mm]
{\displaystyle \quad -\,{\rm i}\omega \big(\overline{\bf u}_{2}, \,{\bf v}_{1,h}\big)+{\rm i}\omega \omega^{2}_{p}\varepsilon_{0}\big({\bf u}_{1}|_{\Omega_{s}},\,{\bf v}_{2,h}\big)_{s}-{\rm i}\omega\big(\overline{\nabla\times{\bf A}}_{h}, \,{\bf v}_{1,h}\big) }\\[2mm]
{\displaystyle \quad + \,{\rm i}\omega \omega^{2}_{p}\varepsilon_{0}\big((\nabla \varphi_{h})|_{\Omega_{s}},\,{\bf v}_{2,h}\big)_{s}, \qquad \;\;\quad  {\rm for}\,\, {\rm all}\,\,\hat{{\bf v}}_{h} = ({\bf v}_{1,h},{\bf v}_{2,h}) \in X_{h} \times Y_{h,s}, }\\[2mm]
{\displaystyle (\varepsilon K_{h}{\bf u}_{1}, \,{\bm \xi}_{1,h}) + (K_{h}{\bf u}_{2}, \,{\bm \xi}_{2,h}) = 0, \qquad  {\rm for}\,\, {\rm all}\,\,({\bm \xi}_{1,h},{\bm \xi}_{2,h}) \in \nabla S_{h} \times (\nabla \times Q_{h,s}), }
\end{array}
\right.
\end{equation}
where $\varphi_{h}$ and ${\bf A}_{h}$ are the solutions of (\ref{eq:3-11}) and (\ref{eq:3-12}), respectively. Since $\nabla S_{h} \subset X_{h}$ and $\nabla \times Q_{h,s}\subset Y_{h,s}$, we have the discrete inf-sup condition analogous to (\ref{eq:3-16}).

Next we introduce an auxiliary problem of (\ref{eq:3-17}) by replacing ${\varphi}_{h}$ and ${\bf A}_{h}$ with ${\varphi}$ and ${\bf A}$ respectively. Find $K^{\prime}_{h}\hat{{\bf u}} \in X_{h}\times Y_{h,s}$ and $\hat{{\bm \phi}}^{\prime}_{h} = ({\bm \phi}^{\prime}_{1,h}, {\bm \phi}^{\prime}_{2,h})\in \nabla S_{h} \times (\nabla \times Q_{h,s})$ such that 
\begin{equation}\label{eq:3-18}
\left\{
\begin{array}{lll}
{\displaystyle  a_{+}(K^{\prime}_{h}\hat{{\bf u}}, \,\hat{{\bf v}}_{h}) + (\varepsilon {\bf v}_{1,h}, {\bm \phi}^{\prime}_{1,h}) + ({\bf v}_{2,h}, {\bm \phi}^{\prime}_{2,h}) = -2\omega^{2}(\varepsilon{\bf u}_{1}, \,{\bf v}_{1,h}) -2\omega^{2}\big({\bf u}_{2} ,\,{\bf v}_{2,h}\big)_{s} }\\[2mm]
{\displaystyle\quad -\,{\rm i}\omega \big(\overline{\bf u}_{2}, \,{\bf v}_{1,h}\big)+{\rm i}\omega \omega^{2}_{p}\varepsilon_{0}\big({\bf u}_{1}|_{\Omega_{s}},\,{\bf v}_{2,h}\big)_{s}-{\rm i}\omega\big(\overline{\nabla\times{\bf A}}, \,{\bf v}_{1,h}\big) }\\[2mm]
{\displaystyle \quad + \,{\rm i}\omega \omega^{2}_{p}\varepsilon_{0}\big((\nabla \varphi)|_{\Omega_{s}},\,{\bf v}_{2,h}\big)_{s}, \quad  \qquad\; {\rm for}\,\, {\rm all}\,\,\hat{{\bf v}}_{h} = ({\bf v}_{1,h},{\bf v}_{2,h}) \in X_{h} \times Y_{h,s}, }\\[2mm]
{\displaystyle (\varepsilon K^{\prime}_{h}{\bf u}_{1}, \, {\bm \xi}_{1,h}) + (K^{\prime}_{h}{\bf u}_{2}, \, {\bm \xi}_{2,h}) = 0, \quad  {\rm for}\,\, {\rm all}\,\,({\bm \xi}_{1,h},{\bm \xi}_{2,h}) \in \nabla S_{h} \times (\nabla \times Q_{h,s}). }
\end{array}
\right.
\end{equation}
Here $\varphi$ and ${\bf A}$ are the solutions of (\ref{eq:2-18}) and (\ref{eq:2-19}), respectively. By the theory of mixed finite element methods \cite{Boffi,Monk}, we have
\begin{equation}\label{eq:3-19}
\begin{array}{lll}
{\displaystyle \Vert (K-K^{\prime}_{h})\hat{{\bf u}} \Vert_{X(\Omega)\times Y(\Omega_{s})} + \Vert \hat{{\bm \phi}}^{\prime}_{h} - \hat{{\bm \phi}}\Vert_{X(\Omega)\times Y(\Omega_{s})} }\\[2mm]
{\displaystyle \leq C \Big\{ \inf_{\hat{{\bf u}}_{h} \in X_{h}\times Y_{h,s}}  \Vert K\hat{{\bf u}} - \hat{{\bf u}}_{h} \Vert_{X(\Omega)\times Y(\Omega_{s})}  + \inf_{\hat{{\bm \eta}}_{h} \in \nabla S_{h}\times (\nabla \times Q_{h,s})}  \Vert  \hat{{\bm \phi}} - \hat{{\bm \eta}}_{h} \Vert_{X(\Omega)\times Y(\Omega_{s})}  \Big\}.} 
\end{array}
\end{equation}
In fact, since $\hat{{\bm \phi}} \in  \nabla H_{0}^{1}(\Omega) \times (\nabla \times \mathbf{H}_{0}(\mathbf{curl};\Omega_{s}))$, and $\hat{{\bm \phi}}^{\prime}_{h}, \hat{{\bm \eta}}_{h} \in  \nabla S_{h}\times (\nabla \times Q_{h,s}) $, (\ref{eq:3-19}) is equivalent to 
\begin{equation}\label{eq:3-20}
\begin{array}{lll}
{\displaystyle \Vert (K-K^{\prime}_{h})\hat{{\bf u}} \Vert_{X(\Omega)\times Y(\Omega_{s})} + \Vert \hat{{\bm \phi}}^{\prime}_{h} - \hat{{\bm \phi}}\Vert_{{\bf L}^{2}(\Omega)\times {\bf L}^{2}(\Omega_{s})} }\\[2mm]
{\displaystyle \leq C \Big\{ \inf_{\hat{{\bf u}}_{h} \in X_{h}\times Y_{h,s}}  \Vert K\hat{{\bf u}} - \hat{{\bf u}}_{h} \Vert_{X(\Omega)\times Y(\Omega_{s})}  + \inf_{\hat{{\bm \eta}}_{h} \in \nabla S_{h}\times (\nabla \times Q_{h,s})}  \Vert  \hat{{\bm \phi}} - \hat{{\bm \eta}}_{h} \Vert_{{\bf L}^{2}(\Omega)\times {\bf L}^{2}(\Omega_{s})}  \Big\}.} 
\end{array}
\end{equation}
Now we turn to the estimates of $ (K_{h}-K^{\prime}_{h})\hat{{\bf u}}$. To this end, we subtract (\ref{eq:3-18}) from (\ref{eq:3-17}) and take $\hat{{\bf v}}_{h} =(K_{h}-K^{\prime}_{h})\hat{{\bf u}} $ in the equation. Note that $(K_{h}-K^{\prime}_{h})\hat{{\bf u}} \in X_{0,h}\times Y_{0,h,s}$. We obtain
\begin{equation}\label{eq:3-21}
\begin{array}{lll}
{\displaystyle a_{+}\big((K_{h}-K^{\prime}_{h})\hat{{\bf u}}, \, (K_{h}-K^{\prime}_{h})\hat{{\bf u}} \big) \leq C\Big \{  \Vert \nabla \times ({\bf A} - {\bf A}_{h}) \Vert_{{\bf L}^{2}(\Omega_{s})} \Vert (K_{h}-K^{\prime}_{h}){\bf u}_{1} \Vert_{{\bf L}^{2}(\Omega_{s})} }\\[2mm]
{\displaystyle \qquad  \quad +\,  \Vert \nabla ({\varphi} - {\varphi}_{h}) \Vert_{{\bf L}^{2}(\Omega)} \Vert (K_{h}-K^{\prime}_{h}) {\bf u}_{2} \Vert_{{\bf L}^{2}(\Omega_{s})} \Big\} }.
\end{array} 
\end{equation}
Since $\varphi$ and $\varphi_{h}$ satisfy (\ref{eq:2-18}) and (\ref{eq:3-11}) respectively, it is not difficult to see that
\begin{equation}\label{eq:3-22}
\Vert {\varphi} - {\varphi}_{h}  \Vert_{H^{1}(\Omega)} \leq C \inf_{\xi_{h}\in S_{h}} \Vert {\varphi} - {\xi}_{h} \Vert_{H^{1}(\Omega)}.
\end{equation}
By verifying the (discrete) inf-sup condition and the (discrete) coerciveness of (\ref{eq:2-19}) and (\ref{eq:3-12}) (see Chapter~11 of \cite{Boffi}), we can apply the theory of mixed finite element methods to obtain
\begin{equation}\label{eq:3-23-0}
\begin{array}{lll}
{\displaystyle \Vert {\bf A} - {\bf A}_{h} \Vert_{\mathbf{H}(\mathbf{curl};\Omega_{s})} + \Vert q - q_{h} \Vert_{H^{1}(\Omega_{s})} \leq C \Big \{\inf_{{\bf w}_{h} \in Q_{h,s}} \Vert {\bf A} - {\bf w}_{h} \Vert_{\mathbf{H}(\mathbf{curl};\Omega_{s})} }\\[2mm]
{\displaystyle \qquad \qquad \qquad +\,\inf_{v_{h}\in S_{h,s}} \Vert q - v_{h} \Vert_{H^{1}(\Omega_{s})} \Big\}.}
\end{array}
\end{equation}
Taking ${\bf w} = \nabla q$ in (\ref{eq:2-19}), we see that $(\nabla q, \nabla q)_{s}=0$ and thus $q=0$. It follows that 
\begin{equation}\label{eq:3-23}
\begin{array}{lll}
{\displaystyle \Vert {\bf A} - {\bf A}_{h} \Vert_{\mathbf{H}(\mathbf{curl};\Omega_{s})} \leq C \inf_{{\bf w}_{h} \in Q_{h,s}} \Vert {\bf A} - {\bf w}_{h} \Vert_{\mathbf{H}(\mathbf{curl};\Omega_{s})} .}
\end{array}
\end{equation}
Substituting (\ref{eq:3-22}) and (\ref{eq:3-23}) into (\ref{eq:3-21}) and using the coerciveness of $a_{+}$, we come to
\begin{equation}\label{eq:3-24}
\Vert (K_{h}-K^{\prime}_{h})\hat{{\bf u}} \Vert _{{X(\Omega)\times Y(\Omega_{s})} } \leq C \Big \{ \inf_{\xi_{h}\in S_{h}} \Vert {\varphi} - {\xi}_{h} \Vert_{H^{1}(\Omega)} +   \inf_{{\bf w}_{h} \in Q_{h,s}} \Vert {\bf A} - {\bf w}_{h} \Vert_{\mathbf{H}(\mathbf{curl};\Omega_{s})} \Big\}.
\end{equation}
Combing (\ref{eq:3-20}) and (\ref{eq:3-24}), we end up with
\begin{equation}\label{eq:3-25}
\begin{array}{lll}
{\displaystyle \Vert (K-K_{h})\hat{{\bf u}} \Vert_{X(\Omega)\times Y(\Omega_{s})} \leq C \Big \{ \inf_{\xi_{h}\in S_{h}} \Vert {\varphi} - {\xi}_{h} \Vert_{H^{1}(\Omega)} +   \inf_{{\bf w}_{h} \in Q_{h,s}} \Vert {\bf A} - {\bf w}_{h} \Vert_{\mathbf{H}(\mathbf{curl};\Omega_{s})} }\\[2mm]
{\displaystyle  + \,\inf_{\hat{{\bf u}}_{h} \in X_{h}\times Y_{h,s}}  \Vert K\hat{{\bf u}} - \hat{{\bf u}}_{h} \Vert_{X(\Omega)\times Y(\Omega_{s})}  + \inf_{\hat{{\bm \eta}}_{h} \in \nabla S_{h}\times (\nabla \times Q_{h,s})}  \Vert  \hat{{\bm \phi}} - \hat{{\bm \eta}}_{h} \Vert_{{\bf L}^{2}(\Omega)\times {\bf L}^{2}(\Omega_{s})}  \Big\}.} 
\end{array}
\end{equation}
Since $X_{h},\, S_{h}, \,Y_{h,s}$ and $Q_{h,s}$ are dense in $X(\Omega), \,H_{0}^{1}(\Omega),\,Y(\Omega_{s})$ and $\mathbf{H}_{0}(\mathbf{curl};\Omega_{s})$ respectively, we have $\Vert (K-K_{h})\hat{{\bf u}} \Vert_{X(\Omega)\times Y(\Omega_{s})}\rightarrow 0 $ as $h\rightarrow 0 $. \qquad \end{proof}

Next we estimate $\hat{{\bf F}} - \hat{{\bf F}}_{h}$ by a similar argument. 
\begin{lemma}
Let $\hat{{\bf F}}$ and $\hat{{\bf F}}_{h}$ be defined by (\ref{eq:2-25}) and (\ref{eq:3-13}), respectively. We have
\begin{equation}\label{eq:3-26}
\Vert \hat{{\bf F}} -  \hat{{\bf F}}_{h}\Vert_{X(\Omega)\times Y(\Omega_{s})} \leq C
\inf_{\hat{{\bf u}}_{h} \in X_{h}\times Y_{h,s}}  \Vert \hat{{\bf F}} - \hat{{\bf u}}_{h} \Vert_{X(\Omega)\times Y(\Omega_{s})} .
\end{equation}
\end{lemma}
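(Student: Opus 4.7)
The strategy is to mimic the mixed-finite-element argument developed in the proof of Theorem \ref{thm:3-1} for $K$ versus $K_h$, but in the considerably simpler setting where the right-hand side contains no $\varphi$- or ${\bf A}$-terms, so no auxiliary intermediate operator is needed. First I would recast (\ref{eq:2-25}) as a saddle-point problem over the unconstrained spaces $X(\Omega) \times Y(\Omega_s)$. Setting $b(\hat{\bf v}, \hat{\bm \xi}) := (\varepsilon {\bf v}_1, {\bm \xi}_1) + ({\bf v}_2, {\bm \xi}_2)_s$, the reformulation reads: find $\hat{\bf F} \in X(\Omega) \times Y(\Omega_s)$ and $\hat{\bm \phi} \in \nabla H_0^1(\Omega) \times (\nabla \times \mathbf{H}_0(\mathbf{curl};\Omega_s))$ such that $a_+(\hat{\bf F}, \hat{\bf v}) + b(\hat{\bf v}, \hat{\bm \phi}) = \langle {\bf g}, {\bf v}_{1,T}\rangle$ for all $\hat{\bf v} \in X(\Omega) \times Y(\Omega_s)$, together with $b(\hat{\bf F}, \hat{\bm \xi}) = 0$ on the multiplier space. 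The constraint forces $\hat{\bf F} \in X_0(\Omega) \times Y_0(\Omega_s)$; testing the first equation with $\hat{\bf v} = \hat{\bm \phi}$ and using $\nabla \times {\bm \phi}_1 = {\bf 0}$, $\nabla \cdot {\bm \phi}_2 = 0$, ${\bm \phi}_{1,T}|_{\partial \Omega} = {\bf 0}$ (since ${\bm \phi}_1 = \nabla \psi$ with $\psi \in H_0^1(\Omega)$), together with the kernel condition on $\hat{\bf F}$, collapses to $b(\hat{\bm \phi}, \hat{\bm \phi}) = 0$, whence $\hat{\bm \phi} = {\bf 0}$. So the saddle-point problem is equivalent to the original (\ref{eq:2-25}).

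Next I would write down the analogous discrete saddle-point problem with multiplier space $\nabla S_h \times (\nabla \times Q_{h,s})$. The identical calculation (using $\nabla S_h \subset X_h$, $\nabla \times Q_{h,s} \subset Y_{h,s}$, and $\xi_h \in S_h \Rightarrow \xi_h|_{\partial\Omega} = 0$) yields $\hat{\bm \phi}_h = {\bf 0}$, and the resulting mixed problem for $\hat{\bf F}_h$ is equivalent to (\ref{eq:3-13}). To apply the Brezzi theory at both levels, two ingredients must be checked: coercivity of $a_+$ on the relevant kernels, which is Lemma \ref{lem:2-4} (and its evident restriction to the discrete kernel $X_{0,h} \times Y_{0,h,s}$), and the inf-sup condition for $b$, which follows exactly as in (\ref{eq:3-16}) by choosing the test function equal to the multiplier. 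Because $\|\nabla \psi\|_{X(\Omega)} = \|\nabla \psi\|_{{\bf L}^2(\Omega)}$ and $\|\nabla \times {\bf A}\|_{Y(\Omega_s)} = \|\nabla \times {\bf A}\|_{{\bf L}^2(\Omega_s)}$, and likewise for the discrete counterparts, the inf-sup constants are $h$-independent.

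Finally, the standard Brezzi quasi-optimal estimate gives
\begin{equation*}
\|\hat{\bf F} - \hat{\bf F}_h\|_{X(\Omega) \times Y(\Omega_s)} + \|\hat{\bm \phi} - \hat{\bm \phi}_h\|_{X(\Omega) \times Y(\Omega_s)} \leq C \Big\{ \inf_{\hat{\bf u}_h \in X_h \times Y_{h,s}} \|\hat{\bf F} - \hat{\bf u}_h\|_{X(\Omega)\times Y(\Omega_s)} + \inf_{\hat{\bm \eta}_h \in \nabla S_h \times (\nabla \times Q_{h,s})} \|\hat{\bm \phi} - \hat{\bm \eta}_h\|_{X(\Omega)\times Y(\Omega_s)} \Big\}.
\end{equation*}
Since $\hat{\bm \phi} = {\bf 0}$, the choice $\hat{\bm \eta}_h = {\bf 0}$ makes the second infimum vanish, yielding exactly (\ref{eq:3-26}). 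No serious obstacle is anticipated: the heavy lifting---the mixed reformulation, Brezzi verification, and density of the finite element spaces---has already been performed in the $K$-versus-$K_h$ analysis, and the absence of $\varphi$- and ${\bf A}$-driven right-hand sides makes the present estimate strictly easier than Theorem \ref{thm:3-1}.
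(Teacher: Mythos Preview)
Your proposal is correct and follows essentially the same route as the paper: recast (\ref{eq:2-25}) and (\ref{eq:3-13}) as mixed problems with multipliers in $\nabla H_0^1(\Omega)\times(\nabla\times\mathbf{H}_0(\mathbf{curl};\Omega_s))$ and $\nabla S_h\times(\nabla\times Q_{h,s})$, verify coercivity and the inf--sup condition exactly as in (\ref{eq:3-16}), apply the Brezzi quasi-optimality estimate, and then eliminate the multiplier-approximation term by showing $\hat{\bm\phi}={\bf 0}$ via the test choice $\hat{\bf v}=\hat{\bm\phi}$. Your additional observation that $\hat{\bm\phi}_h={\bf 0}$ as well is true but not needed for (\ref{eq:3-26}); the paper simply inserts $\hat{\bm\phi}={\bf 0}$ into the general estimate (\ref{eq:3-28}).
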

\begin{proof}
We rewrite (\ref{eq:2-25}) and (\ref{eq:3-13}) as the mixed problems. Finding $\hat{{\bf F}}\in X(\Omega)\times Y(\Omega_{s})$ and $\hat{{\bm \phi}} = ({\bm \phi}_{1}, {\bm \phi}_{2})\in \nabla H_{0}^{1}(\Omega) \times (\nabla \times \mathbf{H}_{0}(\mathbf{curl};\Omega_{s}))$ such that 
\begin{equation}\label{eq:3-27}
\left\{
\begin{array}{lll}
{\displaystyle  a_{+}(\hat{{\bf F}}, \,\hat{{\bf v}}) + (\varepsilon {\bf v}_{1}, {\bm \phi}_{1}) + ({\bf v}_{2}, {\bm \phi}_{2}) =  \langle {\bf g}, {\bf v}_{1,T}\rangle, \quad  {\rm for}\,\, {\rm all}\,\,\hat{{\bf v}} = ({\bf v}_{1},{\bf v}_{2}) \in X(\Omega) \times Y(\Omega_{s}), }\\[2mm]
{\displaystyle (\varepsilon {\bf F}_{1}, {\bm \xi}_{1}) + ({\bf F}_{2}, {\bm \xi}_{2}) = 0, \quad  \quad {\rm for}\,\, {\rm all}\,\,({\bm \xi}_{1},{\bm \xi}_{2}) \in \nabla H_{0}^{1}(\Omega) \times (\nabla \times \mathbf{H}_{0}(\mathbf{curl};\Omega_{s})), }
\end{array}
\right.
\end{equation}
and finding $\hat{{\bf F}}_{h}\in X_{h}\times Y_{h,s}$ and $\hat{{\bm \phi}}_{h} = ({\bm \phi}_{1,h}, {\bm \phi}_{2,h})\in \nabla S_{h} \times (\nabla \times Q_{h,s})$ such that 
\begin{equation}
\left\{
\begin{array}{lll}
{\displaystyle  a_{+}(\hat{{\bf F}}_{h}, \,\hat{{\bf v}}_{h}) + (\varepsilon {\bf v}_{1,h}, {\bm \phi}_{1,h}) + ({\bf v}_{2,h}, {\bm \phi}_{2,h}) =  \langle {\bf g}, {\bf v}_{1,h,T}\rangle, }\\[2mm]
{\displaystyle  \qquad \qquad\qquad \qquad \qquad  \;\;\qquad \qquad{\rm for}\,\, {\rm all}\,\,\hat{{\bf v}}_{h} = ({\bf v}_{1,h},{\bf v}_{2,h}) \in X_{h}\times Y_{h,s}, }\\[2mm]
{\displaystyle (\varepsilon {\bf F}_{1,h}, {\bm \xi}_{1,h}) + ({\bf F}_{2,h}, {\bm \xi}_{2,h}) = 0, \quad  {\rm for}\,\, {\rm all}\,\,({\bm \xi}_{1,h},{\bm \xi}_{2,h}) \in \nabla S_{h}\times (\nabla \times Q_{h,s}). }
\end{array}
\right.
\end{equation}
Reasoning as before, we get
\begin{equation}\label{eq:3-28}
\begin{array}{lll}
{\displaystyle \Vert \hat{{\bf F}} -  \hat{{\bf F}}_{h}\Vert_{X(\Omega)\times Y(\Omega_{s})} +  \Vert \hat{{\bm \phi}}_{h} - \hat{{\bm \phi}}\Vert_{{\bf L}^{2}(\Omega)\times {\bf L}^{2}(\Omega_{s})} }\\[2mm]
{\displaystyle  \leq C \Big\{\inf_{\hat{{\bf u}}_{h} \in X_{h}\times Y_{h,s}}  \Vert \hat{{\bf F}} - \hat{{\bf u}}_{h} \Vert_{X(\Omega)\times Y(\Omega_{s})} + \inf_{\hat{{\bm \eta}}_{h} \in \nabla S_{h}\times (\nabla \times Q_{h,s})}  \Vert  \hat{{\bm \phi}} - \hat{{\bm \eta}}_{h} \Vert_{{\bf L}^{2}(\Omega)\times {\bf L}^{2}(\Omega_{s})} \Big\}.}
\end{array}
\end{equation}
Taking $\hat{{\bf v}} = \hat{{\bm \phi}}$ in (\ref{eq:3-27}) and using the fact that $\hat{{\bf F}} \in X_{0}(\Omega)\times Y_{0}(\Omega_{s})$, we see that $(\varepsilon {\bm \phi}_{1}, {\bm \phi}_{1}) + ({\bm \phi}_{2}, {\bm \phi}_{2}) =0 $ and thus ${\bm \phi}_{1} = {\bm \phi}_{2} = 0$. Hence (\ref{eq:3-26}) follows from (\ref{eq:3-28}).\quad \end{proof}

\subsection{Collective compactness}In this part, we verify the collective compactness of $\{K_{h} \}$. To begin with, we give the definition of collective compactness.
\begin{definition}
Let $X$ be a Hilbert space and $\mathcal{K}=\{K_{n}:\,\mathcal{X}\rightarrow \mathcal{X}, \;\;n=0,1,2,\cdots\}$ be a set of bounded linear operators.
If for each bounded set $\mathcal{U}\subset \mathcal{X}$, the image set
\begin{equation}
\mathcal{K}(\mathcal{U})=\{ K_{n}u | \;{\rm for}\, {\rm each} \,u \in \mathcal{U},\;{\rm and} \,K_{n}\in \mathcal{K} \}
\end{equation}
is relatively compact, then the set $\mathcal{K}$ is called collectively compact.
\end{definition}

Let $\wedge = \{ h_{n} \}_{n=1}^{\infty}$ be a sequence of decreasing mesh size satisfying that $h_{n}\rightarrow 0 $ as $n\rightarrow \infty$. We need to show that $\{K_{h}\}_{h\in \wedge}$ is a collectively compact set of operators.. To this end, it suffices to show that the finite element spaces $X_{0,h}$ and $Y_{0,h,s}$ have the discrete compactness properties defined as follows.
\begin{definition}
$X_{0,h}$ $(Y_{0,h,s})$ is said to have the discrete compactness property, if for every sequence $\{ {\bf u}_{h} \}_{h\in \wedge}$ satisfying
\begin{itemize}
\item ${\bf u}_{h}\in X_{0,h} \,(Y_{0,h,s})$ for each $h\in \wedge$;
\item there is a constant $C$ independent of ${\bf u}_{h}$ such that $\Vert {\bf u}_{h}\Vert_{X(\Omega)}\leq C$ $(\Vert {\bf u}_{h}\Vert_{Y(\Omega_{s})}\leq C)$,
\end{itemize}
then there exists a subsequence, still denoted $\{{\bf u}_{h}\}$, and a function ${\bf u} \in X_{0}(\Omega)$ $(Y_{0}(\Omega_{s}))$ such that 
\begin{equation}
{\bf u}_{h}\rightarrow {\bf u} \;\, {\rm strongly}\; {\rm in} \;{\bf L}^{2}(\Omega) \,({\bf L}^{2}(\Omega_{s})) \;{\rm as}\; h \rightarrow 0 \;{\rm in}\; \wedge.
\end{equation}
\end{definition}
We have the following result.
\begin{theorem}\label{thm4-0}
If $X_{0,h}$ and $Y_{0,h,s}$ $(h\in \wedge)$ have the discrete compactness properties, then $\{K_{h}: {\bf L}^{2}(\Omega)\times {\bf L}^{2}(\Omega_{s})\rightarrow  {\bf L}^{2}(\Omega)\times {\bf L}^{2}(\Omega_{s}),\;\; h\in \wedge\}$ is a collectively compact set of operators.
\end{theorem}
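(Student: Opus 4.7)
The plan is to follow the standard recipe for collective compactness: combine a uniform-in-$h$ operator bound with the discrete compactness hypothesis to extract a strongly convergent subsequence from any bounded image sequence.

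First I would establish the uniform estimate
\begin{equation*}
\Vert K_h\hat{\mathbf{u}}\Vert_{X(\Omega)\times Y(\Omega_s)} \leq C\,\Vert\hat{\mathbf{u}}\Vert_{\mathbf{L}^2(\Omega)\times\mathbf{L}^2(\Omega_s)}
\end{equation*}
with $C$ independent of $h$. The argument mirrors Theorem~\ref{thm:2-2}. Inspecting the definition of $a_+$ in (\ref{eq:2-15}), its real part dominates $\Vert\mathbf{u}_1\Vert_{\mathbf{H}(\mathbf{curl};\Omega)}^2+\Vert\mathbf{u}_2\Vert_{\mathbf{H}(\mathbf{div};\Omega_s)}^2$ on all of $X(\Omega)\times Y(\Omega_s)$, so coercivity holds on the conforming subspace $X_{0,h}\times Y_{0,h,s}$ with an $h$-independent constant. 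Testing (\ref{eq:3-11}) with $\xi_h=\varphi_h$ yields $\Vert\nabla\varphi_h\Vert_{\mathbf{L}^2(\Omega)}\leq C\Vert\mathbf{u}_2\Vert_{\mathbf{L}^2(\Omega_s)}$, while the discrete mixed problem (\ref{eq:3-12}) (using its uniform discrete inf-sup condition, a standard fact for $\nabla S_{h,s}\subset Q_{h,s}$) gives $\Vert\nabla\times\mathbf{A}_h\Vert_{\mathbf{L}^2(\Omega_s)}\leq C\Vert\mathbf{u}_1\Vert_{\mathbf{L}^2(\Omega)}$. Plugging these back into the right-hand side of (\ref{eq:3-10}) and applying discrete Lax--Milgram yields the desired bound.

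Next, fix a bounded set $\mathcal{U}\subset\mathbf{L}^2(\Omega)\times\mathbf{L}^2(\Omega_s)$, and let $\{K_{h_n}\hat{\mathbf{u}}_n\}$ be an arbitrary sequence with $h_n\in\wedge$ and $\hat{\mathbf{u}}_n\in\mathcal{U}$. Write $K_{h_n}\hat{\mathbf{u}}_n=(\mathbf{w}_{1,n},\mathbf{w}_{2,n})\in X_{0,h_n}\times Y_{0,h_n,s}$. The uniform bound above ensures
\begin{equation*}
\sup_n \bigl(\Vert\mathbf{w}_{1,n}\Vert_{X(\Omega)} + \Vert\mathbf{w}_{2,n}\Vert_{Y(\Omega_s)}\bigr) < \infty .
\end{equation*}
If $\{h_n\}$ has a constant subsequence, we are in a fixed finite-dimensional space and a norm-convergent subsequence exists trivially. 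Otherwise a subsequence satisfies $h_n\to 0$, and the hypothesis on $X_{0,h}$ produces a subsequence with $\mathbf{w}_{1,n}\to\mathbf{w}_1$ strongly in $\mathbf{L}^2(\Omega)$; extracting a further subsequence via the discrete compactness of $Y_{0,h,s}$ gives $\mathbf{w}_{2,n}\to\mathbf{w}_2$ strongly in $\mathbf{L}^2(\Omega_s)$. Therefore $\mathcal{K}(\mathcal{U})=\{K_h\hat{\mathbf{u}}:h\in\wedge,\ \hat{\mathbf{u}}\in\mathcal{U}\}$ is relatively compact in $\mathbf{L}^2(\Omega)\times\mathbf{L}^2(\Omega_s)$, which is the definition of collective compactness.

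The only nontrivial step is the uniform boundedness: one must verify that the constants in the coercivity of $a_+$ on $X_{0,h}\times Y_{0,h,s}$ and, in particular, in the discrete inf-sup condition for the mixed problem (\ref{eq:3-12}), do not depend on $h$. Once those ingredients are in hand, the collective compactness follows immediately from the two discrete compactness properties applied componentwise. No genuine passage to the limit of the operator equation is needed here; that is deferred to the subsequent application of the Kress--Monk convergence theory.
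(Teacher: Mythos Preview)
Your argument is correct and is exactly the approach that the paper invokes: the paper does not spell out a proof but simply points to Theorem~7.14 of \cite{Monk}, whose content is precisely the uniform-bound-plus-discrete-compactness extraction you carried out. Your handling of the auxiliary problems (\ref{eq:3-11})--(\ref{eq:3-12}) to get $h$-independent constants, and the dichotomy between a constant subsequence of $h_n$ and a subsequence with $h_n\to 0$, are the appropriate refinements for the present coupled setting.
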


The proof of this theorem is exactly the same as that of Theorem~7.14 of \cite{Monk} and we refer the reader to it. It remains to prove the discrete compactness properties of $\{X_{0,h}\}_{h\in \wedge}$ and $\{Y_{0,h,s}\}_{h\in \wedge}$. In fact, the discrete compactness property of $\{X_{0,h}\}_{h\in \wedge}$ has been proved in Chapter~4 of \cite{Monk} and we can use the similar trick to prove that for $\{Y_{0,h,s}\}_{h\in \wedge}$. Since the proof is very tedious and lengthy, due to the limitation of space, we omit it here.

Having verified the pointwise convergence and the collective compactness of $\{ K_{h}\}_{h \in \wedge}$, by the theory of collectively compact operators (Theorem~2.51 of \cite{Monk}), we have the ${\bf L}^{2}$ convergence of the solutions of the discrete equation (\ref{eq:3-14}).
\begin{theorem}
Let $\mathcal{T}_{h}$ be a quasi-uniform mesh. For $h\in \wedge$ sufficiently small, the discrete equation (\ref{eq:3-14}) exists a unique solution $\hat{{\bf Z}}_{h} = ({\bf E}_{0,h},\,{\bf J}_{0,h})\in X_{0,h} \times Y_{0,h,s}$ with the following error estimate
\begin{equation}\label{eq:3-29}
\Vert \hat{{\bf Z}}_{h} - \hat{{\bf Z}} \Vert_{{\bf L}^{2}(\Omega)\times {\bf L}^{2}(\Omega_{s})} \leq C \big \{ \Vert \hat{{\bf F}}_{h} - \hat{{\bf F}}  \Vert_{{\bf L}^{2}(\Omega)\times {\bf L}^{2}(\Omega_{s})} + \Vert (K-K_{h})\hat{{\bf Z}} \Vert_{{\bf L}^{2}(\Omega)\times {\bf L}^{2}(\Omega_{s})} \big\},
\end{equation}
where $\hat{{\bf Z}} = ({\bf E}_{0},{\bf J}_{0})$ is the solution of (\ref{eq:2-27}), $\hat{{\bf F}}$ and $\hat{{\bf F}}_{h}$ are the solutions of (\ref{eq:2-25}) and (\ref{eq:3-13}), respectively.
\end{theorem}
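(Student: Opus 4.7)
The plan is to apply the abstract theory of collectively compact operator approximation (Theorem~2.51 of \cite{Monk}) to the pair of operator equations $(I+K)\hat{{\bf Z}}=\hat{{\bf F}}$ and $(I+K_{h})\hat{{\bf Z}}_{h}=\hat{{\bf F}}_{h}$ regarded as equations on the Hilbert space ${\bf L}^{2}(\Omega)\times{\bf L}^{2}(\Omega_{s})$. That abstract result guarantees that, if $K$ is compact with $I+K$ boundedly invertible, $K_{h}\hat{{\bf u}}\to K\hat{{\bf u}}$ pointwise, and $\{K_{h}\}_{h\in\wedge}$ is collectively compact, then for all sufficiently small $h\in\wedge$ the operator $I+K_{h}$ is boundedly invertible with operator norm bounded uniformly in $h$, and the corresponding error estimate of the form (\ref{eq:3-29}) holds.

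First I would verify each hypothesis. Compactness of $K$ is Theorem~\ref{thm:2-2}. Injectivity of $I+K$ follows from the fact that $(I+K)\hat{{\bf Z}}=0$ corresponds to the homogeneous version of the variational problem (\ref{eq:2-4}), which has only the trivial solution by Lemma~\ref{lem:2-1}; the Fredholm alternative then promotes injectivity to bounded invertibility. Pointwise convergence $\Vert(K-K_{h})\hat{{\bf u}}\Vert_{{\bf L}^{2}(\Omega)\times{\bf L}^{2}(\Omega_{s})}\to 0$ is immediate from Theorem~\ref{thm:3-1}, since $\Vert\cdot\Vert_{{\bf L}^{2}\times{\bf L}^{2}}\le\Vert\cdot\Vert_{X\times Y}$. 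Collective compactness of $\{K_{h}\}_{h\in\wedge}$ is exactly Theorem~\ref{thm4-0}.

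With the hypotheses in place, I would subtract the two operator equations to obtain the error identity
\begin{equation*}
(I+K_{h})(\hat{{\bf Z}}_{h}-\hat{{\bf Z}}) \;=\; (\hat{{\bf F}}_{h}-\hat{{\bf F}}) \,+\, (K-K_{h})\hat{{\bf Z}},
\end{equation*}
and then invert $I+K_{h}$, using the uniform bound on $\Vert(I+K_{h})^{-1}\Vert$ supplied by the abstract theorem, to arrive at (\ref{eq:3-29}). Existence and uniqueness of $\hat{{\bf Z}}_{h}$ for small $h$ follow from the same invertibility statement (and are consistent with Lemma~\ref{lem3-1} applied to the equivalent discrete variational problem (\ref{eq:3-1})).

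The main obstacle is not in the concluding manipulation, which is essentially mechanical, but in ensuring that the two key inputs to the abstract theorem really hold in our coupled setting. The delicate points are: the pointwise convergence $K_{h}\to K$ in the \emph{stronger} $X\times Y$ norm (needed because $K_{h}\hat{{\bf u}}$ is defined through auxiliary solutions $\varphi_{h}$ and ${\bf A}_{h}$ of coupled mixed elliptic problems, and because $a_{+}$ is not coercive on all of $X\times Y$ but only on the divergence-free subspace), established already in Theorem~\ref{thm:3-1} via a mixed formulation and the density Lemma~\ref{lem3-3}; and the collective compactness of $\{K_{h}\}_{h\in\wedge}$, which reduces to the separate discrete compactness properties of $X_{0,h}$ (from \cite{Monk}) and of $Y_{0,h,s}$ (by an analogous argument). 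Granted these two ingredients, the proof is a direct transcription of the collectively compact convergence theorem.
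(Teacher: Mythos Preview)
Your proposal is correct and mirrors the paper's approach exactly: the paper simply states that, having verified pointwise convergence (Theorem~\ref{thm:3-1}) and collective compactness (Theorem~\ref{thm4-0}), the result follows directly from Theorem~2.51 of \cite{Monk}. Your additional step of writing out the error identity $(I+K_{h})(\hat{{\bf Z}}_{h}-\hat{{\bf Z}})=(\hat{{\bf F}}_{h}-\hat{{\bf F}})+(K-K_{h})\hat{{\bf Z}}$ and inverting is precisely the mechanism inside that abstract theorem, so there is no substantive difference.
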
 

Now we can prove the convergence of the finite element solution $({\bf E}_{h},{\bf J}_{h})$ to the solution $({\bf E},{\bf J})$ of the continuous problem (\ref{eq:2-4}) in $X(\Omega)\times Y(\Omega_{s})$.
\begin{theorem}
Suppose that $\mathcal{T}_{h}$ is a quasi-uniform mesh and $h\in \wedge$ is sufficiently small. The finite element approximation of the continuous system (\ref{eq:2-4}) given by (\ref{eq:3-1}) has a unique solution $({\bf E}_{h}, {\bf J}_{h}) \in X_{h}\times Y_{h,s}$ with the following error estimate
\begin{equation}\label{eq:3-30}
\begin{array}{lll}
{\displaystyle \Vert {\bf E}-{\bf E}_{h} \Vert_{X(\Omega)} + \Vert {\bf J} - {\bf J}_{h} \Vert_{Y(\Omega_{s})} \leq C \Big\{ \inf_{{\xi}_{h}\in S_{h}} \Vert \xi_{h} - \varphi \Vert_{H^{1}(\Omega)}+ \inf_{{\bf w}_{h}\in Q_{h,s}} \Vert {\bf w}_{h}- \mathbf{A}\Vert_{\mathbf{H}(\mathbf{curl};\Omega_{s})}   }\\[2mm]
{\displaystyle \quad +\,\inf_{\hat{{\bf u}}_{h} \in X_{h}\times Y_{h,s}}  \Vert \hat{{\bf F}} - \hat{{\bf u}}_{h} \Vert_{X(\Omega)\times Y(\Omega_{s})} + \inf_{\hat{{\bf v}}_{h} \in X_{h}\times Y_{h,s}}  \Vert K\hat{{\bf Z}} - \hat{{\bf v}}_{h} \Vert_{X(\Omega)\times Y(\Omega_{s})}    }\\[2mm]
{\displaystyle \quad +\, \inf_{\hat{{\bm \eta}}_{h} \in \nabla S_{h}\times (\nabla \times Q_{h,s})}  \Vert  \hat{{\bm \phi}} - \hat{{\bm \eta}}_{h} \Vert_{{\bf L}^{2}(\Omega)\times {\bf L}^{2}(\Omega_{s})} \Big\}, }
\end{array}
\end{equation}
where $\varphi$ and ${\bf A}$ are the solutions of (\ref{eq:2-18}) and (\ref{eq:2-19}), respectively, and $\big(K\hat{{\bf Z}}, \,\hat{{\bm \phi}}\big)$ is the solution of the mixed problem (\ref{eq:3-15}) with $\hat{\bf u}$ replaced by $\hat{{\bf Z}} = ({\bf E}_{0}, {\bf J}_{0}) \in X_{0}(\Omega) \times Y_{0}(\Omega_{s})$.
\end{theorem}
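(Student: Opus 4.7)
The plan is to combine the discrete Helmholtz decomposition (\ref{eq:3-5}) with the $L^2$-convergence statement (\ref{eq:3-29}) already in hand for the operator equation (\ref{eq:3-14}), and then upgrade from $L^2$ to the full $X(\Omega)\times Y(\Omega_s)$ norm via a Galerkin-perturbation identity. Writing
\[
{\bf E}-{\bf E}_h = ({\bf E}_0-{\bf E}_{0,h})+\nabla(\varphi-\varphi_h),\qquad {\bf J}-{\bf J}_h = ({\bf J}_0-{\bf J}_{0,h})+\nabla\times({\bf A}-{\bf A}_h),
\]
and using $\nabla\times\nabla=0$ and $\nabla\cdot(\nabla\times\cdot)=0$, the $X(\Omega)$ and $Y(\Omega_s)$ norms of the ``potential'' pieces reduce to $\|\nabla(\varphi-\varphi_h)\|_{{\bf L}^2(\Omega)}$ and $\|\nabla\times({\bf A}-{\bf A}_h)\|_{{\bf L}^2(\Omega_s)}$. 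The target estimate then decomposes into (a) controlling $\|\hat{{\bf Z}}-\hat{{\bf Z}}_h\|_{X(\Omega)\times Y(\Omega_s)}$ and (b) controlling the two potential errors.

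For (b), equations (\ref{eq:2-12-0}) and (\ref{eq:3-6}) form a conforming Galerkin pair for the coercive form $(\varepsilon\nabla\cdot,\nabla\cdot)$ on $H_0^1(\Omega)$, so C\'ea's lemma yields (\ref{eq:3-22}), the first infimum of (\ref{eq:3-30}). The pairs $({\bf A},q)$ and $({\bf A}_h,q_h)$ solve the saddle-point problems (\ref{eq:2-13-0}) and (\ref{eq:3-8}); verifying the Brezzi conditions on the discrete kernel (in the style of \cite{Boffi}) gives (\ref{eq:3-23-0}), and choosing ${\bf w}=\nabla q$ in (\ref{eq:2-13-0}) forces $q=0$, collapsing this to (\ref{eq:3-23}) and the second infimum.

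For (a), I subtract (\ref{eq:3-14}) from (\ref{eq:2-27}) to get the key identity
\[
\hat{{\bf Z}}-\hat{{\bf Z}}_h = (\hat{{\bf F}}-\hat{{\bf F}}_h) + (K_h-K)\hat{{\bf Z}} - K_h(\hat{{\bf Z}}-\hat{{\bf Z}}_h).
\]
Taking the $X(\Omega)\times Y(\Omega_s)$ norm and applying the discrete analogue of the smoothing bound (\ref{eq:2-20}), namely $\|K_h\hat{{\bf u}}\|_{X\times Y}\le C\|\hat{{\bf u}}\|_{{\bf L}^2\times{\bf L}^2}$ (proved by the same Lax--Milgram argument on $X_{0,h}\times Y_{0,h,s}$ together with the discrete versions of (\ref{eq:2-23})--(\ref{eq:2-24})), gives
\[
\|\hat{{\bf Z}}-\hat{{\bf Z}}_h\|_{X\times Y}\le C\bigl(\|\hat{{\bf F}}-\hat{{\bf F}}_h\|_{X\times Y}+\|(K-K_h)\hat{{\bf Z}}\|_{X\times Y}+\|\hat{{\bf Z}}-\hat{{\bf Z}}_h\|_{{\bf L}^2\times{\bf L}^2}\bigr).
\]
The last term is absorbed by (\ref{eq:3-29}) (whose right-hand side is itself dominated by the first two). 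The first term is bounded by (\ref{eq:3-26}), producing the third infimum of (\ref{eq:3-30}). For the second term I invoke (\ref{eq:3-25}) with $\hat{{\bf u}}$ replaced by $\hat{{\bf Z}}$; because (\ref{eq:2-18})--(\ref{eq:2-19}) then coincide with (\ref{eq:2-12-0})--(\ref{eq:2-13-0}), the $\varphi$ and ${\bf A}$ appearing there are exactly those of the Helmholtz decomposition, so the $\varphi$- and ${\bf A}$-infima produced by (\ref{eq:3-25}) merge with those of part (b), while the remaining two terms provide the fourth and fifth infima of (\ref{eq:3-30}).

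The main difficulty is not any single analytical step but the bookkeeping: the errors for $\varphi$, ${\bf A}$, and the Lagrange multiplier $\hat{{\bm \phi}}$ enter the bound through two independent routes (the Helmholtz remainder and the $(K-K_h)\hat{{\bf Z}}$ term), and one must verify that they consolidate cleanly into the five infima on the right of (\ref{eq:3-30}) without leaving any residual. The only genuinely new analytical input is the discrete smoothing estimate for $K_h$, which follows from discrete coercivity of $a_+$ on $X_{0,h}\times Y_{0,h,s}$ (Lemma~\ref{lem:2-4} transfers verbatim) combined with stability of the discrete auxiliary problems (\ref{eq:3-11})--(\ref{eq:3-12}); everything else is a triangle-inequality assembly of lemmas already established in Sections~\ref{sec-3} and \ref{sec-4}.
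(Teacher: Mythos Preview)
Your overall strategy matches the paper's proof almost exactly: split via the Helmholtz decompositions, control $\|\hat{{\bf Z}}-\hat{{\bf Z}}_h\|_{X\times Y}$ through the operator identity and the discrete smoothing bound for $K_h$, and then invoke (\ref{eq:3-25}) and (\ref{eq:3-26}). That part is fine.

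There is, however, a genuine slip in part (b). The pairs (\ref{eq:2-12-0})/(\ref{eq:3-6}) and (\ref{eq:2-13-0})/(\ref{eq:3-8}) are \emph{not} conforming Galerkin pairs: the continuous problems carry ${\bf J}_0$ and ${\bf E}_0$ on the right-hand side, while the discrete ones carry ${\bf J}_{0,h}$ and ${\bf E}_{0,h}$. Hence C\'ea's lemma (resp.\ the Brezzi estimate) does not deliver (\ref{eq:3-22}) or (\ref{eq:3-23}) directly, and your claim that ``C\'ea's lemma yields (\ref{eq:3-22}), the first infimum of (\ref{eq:3-30})'' is false as stated. What one actually obtains---and what the paper uses---is a Strang-type bound with a data-perturbation term,
\[
\|\nabla(\varphi-\varphi_h)\|_{{\bf L}^2(\Omega)}\le C\Big(\|{\bf J}_0-{\bf J}_{0,h}\|_{{\bf L}^2(\Omega_s)}+\inf_{\xi_h\in S_h}\|\varphi-\xi_h\|_{H^1(\Omega)}\Big),
\]
and similarly for $\nabla\times({\bf A}-{\bf A}_h)$ with an extra $\|{\bf E}_0-{\bf E}_{0,h}\|_{{\bf L}^2(\Omega)}$; see (\ref{eq:3-32})--(\ref{eq:3-33}). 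These additional terms are then absorbed by your part (a) bound on $\|\hat{{\bf Z}}-\hat{{\bf Z}}_h\|_{X\times Y}$, exactly as in (\ref{eq:3-34}). Once you make this correction, the bookkeeping you describe goes through and the argument coincides with the paper's.
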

\begin{proof}
Recalling the Helmholtz decompositions (\ref{eq:2-10}) and (\ref{eq:3-3}) for $({\bf E}, {\bf J})$ and $({\bf E}_{h}, {\bf J}_{h})$ respectively, we have 
\begin{equation}\label{eq:3-31}
\begin{array}{lll}
{\displaystyle   \Vert {\bf E}-{\bf E}_{h} \Vert_{X(\Omega)} + \Vert {\bf J} - {\bf J}_{h} \Vert_{Y(\Omega_{s})} \leq  \Vert {\bf E}_{0}-{\bf E}_{0,h} \Vert_{X(\Omega)} + \Vert {\bf J}_{0} - {\bf J}_{0,h} \Vert_{Y(\Omega_{s})} }\\[2mm]
{\displaystyle \qquad  + \,\Vert \nabla (\varphi - \varphi_{h})\Vert_{{\bf L}^{2}(\Omega)} +  \Vert \nabla \times  ({\bf A}- {\bf A}_{h})\Vert_{{\bf L}^{2}(\Omega_{s})}}.
\end{array}
\end{equation}
Since $\varphi$ and $\varphi_{h}$ satisfy (\ref{eq:2-12-0}) and (\ref{eq:3-6}), respectively, we see that
\begin{equation}\label{eq:3-32}
\Vert \nabla (\varphi - \varphi_{h})\Vert_{{\bf L}^{2}(\Omega)} \leq \Vert {\bf J}_{0} - {\bf J}_{0,h} \Vert_{{\bf L}^{2}(\Omega_{s})} + C \inf_{{\xi}_{h}\in S_{h}} \Vert \xi_{h} - \varphi \Vert_{H^{1}(\Omega)}.
\end{equation}
Similarly, applying the mixed finite element theory to (\ref{eq:2-13-0}) and (\ref{eq:3-8}), it follows that
\begin{equation}\label{eq:3-33}
 \Vert \nabla \times  ({\bf A}- {\bf A}_{h})\Vert_{{\bf L}^{2}(\Omega_{s})} \leq \Vert {\bf E}_{0} - {\bf E}_{0,h} \Vert_{{\bf L}^{2}(\Omega)} + C \inf_{{\bf w}_{h}\in Q_{h,s}} \Vert {\bf w}_{h}- \mathbf{A}\Vert_{\mathbf{H}(\mathbf{curl};\Omega_{s})}.
\end{equation}
Substituting (\ref{eq:3-32}) and (\ref{eq:3-33}) into (\ref{eq:3-31}), we obtain
\begin{equation}\label{eq:3-34}
\begin{array}{lll}
{\displaystyle   \Vert {\bf E}-{\bf E}_{h} \Vert_{X(\Omega)} + \Vert {\bf J} - {\bf J}_{h} \Vert_{Y(\Omega_{s})} \leq  C\Big \{ \Vert {\bf E}_{0}-{\bf E}_{0,h} \Vert_{X(\Omega)} + \Vert {\bf J}_{0} - {\bf J}_{0,h} \Vert_{Y(\Omega_{s})}  }\\[2mm]
{\displaystyle \quad  + \,\inf_{{\xi}_{h}\in S_{h}} \Vert \xi_{h} - \varphi \Vert_{H^{1}(\Omega)} +  \inf_{{\bf w}_{h}\in Q_{h,s}} \Vert {\bf w}_{h}- \mathbf{A}\Vert_{\mathbf{H}(\mathbf{curl};\Omega_{s})} \Big \} }.
\end{array}
\end{equation}
Recalling the equations (\ref{eq:2-27}) and (\ref{eq:3-14}) for $\hat{{\bf Z}} = ({\bf E}_{0}, {\bf J}_{0}) $ and $\hat{{\bf Z}}_{h} = ({\bf E}_{0,h}, {\bf J}_{0,h}) $, respectively, we get
\begin{equation}\label{eq:3-35}
\begin{array}{lll}
{\displaystyle \Vert {\bf E}_{0}-{\bf E}_{0,h} \Vert_{X(\Omega)} + \Vert {\bf J}_{0} - {\bf J}_{0,h} \Vert_{Y(\Omega_{s})} = \Vert \hat{{\bf Z}} - \hat{{\bf Z}}_{h} \Vert_{X(\Omega)\times Y(\Omega_{s})} }\\[2mm]
{\displaystyle \quad \leq  \Vert (K_{h} -K) \hat{{\bf Z}} \Vert _{X(\Omega)\times Y(\Omega_{s})}+\Vert K_{h} (\hat{{\bf Z}}_{h} - \hat{{\bf Z}}) \Vert _{X(\Omega)\times Y(\Omega_{s})}  + \Vert \hat{{\bf F}} - \hat{{\bf F}}_{h} \Vert_{X(\Omega)\times Y(\Omega_{s})} }\\[2mm]
{\displaystyle \quad  \leq \Vert (K_{h} -K) \hat{{\bf Z}} \Vert _{X(\Omega)\times Y(\Omega_{s})}+C \Vert \hat{{\bf Z}}_{h} - \hat{{\bf Z}} \Vert _{{\bf L}^{2}(\Omega)\times {\bf L}^{2}(\Omega_{s})}  + \Vert \hat{{\bf F}} - \hat{{\bf F}}_{h} \Vert_{X(\Omega)\times Y(\Omega_{s})}, }
\end{array}
\end{equation}
where we have used the uniform continuity of $K_{h}$. Inserting (\ref{eq:3-29}) into (\ref{eq:3-35}), we see that
\begin{equation}\label{eq:3-36}
\begin{array}{lll}
{\displaystyle \Vert {\bf E}_{0}-{\bf E}_{0,h} \Vert_{X(\Omega)} + \Vert {\bf J}_{0} - {\bf J}_{0,h} \Vert_{Y(\Omega_{s})} \leq C\Big \{ \Vert (K_{h} -K) \hat{{\bf Z}} \Vert _{X(\Omega)\times Y(\Omega_{s})} + \Vert \hat{{\bf F}} - \hat{{\bf F}}_{h} \Vert_{X(\Omega)\times Y(\Omega_{s})} \Big \}}.
\end{array}
\end{equation}
Using (\ref{eq:3-25}) and (\ref{eq:3-26}) with $\hat{{\bf u}}$ replaced by $\hat{{\bf Z}} = ({\bf E}_{0}, {\bf J}_{0})$ in Theorem~\ref{thm:3-1}, we further have
\begin{equation}\label{eq:3-37}
\begin{array}{lll}
{\displaystyle \Vert {\bf E}_{0}-{\bf E}_{0,h} \Vert_{X(\Omega)} + \Vert {\bf J}_{0} - {\bf J}_{0,h} \Vert_{Y(\Omega_{s})} \leq C \Big\{ \inf_{{\xi}_{h}\in S_{h}} \Vert \xi_{h} - \varphi \Vert_{H^{1}(\Omega)}+ \inf_{{\bf w}_{h}\in Q_{h,s}} \Vert {\bf w}_{h}- \mathbf{A}\Vert_{\mathbf{H}(\mathbf{curl};\Omega_{s})}   }\\[2mm]
{\displaystyle \quad +\,\inf_{\hat{{\bf u}}_{h} \in X_{h}\times Y_{h,s}}  \Vert \hat{{\bf F}} - \hat{{\bf u}}_{h} \Vert_{X(\Omega)\times Y(\Omega_{s})} + \inf_{\hat{{\bf v}}_{h} \in X_{h}\times Y_{h,s}}  \Vert K\hat{{\bf Z}} - \hat{{\bf v}}_{h} \Vert_{X(\Omega)\times Y(\Omega_{s})}    }\\[2mm]
{\displaystyle \quad +\, \inf_{\hat{{\bm \eta}}_{h} \in \nabla S_{h}\times (\nabla \times Q_{h,s})}  \Vert  \hat{{\bm \phi}} - \hat{{\bm \eta}}_{h} \Vert_{{\bf L}^{2}(\Omega)\times {\bf L}^{2}(\Omega_{s})} \Big\}. }
\end{array}
\end{equation}
Substituting (\ref{eq:3-37}) into (\ref{eq:3-34}), we obtain the desired result (\ref{eq:3-30}) and complete the proof of this theorem. \qquad \end{proof}

\begin{rem}
By the density of the finite element spaces (Lemma~\ref{lem3-3}), we see that $ \Vert {\bf E}-{\bf E}_{h} \Vert_{X(\Omega)} + \Vert {\bf J} - {\bf J}_{h} \Vert_{Y(\Omega_{s})} \rightarrow 0$ as $h\rightarrow 0$. Furthermore, if the solutions $\varphi$, $\mathbf{A}$, $\hat{{\bf F}}$, $K\hat{{\bf Z}}$, and $\hat{{\bm \phi}}$ possess higher regularity, it is possible to obtain an explicit convergence rate for the finite element approximation by using error estimates of interpolation operators.
\end{rem}

\section{Numerical examples}\label{sec-5}

\subsection{Convergence study}
In this section, we perform numerical tests to validate the proposed finite element scheme and confirm our theoretical analysis. To this end, we consider an artificial problem
 \begin{equation*}
\left\{
\begin{array}{@{}l@{}}
{\displaystyle  \nabla \times (\nabla\times {\bf E}) - \omega^{2} {\bf E} - {\rm i}\omega {\bf J} = \mathbf{f}_{1}, \quad {\rm in} \;\; \Omega,}\\[2mm]
 {\displaystyle \omega(\omega+{\rm i}\gamma){\bf J} + \beta^{2} \nabla(\nabla\cdot {\bf J}) - {\rm i}\omega \omega^{2}_{p} {\bf E} = \mathbf{f}_{2}, \quad {\rm in}\;\; \Omega_{s},}\\[2mm]
 {\displaystyle (\nabla\times {\bf E})\times {\bf n} - {\rm i}\omega({\bf n}\times{\bf E})\times{\bf n} = {\bf g},\quad {\rm on}\;\; \partial \Omega,}\\[2mm]
 {\displaystyle {\bf n}\cdot {\bf J} = 0,\quad  {\rm on}\;\; \partial \Omega_{s}.}
\end{array}
\right.
\end{equation*}
We take $\Omega = \Omega_{s} = (0,1)^{3}$. In addition, we set $\omega = \omega_{p} = \gamma = \beta = 1$. The right-hand terms $\mathbf{f}_{i} \,(i=1,2)$ and the boundary condition $\mathbf{g}$ are chosen such that the problem has the following exact solution
\begin{equation*}
\mathbf{E} = \big(e^{-{\rm i}z}, \,   0,\, 0\big), \quad \mathbf{J} = \big(\sin(\pi x),\,\sin(\pi y),\, {\rm i} \sin(\pi z)\big).
\end{equation*}

We solve the problem by the proposed finite element scheme (\ref{eq:3-1}) with linear elements $(r=1)$ and quadratic elements $(r=2)$, respectively. We analyze the convergence of the method on a sequence of successively refined tetrahedral meshes starting from a coarse mesh. Numerical results of the linear element method and the quadratic element method are presented in Table~5.1 and~5.2, respectively.
We observe that the proposed finite element method with linear elements or quadratic elements has an optimal convergence order for the electric field and current density.

\begin{table}[htbp]\label{table:5-1}
\centering
\caption{Convergence results of the linear element method. $N$ is degrees of freedom.}
\begin{tabular}{ccccccc}
  \hline$h$ & $N$& $\|\mathbf{J}-\mathbf{J}_{h}\|_{\mathbf{H}(\mathbf{div})}$ & Order &  $N$ &$\|\mathbf{E}-\mathbf{E}_{h}\|_{\mathbf{H}_{T}(\mathbf{curl})}$ &Order\\
 \hline
  $h   $ & $720$ & $ 3.131$E$-1$ &$- $   &$196$ & $1.143$E$-1$&$-$\\
  $h/2 $ & $5184$ & $ 1.594$E$-1$ &$ 0.974$& $1208$ & $5.612$E$-2$&$1.027$\\
  $h/4 $ & $39168$ & $ 8.004$E$-2$ &$ 0.994$&  $8368$ &$2.776$E$-2$&$1.016$\\
  $h/8 $ & $304128$ & $ 4.007$E$-2$ &$0.998$& $62048$ & $1.381$E$-2$&$1.007$\\
  $h/16 $ & $2396160$ & $  2.004$E$-2$ &$1.000$&  $477376$ &$6.895$E$-3$&$1.002$\\
  \hline
\end{tabular}
\end{table}

\begin{table}[htbp]\label{table:5-2}
\centering
\caption{Convergence results of the quadratic element method. $N$ is degrees of freedom.}
\begin{tabular}{ccccccc}
  \hline$h$ & $N$& $\|\mathbf{J}-\mathbf{J}_{h}\|_{\mathbf{H}(\mathbf{div})}$ & Order &  $N$ &$\|\mathbf{E}-\mathbf{E}_{h}\|_{\mathbf{H}_{T}(\mathbf{curl})}$ &Order\\
 \hline
  $h   $ & $2016$ & $ 5.684$E$-2$ &$- $   &$1308$ & $6.524$E$-3$&$-$\\
  $h/2 $ & $14976$ & $ 1.446$E$-2$ &$ 1.975$& $8808$ & $1.646$E$-3$&$1.987$\\
  $h/4 $ & $115200$ & $ 3.630$E$-3$ &$ 1.994$&  $64272 $ &$4.147$E$-4$&$1.989$\\
  $h/8 $ & $903168$ & $ 9.084$E$-4$ &$1.998$& $490272$ & $1.041$E$-4$&$1.994$\\
  $h/16 $ & $7151616$ & $  2.273$E$-4$ &$1.999$&  $3828288$ &$2.608$E$-5$&$1.997$\\
  \hline
\end{tabular}
\end{table}

\subsection{Scattering of a single metal nanosphere}
In this section, we consider a more physical problem in nanophotonics, i.e., the scattering of a plane wave from a single metal nanosphere in free space. The radius of the nanosphere is 2\,nm and the Silver--M\"{u}ller condition is set on the boundary of a concentric sphere of radius 20\,nm. The nanosphere is irradiated by a plane wave $\mathbf{E}^{inc} = {\rm exp}({\rm i}\omega y) \mathbf{e}_{x}$ propagating in the $y$-direction. Physical parameters for the NHD model are summarized in Table~5.3. The domain is partitioned into 26510080 tetrahedrons and the electric field and current density are approximated by the first order curl- and divergence-conforming elements, respectively.

\begin{table}[htbp]\label{table:5-3}
\centering
\caption{Physical parameters for the NHD model. $\epsilon_{0}$ and $\mu_{0}$ are respectively the electric permittivity and magnetic permeability of free space.}
\begin{tabular}{ccccc}
  \hline$\omega_{p}$ & $\gamma$& $\beta$ & $\epsilon$ &  $\mu$ \\
 \hline
  $8.65\times 10^{15}$ rad/s & $8.65 \times 10^{13}$ rad/s & $8.29\times 10^{5}$ m/s &$\epsilon_{0}$&$\mu_{0}$   \\
  \hline
\end{tabular}
\end{table}

The extinction cross section ($\sigma_{\rm ext}$) measures the total losses of energy from the incident wave due to both absorption and scattering by the scatterer, which is a quantity of interest to physicists. In this example, $\sigma_{\rm ext}$ is defined as 
\begin{equation}
\sigma_{\rm ext} = -\frac{1}{D|\mathbf{E}_{0}|^{2}} \oint_{\mathcal{S}} {\rm Re}\,[\mathbf{E}^{inc}\times \mathbf{H}_{s}^{\ast}+\mathbf{E}_{s}\times (\mathbf{H}^{inc})^{\ast}]\cdot{\bf n} \, dS,
\end{equation}
where $D$ is the diameter of the nanosphere, $\mathcal{S}$ is the surface of the nanosphere, $\mathbf{E}_{s}$ and $\mathbf{H}_{s}$ are the scattered fields satisfying 
\begin{equation*}
\mathbf{E}_{s} = {\bf E}- \mathbf{E}^{inc}, \quad \mathbf{H}_{s} = {\bf H}- \mathbf{H}^{inc}.
\end{equation*}
In Fig~5.1 we plot the extinction cross section $\sigma_{ext}$ at different angular frequencies $\omega$.

In Fig~\ref{Fig:5-2} we display the electric-field and current-density distributions in a section of the nanosphere at the resonant frequency $\omega/\omega_{p} = 1.13$.

\begin{figure}\label{Fig:5-1}
\begin{center}
\includegraphics[width=7.2cm,height=6cm]{./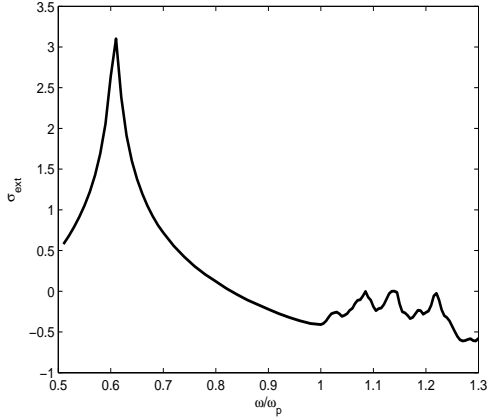}
\caption{Extinction cross section $\sigma_{ext}$ of a nanosphere with a radius of 2\,nm irradiated by a plane wave.}
\end{center}
\end{figure}

\begin{figure}\label{Fig:5-2}
\begin{minipage}{0.48\linewidth}
\centerline{\includegraphics[width=6.2cm, height = 5cm]{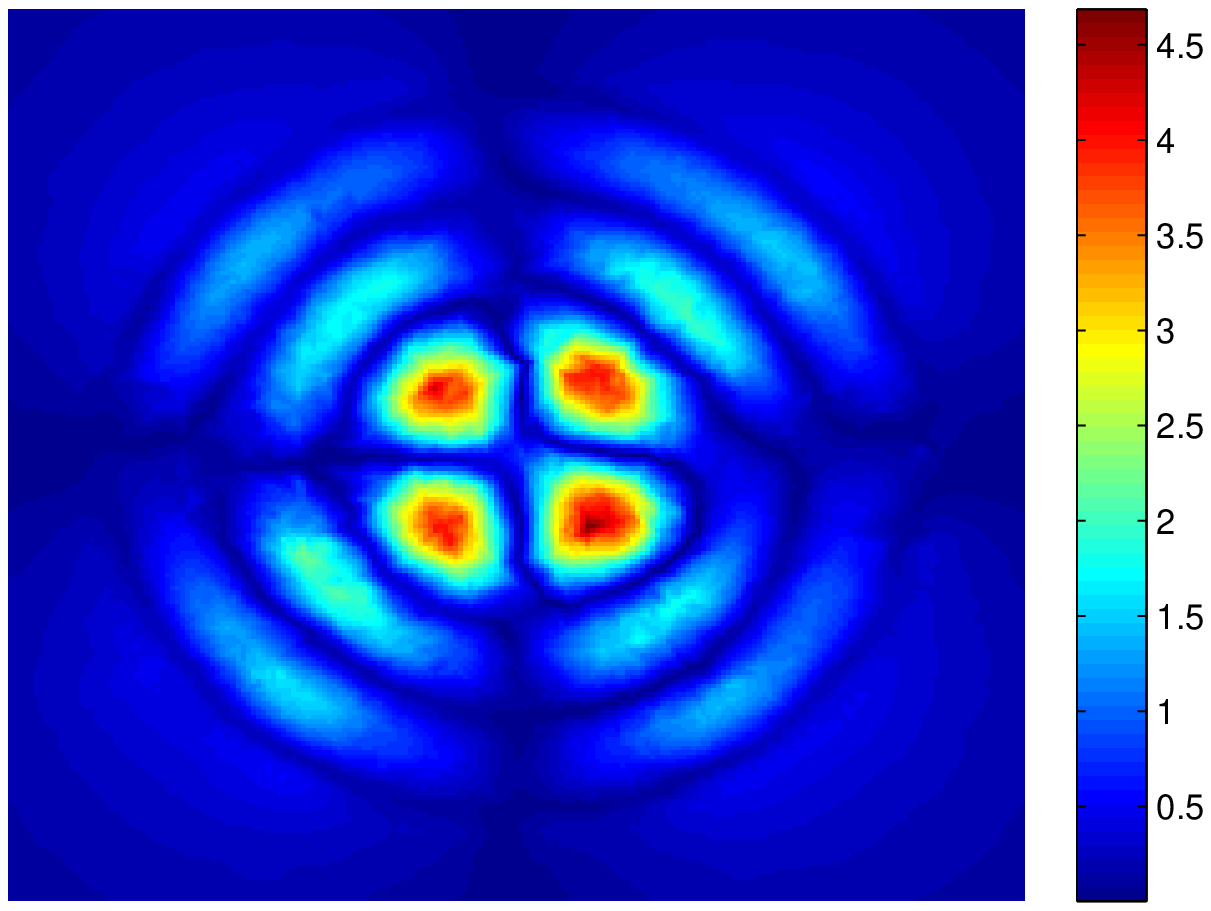}}
\centerline{(a):$\;|\mathbf{E}_{z}|$}
\end{minipage}
\qquad
\begin{minipage}{0.48\linewidth}
\centerline{\includegraphics[width=6.2cm, height = 5cm]{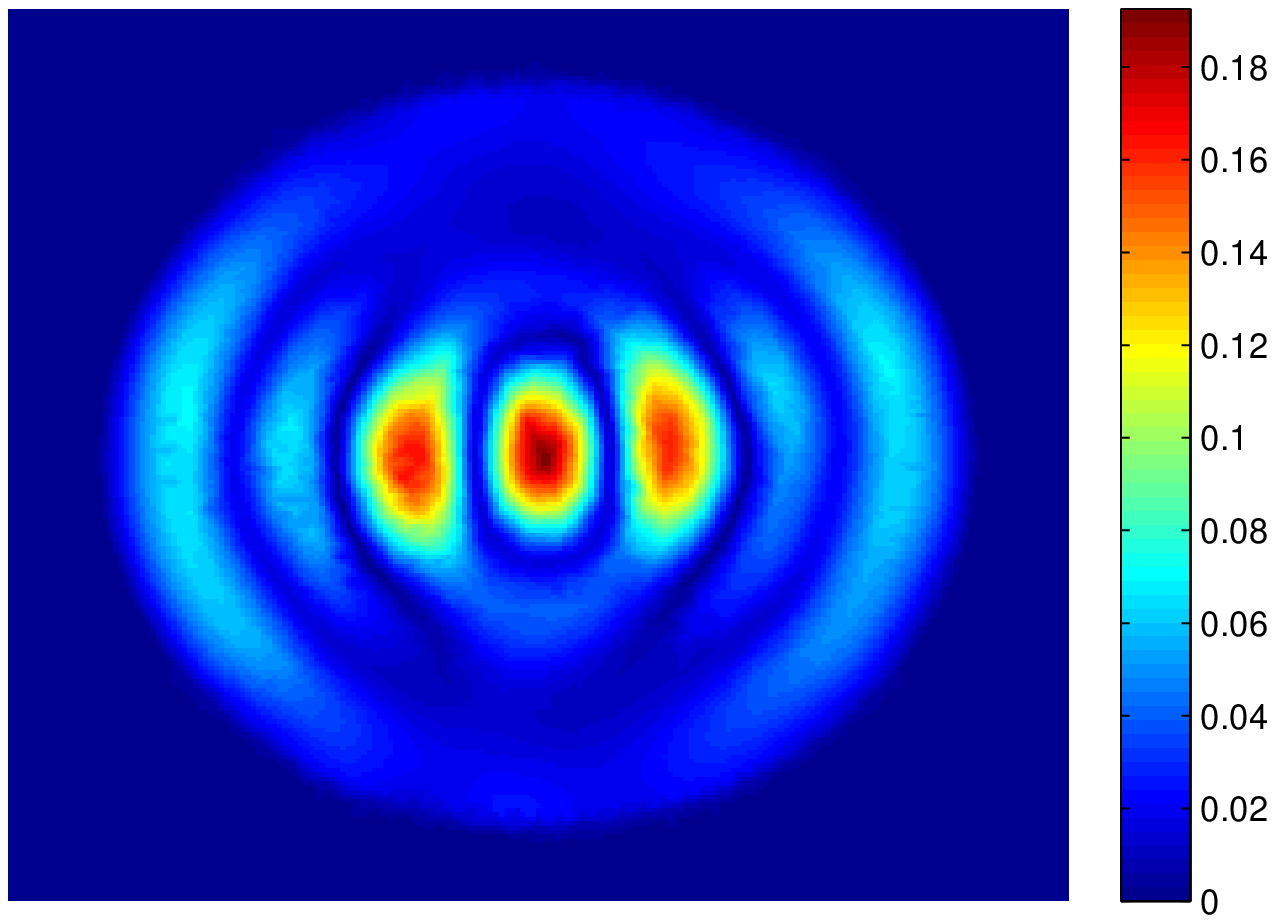}}
\centerline{(b):$\;|\mathbf{J}_{x}|$}
\end{minipage}
\caption{The electric-field and current-density distributions at $\omega/\omega_{p} = 1.13$. }
\end{figure}

\section{Conclusions}
We have given the first mathematical and numerical analysis of the frequency-domain NHD model. The existence and uniqueness of solutions to the weak formulation of the equations are proved. The convergence of a Galerkin finite element scheme is proved by using the theory of collectively compact operators. Numerical tests are presented to validate the finite element scheme and confirm the theoretical analysis. 

Building on the work in this paper, in the near future we plan to investigate the preconditioning of the linear system resulting from the finite element approximation. Since this linear system is indefinite, an efficient preconditioner is essential for solving it by an iterative method. Moreover, developing efficient algorithms for simulating optical properties of periodic metallic nanostructures arrays with the NHD model is another focus of future work.

\end{document}